\theoremstyle{plain}
\newtheorem{theorem}{Theorem}[] 
\newtheorem{conjecture}{Conjecture}[] 
 \newtheorem{corollary}{Corollary}[subsection] 
\newtheorem{lemma}{Lemma}[subsection] 
\newtheorem{proposition}{Proposition}[subsection] 
\theoremstyle{definition}
\newtheorem{defn}{Definition}[subsection] 
\newtheorem{example}{Example}[subsection] 
\theoremstyle{remark}
\newtheorem{remark}{Remark}
\def\R{\mathbb R}
\def\C{\mathbb C}
\def\Z{\mathbb Z}
\def\P{\mathbb P}
\def\K{\boldsymbol{K}}
\def\g{\boldsymbol{\mathfrak{g}}}
\def\k{\boldsymbol{\mathfrak{k}}}
\def\h{\boldsymbol{\mathfrak{h}}}
\def\X{\mathbb{X}}
\def\fg{\mathfrak{g}}
\def\ft{\mathfrak{t}}
\def\fa{\mathfrak{a}}
\def\fk{\mathfrak{k}}
\def\fn{\mathfrak{n}}
\def\fp{\mathfrak{p}}
\def\fh{\mathfrak{h}}
\def\O{\mathcal{O}}
\def\Ox{\mathcal{O}_{\mathbb{X}}}
\def\F{\mathcal{F}}
\begin{document}

\title{The algebraic Mackey-Higson bijections}
 
\author{Eyal Subag\thanks{Department of Mathematics, Penn State University, University Park, PA 16802, USA.} }

 
 
 
\date{}

\maketitle


\begin{abstract}
\noindent 
For a connected semisimple  Lie group $G$ we describe  an explicit collection of  correspondences between the  admissible dual of $G$ and the  admissible dual of the Cartan motion group associated with $G$.  
We  conjecture that each of these correspondences induces  an algebraic isomorphism between the admissible duals. The constructed correspondences are defined in terms of algebraic families  of Harish-Chandra modules. We prove that the conjecture holds  in the case of $SL_2(\R)$, and in that case we give an equivalent  characterization for the bijections.
\end{abstract}

\setcounter{tocdepth}{2}
\tableofcontents


\newcommand\sfrac[2]{{#1/#2}}

\newcommand\cont{\operatorname{cont}}
\newcommand\diff{\operatorname{diff}}


\section{Introduction}
A fundamental question in representation theory of reductive Lie groups is the classification of  $\widehat{G}$, the unitary dual of  $G$. 
In the early 70's George Mackey described in some cases a partially defined one-to-one correspondence between $\widehat{G}$, and  $\widehat{G}_C$ where $G_C$ is the  Cartan motion group associated with $G$. In fact, around the same time  Quillen noticed that such a phenomenon exists for the Lorentz group \cite[p. 10]{Quillen}. Back to Mackey,  he  speculated that in general there should be a bijection between the unitary duals \cite{Mackey75}.   The  idea was to use the easy description of $\widehat{G}_C$,  via  ``the Mackey machine",   along with the 'bijection' to shed new light on $\widehat{G}$.   Mackey was motivated by the fact that $G_C$ can be obtained as a  limit of a family of groups that are isomorphic to $G$. Such a limit was used by the physicists and is known as \textit{contraction} \cite{ Segal51,Inonu-Wigner53,Gilmore05}. In some cases $G$, $G_C$, and their unitary irreducible representations all carry  a physical meaning that suggest some guidance in building the correspondence. About 40 years later  Higson constructed a decent bijection between $\widehat{G}^{\text{temp}}$, the tempered dual of $G$, and that of  $G_C$  in the case of a connected complex semisimple Lie group \cite{Higson08}. Moreover Higson showed how the existence of the bijection  leads to a new proof for  the Baum-Connes conjecture for complex semisimple Lie groups. A few years later he extended the bijection to an algebraic isomorphism  between the admissible duals, again, in the case of a connected complex semisimple Lie group \cite{Higson2011}. Recently, Afgoustidis made a major progress and constructed a bijection between $\widehat{G}^{\text{temp}}$ and $\widehat{G}_C^{\text{temp}}$ in the case of a real connected semisimple Lie group $G$ \cite{Afgoustidis15}.

The purpose of this paper is to study  Mackey-Higson-Afgoustidis type bijections between the admissible duals  $\widehat{G}^{\text{adms}}\longleftrightarrow \widehat{G}_C^{\text{adms}}$  in the case of a  real connected semisimple Lie group $G$. We shall construct a collection of  correspondences between admissible representations of $G$ and admissible representations of $G_C$ using certain algebraic families of Harish-Chandra modules (similar families were introduced in  \cite{Bernstein2016,Ber2017}). We  conjecture that these correspondences are indeed  algebraic isomorphisms between the admissible duals. We prove it in the case of  $SL_2(\R)$. Moreover,  still in the case of $SL_2(\R)$, we  are able to characterize the constructed  bijections.  This suggests  that  a similar characterization  hold in the general case and that our construction is a possible way to define what should be considered as  Mackey-Higson-Afgoustidis type bijections. This puts the known bijections obtained by Higson and Afgoustidis \cite{Higson08,Higson2011,Afgoustidis15} within a larger context where they have a better chance to be  characterized by their properties. Moreover, it suggests an approach for dealing with the non-uniqueness of the Mackey bijection. The bijections constructed  by Higson and Afgoustidis  can be slightly modify to give a family of bijections with similar properties. In our description all of these modifications are equally good.
   
A main theme in our approach is the use of algebraic families. It is interesting to note that when we lift classical notions as infinitesimal character and Harish-Chandra homomorphism to the 'family setup' the result is a new phenomenon in which representation theory of $G$ interacts with the geometry of the families, see Section \ref{se3.3} and Definition \ref{d4}. 
It is our opinion that the suggested perspective that all relevant objects sit on natural algebraic families together with the new interaction between   representation theory and the geometry of the families are novel ideas that should be studied further.

We shall now state  our results more carefully. Let $G$ be a complex semisimple Lie group  with an algebraic involution $\Theta$ and a fixed point subgroup $K$. Let  $\sigma$ be an anti-holomorphic involution of $G$ corresponding to $\Theta$. We assume that  the fixed point subgroup of $\sigma$, $G^{\sigma}$, is a real connected semisimple  group with finite center.  We further assume that $K^{\sigma}$ is a compact real form of $K$. We denote the corresponding complex Lie algebras by $\fg$ and $\fk$ and their real forms by   $ \fg^{\sigma}$ and $\fk^{\sigma}$. Using a variant of the deformation to the normal cone construction, we obtain an algebraic  family of Harish-Chandra pairs  $(\g,\K)$ over the base $\X:=\C\P^1$.  The fibers of $(\g,\K)$ satisfy $$(\g|_z,\K|_z)\simeq \begin{cases}
(\fg,K), & z\neq \infty \\
(\fk\ltimes \fp,K), & z=\infty
\end{cases} $$ where  $\fp$ is the minus one eigenspace of $\Theta$ in $\fg$ and $\fk\ltimes \fp$ is the Lie algebra of the (complex) Cartan motion group, $G_C=K\ltimes \fp$.  As a sheaf of $O_{\X}$-modules $$\g\simeq(\mathcal{O}_{\X}\otimes_{\C}\fk)\oplus  (\mathcal{O}_{\X}(-1)\otimes_{\C}\mathfrak{p}) $$ 
This means that $\g$ naturally decomposes into a constant family $\mathcal{O}_{\X}\otimes_{\C}\fk$ and a non-constant complement $\mathcal{O}_{\X}(-1)\otimes_{\C}\fp$. This is an instance for the interaction of the group structure with  the geometry of the family. 
For any $\Theta$-stable Cartan subalgebra $\fh$ of $\fg$ there is a corresponding Cartan subfamily of $\g$ denoted by $\h$.  Such a Cartan subfamily has a canonical decomposition into a constant and a non-constant subfamilies analogous to the decomposition of $\g$. 
We  define a \textit{generalized Harish-Chandra homomorphism} with respect to $\fh$, $\widetilde{\gamma}_{\fh}:\mathcal{Z}(\g)\longrightarrow \mathcal{U}(\h)$, as a (rational) morphism of $O_{\X}$-algebras from the center of the sheaf of enveloping algebras of $\g$ to the sheaf of universal enveloping algebras of $\h$, see Section \ref{se3.3}. Unlike the classical case, there are non-isomorphic Cartan subfamilies and the generalized Harish-Chandra homomorphism depends on the Cartan subfamily.

There is an obvious notion for  a family  of Harish-Chandra modules  for $(\g,\K)$. We explain what it means for such a family $\F$ to have an \textit{infinitesimal character  with respect to  $\fh$}, see Section \ref{sec4}. It is not the case that every generically irreducible  family of Harish-Chandra modules has an infinitesimal character. 
Another property of an algebraic  family  of Harish-Chandra modules is its  canonical isotypic decomposition with respect to the action of $K$:  $ \mathcal F = \bigoplus _{\mu\in \widehat K}   \mathcal{F}_\mu$. 
For any $\mu\in \widehat{K}$ Vogan attach a certain standard  tempered representation having a real infinitesimal character, $I_{\mu}$, and a $\Theta$-stable Cartan $\fh_{\mu}$, see Section \ref{Vog}. 
  We define $\widetilde{\mathcal{M}}(\g,\K)_{\mu}$ to be the collection of all generically irreducible $(\g,\K)$-modules  that are generated by their $\F_{\mu}$ part,
having an infinitesimal character with respect to $\fh_{\mu}$, and their fiber at $0$  equivalent in the  Grothendieck group of $(\fg,K)$ to  a submodule of  $I_{\mu}$. 
For $\F\in \widetilde{\mathcal{M}}(\g,\K)_{\mu}$ we  let  $J_{\mu,[\alpha:\beta]}(\mathcal{F})$ be the unique composition factor  of $\mathcal{F}|_{[\alpha:\beta]}$  that contains $\mu$. In Section \ref{Cor} we state the following conjecture.\newline

\noindent\textbf{Conjecture 1.}
Fix  $[\alpha:\beta]\in \R\P^1$ with $\alpha \beta \neq 0$. As $\mu$ varies in $\widehat{K}$ and  $\mathcal{F}$ varies in $\widetilde{\mathcal{M}}(\g,\K)_{\mu}$ the correspondence   $J_{\mu,[\alpha:\beta]}(\mathcal{F}) \longleftrightarrow J_{\mu,[0:1]}(\mathcal{F})$ defines a bijection between $\widehat{G^{\sigma}}^{\text{adms}}$ and $\widehat{K^{\sigma}\ltimes \fp^{\sigma}}^{\text{adms}}$ such that:
\begin{enumerate}
\item The bijection is an algebraic isomorphism, namely: For each $\mu\in \widehat{K}$ the bijection restricts to an isomorphism of affine algebraic varieties between $\widehat{G^{\sigma}}_{\mu}^{\text{adms}}$ and $ \widehat{K^{\sigma}\ltimes \fp^{\sigma}}_{\mu}^{\text{adms}}$.
\item The bijection extends Vogan's bijection (Theorem  \ref{th3}) between the tempered dual of $\widehat{G^{\sigma}}$  with real infinitesimal character    to $\widehat{K^{\sigma}}\subset \widehat{K^{\sigma}\ltimes \fp^{\sigma}}$.
\item The bijection maps tempered representations to tempered representations. 
\end{enumerate}

\noindent In addition we suggest a relatively easy way to calculate the various $J_{\mu,[\alpha:\beta]}(\mathcal{F})$ using the Jantzen filtration. The real structure $\sigma$ of $G$ induces a real structure on the family  $(\g,\K)$. For any family of Harish-Chandra modules $\F$ there is  a dual family $\F^{\sigma}$ that is defined with respect to   the real structure and called the $\sigma$-twisted dual , see Section \ref{s5.4}. A non-zero rational intertwining operator between $\F$ and $\F^{\sigma}$ gives rise to a canonical filtration on every $\F|_{[\alpha:\beta]}$. For $\F\in \widetilde{\mathcal{M}}(\g,\K)_{\mu}$ we denote by $\widetilde{J}_{\mu,[\alpha:\beta]}(\mathcal{F})$ the unique Jantzen quotient of $\F|_{[\alpha:\beta]}$ that contains   $\mu$.\newline

\noindent\textbf{Conjecture 2.}
 For any $[\alpha:\beta]\in \R\P^1$ and any $\F\in \widetilde{\mathcal{M}}(\g,\K)_{\mu}$  for which $\F|_{[\alpha:\beta]}$ is reducible,     \hspace{3mm}  $\widetilde{J}_{\mu,[\alpha:\beta]}(\mathcal{F})\simeq J_{\mu,[\alpha:\beta]}(\mathcal{F})$.\\

\noindent In Section \ref{sl2} we consider the case of $SL_2(\R)$ and  prove that\newline

\noindent\textbf{Theorem 5.}
Conjecture \ref{conj} and \ref{conj2} hold for $SL_2(\R)$.\\

\noindent In addition  for $SL_2(\R)$  we prove the following characterization of the constructed correspondences.

\noindent\textbf{Theorem 6.}
Any bijection  between $\widehat{SL_2(\R)}^{\text{adms}}$ and $\widehat{SO(2)\ltimes \R^2}^{\text{adms}}$ satisfying the  three  conditions in Conjecture 1  arises form the correspondence  $J_{\mu,[\alpha:\beta]}(\mathcal{F}) \longleftrightarrow J_{\mu,[0:1]}(\mathcal{F})$ for some $[\alpha:\beta]\in \R\P^1$ with $\alpha \beta \neq 0$.\\

\noindent  The Mackey bijection for the tempered duals in the case of $SL_2(\R)$ was studied before at \cite{George}.  Recently it was reformulated in terms of twisted $\mathcal{D}$-modules on the flag variety of $SL_2(\R)$ \cite{Qijun}. 
Most of the formalism of algebraic groups that we use here was developed in \cite{Bernstein2016,Ber2017}. We shall use Sections \ref{s2} and \ref{sec4} to recall some facts from there. In addition a lot of the calculations that we shall need were done, in some form or another, in  those references. In Section \ref{s3} we define the generalized Harish-Chandra homomorphisms. Section \ref{Cor} is used to state our conjectures and in Section  \ref{sl2} we deal with the case of $SL_2(\R)$.  \newline

The author is grateful to  Nigel Higson for introducing him to the subject of the \textit{Mackey-bijection} and for many useful conversations.  The author would also like to thank Joseph Bernstein for fruitful discussions.     

\section{The deformation  families}\label{s2}
In this section we recall and elaborate on  the construction of the deformation family of Harish-Chandra pairs as given in \cite[sec. 2.1.2]{Bernstein2016}.
That is, starting with a   complex Harish-Chandra pair $(\fg,K)$ we construct  a canonical family of Harish-Chandra  pairs, $(\g,\K)$, over the complex projective line, $\mathbb{X}=\C\P^1$, such that: 
\begin{enumerate}
\item Over $\C\subset \X$ the family is the constant family with fiber $(\fg,K)$.
\item At $\infty$ (the complement of  $\C$ in $\X$) the fiber is the Harish-Chandra pair consisting of the Lie algebra of the Cartan motion group $K\ltimes \fg/\fk$ and the group $K$.
\end{enumerate} 
When $\fg$ is the Lie algebra of a complex semismiple algebraic group $G$, $K$ the fixed point set of $\Theta$ (an algebraic involution of $G$), and $\sigma$ the corresponding real structure on $G$  we show that there is an induced  real structure  on the family of Harish-Chandra pairs. This real structure gives rise to a family of real groups parameterized  by $\X_{\R}:=\R\P^1$ with generic fiber equal to  $G^{\sigma}$  and the fiber at $\infty\in \X_{\R}$  given by the real Cartan motion group $K^{\sigma}\ltimes \fg^{\sigma}/\fk^{\sigma}$.

\subsection{The family of Harish-Chandra pairs}
Let $\fg$  be a complex semisimple Lie algebra with subalgebra $\fk$. Let $\g_{\text{const}}$ be the constant family of  Lie algebras over $\X=\C\P^1$ with fiber $\fg$. Explicitly,  $\g_{\text{const}}$ is the sheaf of (algebraic) sections of the bundle $\X \times \fg\longrightarrow \X$. Thinking of $\X$ as $\C\cup \{\infty\}$ we define  $\g$ to be the smallest  subsheaf of $\g_{\text{const}}$ containing all sections  that their values at infinity lie in $\fk$. Explicitly,  for any open $U\subset \X$ \begin{eqnarray}\nonumber
&& \Gamma(U,\g)=\begin{cases}
\Gamma(U,\g_{\text{const}}), & \infty \notin U\\
\{\xi\in \Gamma(U,\g_{\text{const}})| \xi(\infty)\in \fk\}, & \infty \in U
\end{cases}
\end{eqnarray}   
Picking any vector space complement, $\mathfrak{s}$, to $\fk$ in $\fg$ it follows that $$\g\simeq(\mathcal{O}_{\X}\otimes_{\C}\k)\oplus  (\mathcal{O}_{\X}(-1)\otimes_{\C}\mathfrak{s}) $$  
as sheaves of  $\Ox$-modules. From now on we   shall assume that $\fg$ is the Lie algebra of a complex semisimple algebraic group $G$. We further assume that we are given  an algebriac   involution  $\Theta$ of $G$ (we shall also refer to it as Cartan involution). Where the  fixed point group  of $\Theta$, denoted by $K$, having  $\fk$ as its Lie algebra. This implies that  $(\fg,K)$ is a Harish-Chandra pair. 
We have a canonical choice for a complement to $\fk$, namely the minus one eigenspace of the Cartan decomposition:
$$\mathfrak{g}=\mathfrak{k}\oplus \mathfrak{p}$$  
 The corresponding decomposition of $\g$:  
 $$\g\simeq(\mathcal{O}_{\X}\otimes_{\C}\k)\oplus  (\mathcal{O}_{\X}(-1)\otimes_{\C}\mathfrak{p}) $$  
is decomposition of $K$-equivariant sheaves of  $\Ox$-modules. The family $(\g,\K)$,  with $\K$ being the constant group scheme over $\X$ with fiber $K$,   is an algebraic family of Harish-Chandra pairs in the sense of \cite[sec. 2.3]{Bernstein2016} and its fibers satisfy $$(\g|_z,\K|_z)\simeq \begin{cases}
(\fg,K), & z\neq \infty \\
(\fk\ltimes \fp,K)\simeq(\fk\ltimes \fg/\fk,K) , & z=\infty
\end{cases} $$ 
where $z$ is 'the natural coordinate' on $\X$.    In terms of the standard coordinates $r=\frac{\beta}{\alpha}$ on $\X_0:=\{[\alpha:\beta]\in \X|\alpha\neq 0\}$, and $R=\frac{\alpha}{\beta}$ on $\X_{\infty}:=\{[\alpha:\beta]\in \X|\beta\neq 0\}$ we have
 \begin{eqnarray}\nonumber
 &&\Gamma(\X_0,\g)=\C[r]\otimes_{\C}\fg\\ \nonumber
 &&\Gamma(\X_{\infty},\g)=(\C[R]\otimes_{\C}\fk)\oplus (R\C[R]\otimes_{\C}\fp)
\end{eqnarray}

\subsection{The real structure}  
Recall that for a complex  reductive  group $G$ there is a bijection between conjugacy classes of algebraic involutions and conjugacy classes of  real forms e.g., see \cite[Theorems 3\&4, pp.230--231]{OnishchikVinberg} and \cite{Adams2016}.  Let $\sigma$ be a real form of $G$ that corresponds to $\Theta$. We shall assume that $K^{\sigma}$ is a maximal compact subgroup of $G^{\sigma}$.    By abuse of notation we denote by  $\sigma$ the corresponding real form on $\fg$ .  The morphisms
\begin{eqnarray}\nonumber
&&\sigma_{\X}:\X\longrightarrow \overline{\X}, \hspace{5mm} [\alpha:\beta]\longmapsto [\overline{\alpha}:\overline{\beta}] \\\nonumber
&& \sigma_{\g}:\g\longrightarrow \sigma_{\X}^*\overline{\g}, \hspace{5mm} \xi\longmapsto \sigma(\xi \circ \sigma_{\X}) \\ \nonumber
&&  \sigma_{\K}:\K\longrightarrow \sigma_{\K}^*\overline{\K}, \hspace{5mm} \tau\longmapsto \sigma(\tau \circ \sigma_{\X})
\end{eqnarray} 
Determines a real form of the family $(\g,\K)$ in the sense of   \cite[sec. 2.5]{Bernstein2016}. 
The fixed point set of  $\sigma_{\X}$ is $\X_{\R}=\R\P^1$.
And we obtain  a family of real Harish-Chandra pairs over $\X_{\R}$ denoted by $(\g^{\sigma},\K^{\sigma})$ with fibers given by 
\begin{eqnarray}\label{eq4}
&&(\g^{\sigma}|_x,\K^{\sigma}|_x)\simeq \begin{cases}
(\fg^{\sigma},K^{\sigma}), & x\neq \infty \\
(\fk^{\sigma}\ltimes \fp^{\sigma},K^{\sigma}), & x=\infty
\end{cases} \end{eqnarray}   
Here $\fp^{\sigma}=\fp\cap \fg^{\sigma}$.
By a real family we  mean that we have a collection of real Harish-Chandra pairs parameterized by the topological space $\X_{\R}$ (as a subset of $\X$ equipped with its analytic topology). 
Obviously the family $(\g^{\sigma},\K^{\sigma})$ carries more structure. For example the family $\g^{\sigma}$ is a real vector bundle and the family   $\K^{\sigma}$ is a trivial bundle of Lie groups. We shall  make no  use of this extra structure .

\subsubsection{The family of real groups}
Using the deformation to the normal cone construction \cite[sec 2.6]{Fulton84} one can construct a corresponding family of complex algebraic groups over $\X$. Then using the real structure obtaining a family of Lie group over  $\X_{\R}$. The focus in this paper is on the admissible dual of a real semisimple Lie group and the admissible dual of its Cartan motion group. These duals are defined in terms of the corresponding complex Harish-Chandra pair and in this sense the group play no role.   
 We note that each of the real Harish-Chandra pairs in (\ref{eq4}) determines a unique Lie group up to an isomorphism. By this we obtain a Lie group for each $x\in \R\P^1$.  

\section{The generalized Harish-Chandra homomorphisms}\label{s3}
In this section we  show how the classical Harish-Chandra homomorphism of $\fg$ can be lifted to $\g$. In order to do that we shall explain how certain decompositions in $\fg$ can be lifted to $\g$. In contrast to the classical case, we show that isomorphic Cartan subalgebras of $\fg$  lifted to non-isomorphic  'Cartan subfamilies of $\g$'. In particular, the lifted Harish-Chandra homomorphism arising from a Cartan subalgebra $\fh\subset \fg$ is very much dependent on $\fh$.

\subsection{Subfamilies generated by a set}\label{se3.1}
Any vector $\xi$ of $\fg$ gives rise to a section $1\otimes \xi$ of  $\Gamma(\X_0,\g)$.  For a subset $A$ of  $\mathfrak{g}$  we denote by $S_A$ the subsheaf of $\g$ that is generated  by the  sections $\{ 1\otimes \xi| \xi \in A\}\subset \Gamma(\X_0,\g)$. In other words, $S_A$ is the smallest subsheaf of $\g$  (as a quasi-coherent sheaf of $O_{\X}$-modules) that contains the local sections $\{ 1\otimes \xi| \xi \in A\}$.

\begin{example}
Lat $A=\{\xi\}$ for some $\xi\in \fg$. If $\xi\in \fk$ then $S_{A}$ consists of all sections of $\g_{\text{const}}$ with values in $\C \xi$. If  $\xi\notin \fk$ then $S_{A}$ consists of all sections of $\g_{\text{const}}$ with values in $\C \xi$ that  vanish at  $\infty \in \X$. In particular,   for non zero $\xi$
$$S_{A}\simeq \begin{cases}
 O_{\X},& \xi\in \fk\\
 O_{\X}(-1), &  \xi\notin \fk \end{cases}$$   
\end{example}
\noindent More generally, we have the following  lemma.
\begin{lemma}
Let $V$ be a vector subspace of $\fg$. Then $S_{V}$ consists of all sections of $\g$ with values in
$V$ and such that their value at $\infty$ lie inside $V\cap \fk$. In particular if $l= \operatorname{dim}(V\cap \fk)$ and $m= \operatorname{dim}(V)-l$ then  $$S_{V}\simeq ( \underbrace{O_{\X}\oplus...   \oplus O_{\X}}_{l-\text{times}})\oplus (\underbrace{O_{\X}(-1)\oplus...   \oplus O_{\X}(-1)}_{m-\text{times}}) $$ \qed \label{lem1}
\end{lemma}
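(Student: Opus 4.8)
The plan is to compute $S_V$ by reducing to the rank-one case treated in the Example, but using a basis of $V$ adapted to the flag $V\cap\fk\subseteq V$ rather than to the Cartan decomposition $\fg=\fk\oplus\fp$ (which in general is not compatible with $V$). First note the formal properties of $A\mapsto S_A$: it is monotone, $S_{A\cup B}=S_A+S_B$, and $S_{\C\xi}$ depends only on the line $\C\xi$. Hence, if $e_1,\dots,e_n$ is any basis of $V$, then $S_V=\sum_{i}S_{\C e_i}$ as subsheaves of $\g$. I would choose $k_1,\dots,k_l$ a basis of $V\cap\fk$ and extend it to a basis $k_1,\dots,k_l,w_1,\dots,w_m$ of $V$; then no $w_j$ lies in $\fk$ (else $w_j\in V\cap\fk$, contradicting independence), so the Example gives $S_{\C k_i}\simeq\mathcal{O}_{\X}$ and $S_{\C w_j}\simeq\mathcal{O}_{\X}(-1)$.

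It then remains to show that the sum $\sum_i S_{\C k_i}+\sum_j S_{\C w_j}$ inside $\g$ is direct and to identify its sections. Both can be checked on the standard cover $\X=\X_0\cup\X_\infty$, using $\Gamma(\X_0,\g)=\C[r]\otimes_{\C}\fg$ and $\Gamma(\X_\infty,\g)=(\C[R]\otimes_{\C}\fk)\oplus(R\C[R]\otimes_{\C}\fp)$. Over $\X_0$ each $S_{\C e}$ contributes $\C[r]\otimes_{\C}\C e$ and the directness, as well as the equality of the total with $\C[r]\otimes_{\C}V$ (the sections of $\g|_{\X_0}$ with values in $V$), is immediate. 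Over $\X_\infty$, writing $w_j=a_j+b_j$ with $a_j\in\fk$, $b_j\in\fp$, one finds that $S_{\C k_i}|_{\X_\infty}$ has sections $\C[R]\otimes_{\C}\C k_i$ while $S_{\C w_j}|_{\X_\infty}$ has sections $R\C[R]\otimes_{\C}\C w_j$ --- the scalar in front of $w_j$ must vanish at $\infty$ precisely because its $\fp$-component must. The key non-formal input is that $b_1,\dots,b_m$ are linearly independent in $\fp$, equivalently that the images of $w_1,\dots,w_m$ in $\fg/\fk$ are; this holds because $\{k_i,w_j\}$ is a basis of $V$ while the $k_i$ span $V\cap\fk$. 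Given this, projecting any relation $\sum_i f_i(R)k_i+\sum_j g_j(R)w_j=0$ onto $\fp$ kills the first sum and forces $g_j=0$, hence $f_i=0$; so the sum is direct and $S_V\simeq\mathcal{O}_{\X}^{\oplus l}\oplus\mathcal{O}_{\X}(-1)^{\oplus m}$, with $S_V(\X_\infty)=\big(\C[R]\otimes_{\C}(V\cap\fk)\big)\oplus\big(R\C[R]\otimes_{\C}\operatorname{span}(w_1,\dots,w_m)\big)$.

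Finally I would identify this with the claimed set of sections. A section $f(R)\in\Gamma(\X_\infty,\g)$ with values in $V$ lies in $\C[R]\otimes_{\C}V$, so $f=\sum_i p_i(R)k_i+\sum_j q_j(R)w_j$; its $\fp$-component is $\sum_j q_j(R)b_j$, which vanishes at $\infty$ by the description of $\Gamma(\X_\infty,\g)$, and by independence of the $b_j$ this forces $q_j\in R\C[R]$, i.e. $f\in S_V(\X_\infty)$. Thus $S_V(\X_\infty)$ is exactly the set of sections of $\g$ over $\X_\infty$ with values in $V$, the requirement that the value at $\infty$ lie in $V\cap\fk$ being automatic since any section of $\g$ takes its value at $\infty$ in $\fk$. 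Gluing the two local descriptions over the cover yields the first assertion, and the isomorphism type has already been recorded. (Equivalently, one can phrase the whole argument as the identity $S_V=\g\cap(\mathcal{O}_{\X}\otimes_{\C}V)$ inside $\g_{\mathrm{const}}$, after which both statements are bookkeeping.)

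The step I expect to be the main obstacle is exactly the one flagged above: because $V$ need not be $\Theta$-stable it is not a direct sum of a subspace of $\fk$ and a subspace of $\fp$, so one cannot reduce to the Example using a basis contained in $\fk\cup\fp$, and after passing to a basis adapted to $V\cap\fk\subseteq V$ one must rule out a ``collapse'' of $\sum_j S_{\C w_j}$ near $\infty$. This is precisely the linear independence of the $\fp$-projections $b_j$, i.e. the fact that $m=\dim V-\dim(V\cap\fk)$ is the relevant number rather than $\dim(V\cap\fp)$.
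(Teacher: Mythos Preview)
Your argument is correct. The paper itself offers no proof of this lemma: the statement is followed immediately by a \qed, the author evidently regarding it as a routine consequence of the preceding Example and the explicit description of $\g$ over $\X_0$ and $\X_\infty$. Your write-up supplies exactly the details one would fill in, and you correctly isolate the only point requiring thought---that $V$ need not be $\Theta$-stable, so one must work with a basis adapted to the flag $V\cap\fk\subset V$ and check that the $\fp$-projections of the complementary basis vectors $w_j$ are linearly independent. Your parenthetical reformulation $S_V=\g\cap(\mathcal{O}_{\X}\otimes_{\C}V)$ inside $\g_{\mathrm{const}}$ is the cleanest way to say it and is presumably what the author has in mind.
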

\subsection{Families of Cartan subalgebras and root space decomposition for $\g$ }\label{sec32}
In this section we show how to lift a root space decomposition of $\fg$ to $\g$, or more precisely, to most of $\g$.

From now on we assume that $\fh$ is a  $\Theta$-stable Cartan subalgebra   of a complex semisimple Lie algebra $\fg$, with  $\mathfrak{t}=\mathfrak{h}\cap \mathfrak{k}$  and  $\mathfrak{a}=\mathfrak{h}\cap \mathfrak{p}$. Let  $\mathfrak{g}=\mathfrak{h}\oplus_{\alpha\in \Delta(\mathfrak{h},\mathfrak{g})} \mathfrak{g}_{\alpha}$  be the corresponding root space decomposition. Choosing an order we get the corresponding  positive roots  $\Delta^+(\mathfrak{h},\mathfrak{g})$ and the corresponding  maximal nilpotent subalgebras $\mathfrak{n}=\oplus_{\alpha\in \Delta^+(\mathfrak{h},\mathfrak{g})}\mathfrak{g}_{\alpha}$ and  $\overline{\mathfrak{n}}=\oplus_{-\alpha\in \Delta^+(\mathfrak{h},\mathfrak{g})}\mathfrak{g}_{\alpha}$  and the decomposition $\fg=\overline{\mathfrak{n}}\oplus \fh \oplus \mathfrak{n}$.
The Cartan  subalgebra gives rise to an abelian subfamily of $\g$ namely $\h=S_{\mathfrak{h}}$. We shall call $\h$ the Cartan subfamily of $\g$ corresponding to $\fh$, or simply a Cartan family. As already mentioned in Lemma (\ref{lem1}) 
$$\h=S_{\mathfrak{h}}\simeq (O_{\X}\otimes_{\C}\mathfrak{t})\oplus (O_{\X}(-1)\otimes_{\C}\mathfrak{a})$$ 
and more explicitly,  the  sections over $\X_0$ and $\X_{\infty}$ are given by  $\Gamma(\X_0,\h)=\C[r]\otimes_{\C}\mathfrak{h}$, and  $\Gamma(\X_{\infty},\h)=(\C[R]\otimes_{\C}\mathfrak{t})\oplus (R\C[R]\otimes_{\C}\mathfrak{a})$.
Later on we shall be interested in the dual of $\h$ so it is useful to note now that  
$$\h^* :=\operatorname{Hom}_{O_{\X}}(\h,O_{\X})\simeq  (O_{\X}\otimes_{\C}\mathfrak{t}^*)\oplus (O_{\X}(1)\otimes_{\C}\mathfrak{a}^*)$$  
Similar to the classical case we obtain a corresponding 'root space decomposition' not of $\g$ but of a certain subfamily of it, $\g_{\fh}$. The family $\g_{\fh}$  is defined as  the subfamily of $\g$ consisting of all sections such that  their values at $\infty$ (if $\infty$ is in their domain)  lie inside  $(\mathfrak{k}\cap\mathfrak{h})\oplus_{\alpha\in \Delta(\mathfrak{h},\mathfrak{g})} (\mathfrak{k}\cap\mathfrak{g}_{\alpha})$:
\begin{eqnarray}
&&\g_{\fh}:= S_{\mathfrak{h}}\oplus_{\alpha\in \Delta(\mathfrak{h},\mathfrak{g})} S_{\mathfrak{g}_{\alpha}}
\label{23}
\end{eqnarray}
If $\fh\subset \fk$ then $\g=\g_{\fh}$ in general  $\g\supset \g_{\fh}$. 
We can promote each $\alpha \in \Delta(\mathfrak{h},\mathfrak{g})$ to an element $\widetilde{\alpha}\in \h^*$ by first extending $\alpha$ using extension of  scalars  to be a morphism from $O_{\X}\otimes \fh$ into $O_{\X}$ and then use restriction to $\h$.   Equation (\ref{23}) is indeed a root space decomposition in the following sense:  the family of abelian Lie algebras $\h=S_{\mathfrak{h}}$ is equal to its  centralizer  in $\g_{\fh}$, and 
$$\Gamma(U,S_{\mathfrak{g}_{\alpha}})=\{ \xi\in \Gamma(U,\g_{\fh})| [\eta,\xi]=\widetilde{\alpha}(\eta)\xi, \forall \eta \in \Gamma(U,\h)\}$$ for any open $U$ in $\X$.  

\subsection{The Harish-Chandra homomorphism}\label{se3.3}
Let  $\fh$ be  a $\Theta$-stable Cartan subalgebra of $\fg$  and let $\gamma_{\fh}:\mathcal{Z}(\fg)\longrightarrow \mathcal{U}(\fh)$ be the corresponding normalized\footnote{Including the rho shift with respect to $\fn$.} Harish-Chandra morphism of $\C$-algebras. We shall show that $\gamma_{\fh}$ can be lifted to a rational  morphism of  sheaves of commutative $O_{\X}$-algebras   $\widetilde{\gamma}_{\fh}:\mathcal{Z}(\g)\longrightarrow \mathcal{U}(\h)$ and in some cases prove that it is regular.

By extension of scalars $\gamma_{\fh}$ can be  lifted to a morphism  of  sheaves of commutative $O_{\X}$-algebras
 of the constant families: 
 \begin{eqnarray}\nonumber
 &\widetilde{\gamma}_{\fh}:&O_{\X}\otimes_{\C}\mathcal{Z}(\fg)\longrightarrow O_{\X}\otimes_{\C}\mathcal{U}(\fh)\\ \nonumber
 &&f\otimes \xi \longmapsto f\otimes \gamma_{\fh}(\xi)
 \end{eqnarray}
 The following lemma shows that $\mathcal{Z}(\g)$ is a subsheaf of the center of the constant family $\mathcal{Z}(\g_{\text{const}})=O_{\X}\otimes_{\C}\mathcal{Z}(\fg)$. 
 
\begin{lemma}
 $\mathcal{Z}(\g)= \mathcal{Z}(\g_{\text{const}})\cap \mathcal{U}(\g)$.\label{le}
\end{lemma}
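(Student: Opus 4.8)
The plan is to prove the two inclusions separately, working locally on the two charts $\X_0$ and $\X_\infty$ that cover $\X = \C\P^1$. Over $\X_0$ there is nothing to prove, since $\Gamma(\X_0,\g) = \C[r]\otimes_\C\fg$ coincides with $\Gamma(\X_0,\g_{\text{const}})$, so $\Gamma(\X_0,\mathcal{U}(\g)) = \C[r]\otimes_\C\mathcal{U}(\fg)$ and the center of this algebra is $\C[r]\otimes_\C\mathcal{Z}(\fg)$, which is exactly $\Gamma(\X_0,\mathcal{Z}(\g_{\text{const}}))$. Thus the content of the lemma is entirely concentrated at $\infty$, i.e.\ on the chart $\X_\infty$, and I would reduce the statement to an identity of sections over $\X_\infty$ using the coordinate $R = \alpha/\beta$, where $\Gamma(\X_\infty,\g) = (\C[R]\otimes_\C\fk)\oplus(R\C[R]\otimes_\C\fp)$.

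For the inclusion $\mathcal{Z}(\g) \subseteq \mathcal{Z}(\g_{\text{const}})\cap\mathcal{U}(\g)$: first, $\mathcal{U}(\g) \subseteq \mathcal{U}(\g_{\text{const}})$ is immediate because $\g \subseteq \g_{\text{const}}$ as subsheaves of Lie algebras. It remains to see that a central section of $\mathcal{U}(\g)$ over an open $U$ is also central in $\mathcal{U}(\g_{\text{const}})$ over $U$. This is where I expect the main subtlety: a priori a section $z \in \Gamma(U,\mathcal{Z}(\g))$ commutes only with $\Gamma(U,\g)$, not obviously with all of $\Gamma(U,\g_{\text{const}})$. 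The key observation is that over $\X_0 \cap U$ the two sheaves agree, so $z$ commutes with $\Gamma(\X_0\cap U, \g_{\text{const}}) = \C[r]\otimes_\C\fg$ there; since $\mathcal{Z}(\g_{\text{const}}) = \mathcal{O}_\X\otimes_\C\mathcal{Z}(\fg)$ is a subsheaf of the constant sheaf $\mathcal{U}(\g_{\text{const}})$ and ``being central'' is a closed condition cut out by the vanishing of the (coherent) sheaf map $z \mapsto [z,-]$ against generators of $\g_{\text{const}}$, the fact that $z$ is central on the dense open $\X_0 \cap U$ forces it to be central on all of $U$ — one uses that $\mathcal{U}(\g_{\text{const}})$ is a sheaf of $\mathcal{O}_\X$-algebras on the integral scheme $\X$, and a section that is central over a nonempty open of an irreducible scheme is central everywhere it is defined. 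Concretely near $\infty$: write $z = \sum_k R^k\otimes z_k$ with $z_k \in \mathcal{U}(\fg)$ (this expansion is legitimate since $z$ extends the $\C[r]$-section $\sum_k r^{-k}\otimes z_k$ which must lie in $\C[r]\otimes\mathcal{Z}(\fg)$ after clearing poles); then each $z_k \in \mathcal{Z}(\fg)$, hence $z \in \Gamma(U, \mathcal{O}_\X\otimes_\C\mathcal{Z}(\fg)) = \Gamma(U,\mathcal{Z}(\g_{\text{const}}))$.

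For the reverse inclusion $\mathcal{Z}(\g_{\text{const}})\cap\mathcal{U}(\g) \subseteq \mathcal{Z}(\g)$: let $z$ be a section lying in both. Then $z$ is central in $\mathcal{U}(\g_{\text{const}})$, so in particular it commutes with every section of the subsheaf $\g \subseteq \g_{\text{const}}$; since $z$ also lies in $\mathcal{U}(\g)$, it is by definition a section of $\mathcal{Z}(\g)$. This direction is essentially formal. I would then assemble the local statements: both sheaves in the claimed identity are subsheaves of $\mathcal{U}(\g_{\text{const}})$, and I have shown their sections agree over the cover $\{\X_0, \X_\infty\}$ compatibly with restriction, which gives equality of sheaves. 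The main obstacle, as noted, is the first inclusion at $\infty$ — ensuring that centrality with respect to the ``smaller'' Lie algebra $\Gamma(\X_\infty,\g)$ (which is missing the $\C[R]^{\times}$-multiples of $\fp$, i.e.\ it only contains $R\C[R]\otimes\fp$) already forces centrality with respect to all of $\fg$; the resolution is precisely the density/irreducibility argument, using that the bracket relations propagate from the generic fiber, since $[R\otimes\xi, -] = R[\,1\otimes\xi,-\,]$ for $\xi\in\fp$ and $R$ is a nonzerodivisor in $\C[R]$, so commuting with $R\otimes\fp$ is equivalent to commuting with $1\otimes\fp$.
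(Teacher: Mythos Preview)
Your proposal is correct and follows essentially the same strategy as the paper: both reduce to the chart $\X_\infty$, both note the reverse inclusion is formal, and both establish the nontrivial inclusion by propagating centrality from the dense open $\X_0\cap\X_\infty$ (the paper phrases this as ``continuity of $s$, $\tau$, and the brackets'', which is exactly your density/nonzerodivisor argument on the integral scheme). Your final remark that $[R\otimes\xi,-]=R[1\otimes\xi,-]$ with $R$ a nonzerodivisor is a crisp way to say the same thing.
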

\begin{proof}
Since $\Gamma(\X_0,\g)=\Gamma(\X_0,\g_{\text{const}})$ it is enough to show that  $\Gamma(\X_{\infty},\mathcal{Z}(\g))=\Gamma(\X_{\infty},\mathcal{Z}(\g_{\text{const}})\cap \mathcal{U}(\g))$. Let $s\in  \Gamma(\X_{\infty},\mathcal{Z}(\g))$ then  $s|_{\X_0\cap \X_{\infty}}\in  \Gamma(\X_0\cap \X_{\infty},\mathcal{Z}(\g))$ $=\Gamma(\X_0\cap \X_{\infty},\mathcal{Z}(\g_{\text{const}})\cap \mathcal{U}(\g))$. If $\tau \in  \Gamma(\X_{\infty}, \mathcal{U}(\g_{\text{const}}))$ then over $\X_0\cap \X_{\infty}$ we have $[s|_{\X_0\cap \X_{\infty}},\tau|_{\X_0\cap \X_{\infty}}]=0$ and from the continuity of $s$, $\tau$, and the brackets it follows that $[s,\tau]=0$. Hence $s\in  \Gamma(\X_{\infty}, \mathcal{Z}(\g_{\text{const}})\cap \mathcal{U}(\g))$.  The inclusion $\mathcal{Z}(\g)\supset (\mathcal{Z}(\g_{\text{const}})\cap \mathcal{U}(\g))$ is obvious.
\end{proof} 
 By restriction we obtain a morphism $\widetilde{\gamma}_{\fh}:\mathcal{Z}(\g)\longrightarrow O_{\X}\otimes_{\C}\mathcal{U}(\fh)$. This morphism is a morphism of sheaves of commutative $O_{\X}$-algebras. We can interpret it as a rational morphism $\widetilde{\gamma}_{\fh}:\mathcal{Z}(\g)\longrightarrow \mathcal{U}(\h)$. In section \ref{sl2} we will show that for any  $\Theta$-stable Cartan of  $SL_2(\R)$ the morphism $\widetilde{\gamma}_{\fh}:\mathcal{Z}(\g)\longrightarrow \mathcal{U}(\h)$ is a decent morphism of $O_{\X}$-modules. In the appendix we prove that for the most compact $\Theta$-stable Cartans $\widetilde{\gamma}_{\fh}$ is regular.  At the moment we can not prove that in general  $\widetilde{\gamma}_{\fh}$ is  regular but it seems very likely to be the case.

\begin{remark} 
It should be clear that in general one can not expect  $\widetilde{\gamma}_{\fh}|_{\mathcal{Z}(\g)}$ to be an isomorphism since in general, there might be two $\Theta$-stable Cartans $\fh_1$ and $\fh_2$ with $\h_1$ not isomorphic to $\h_2$.
\end{remark}

\section{Algebraic families of Harish-Chandra  modules}\label{sec4}
In this section we  set notations and recall some properties of algebraic families of Harish-Chandra  modules as they were defined in 
\cite{Bernstein2016}.
 Let $(\boldsymbol{\mathfrak{g}},{\boldsymbol{K}})$ be an algebraic family of   Harish-Chandra pairs over a quasi-projective complex algebraic variety $X$. An \emph{algebraic family of Harish-Chandra modules} for  $(\boldsymbol{\mathfrak{g}},{\boldsymbol{K}})$ is a flat, quasicoherent  $O_X$-module $\mathcal{F}$  that is equipped with an  action of $\boldsymbol{K}$, and an  action  of  $\boldsymbol{\mathfrak{g}}$,
such that  the action morphism
\[
 \boldsymbol{\mathfrak{g}}\otimes_{O_X}\mathcal{F}\longrightarrow \mathcal{F}
\]
 is $\boldsymbol{K}$-equivariant, and such that the differential of the $\boldsymbol{K}$-action on $\F$  is equal to the composition of the embedding of $\operatorname{Lie}(\boldsymbol{K})$ into $\boldsymbol{\mathfrak{g}}$   with the action of   $\boldsymbol{\mathfrak{g}}$ on $\mathcal{F}$. 
Assuming that the  family $\boldsymbol{K}$ is constant and reductive (which is the case considered in this paper),  $\mathcal{F}$  is said to be    \emph{quasi-admissible} if   
$[ \mathcal{L}\otimes_{O_X} \mathcal{F}]^{\boldsymbol{K}}$ is a locally free and finitely generated sheaf of $O_X$-modules  for every  family $\mathcal{L}$ of representations of $\boldsymbol{K}$ that is locally free and   finitely generated  as an $O_X$-module.
In this case, $\F$ is locally free and there is a canonical isotypic decomposition  
   \[
   \mathcal F = \bigoplus _{\tau\in \widehat K}   \mathcal{F}_\tau .
   \]
We shall denote the category of quai-admissible families of  Harish-Chandra modules for $(\g,\K)$ by $\mathcal{M}(\g,\K)$. When $X$ is a curve we say that $\mathcal{F}$ is \textit{generically irreducible} if up to at most a countable number of fibers all its fibers are irreducible.
We say that  $\mathcal{F}$ is  \textit{quasisimple} if the action of $\mathcal{Z}(\g)$, the center of the enveloping algebra of $\g$, on $\mathcal{F}$ factors through a morphism of sheaves of $O_X$-algebras $\chi_{\mathcal{F}}: \mathcal{Z}(\g)\longrightarrow O_X$. That is, the action of any local section $\zeta$ of $\mathcal{Z}(\g)$ is given by multiplication by the regular function $\chi_{\mathcal{F}}(\zeta)$. A  generically irreducible family of Harish-Chandra modules is quasisimple.
In the classical case, thanks to the Harish-Chandra isomorphism, characters of the center of the enveloping algebras are the same as linear functionals of $\fh$ (that are invariant under the action of the Weyl group). 	This means that any irreducible admissible Harish-Chandra module  has an infinitesimal character. In the context of families  this is not true. Below we define when does a family of Harish-Chandra modules has an infinitesimal character with respect to a given $\Theta$-stable Cartan.
We shall keep our assumptions on $\fg,\fk$ and $\Theta$ as before and denote by $(\boldsymbol{\mathfrak{g}},{\boldsymbol{K}})$  the corresponding deformation family of Harish-Chanadr pairs over $\X$.

\begin{defn}
Let $\fh$ be a $\Theta$-stable Cartan subalgebra of $\fg$ and   $\mathcal{F}$  a quasisimple family of Harish-Chandra modules for  $(\boldsymbol{\mathfrak{g}},{\boldsymbol{K}})$. The family $\mathcal{F}$ \textit{has an infinitesimal character (with respect to $\h$)} if there is a morphism of sheaves of $O_{\X}$-algebras  $\psi_{\mathcal{F}}:\mathcal{U}(\h)\longrightarrow O_{\X}$ such that  the diagram
$$\xymatrix{
 \mathcal{Z}(\g)\ar[d]_{ \widetilde{\gamma}_{\fh}}\ar[r]^{\hspace{1mm}\chi_{\mathcal{F}}} &O_{\X} \\
    \mathcal{U}(\h)\ar[ur]_{\psi_{\mathcal{F}}} &} $$
is commutative. \label{d4}
\end{defn}
In Section  \ref{sl2} we shall  see that a family $\mathcal{F}$ might have an infinitesimal character with respect to one $\Theta$-stable Cartan and not with respect  to another. In principal the space $\h^*=\operatorname{Hom}_{O_{\X}}(\h,O_{\X})$ is ``smaller" than $\operatorname{Hom}_{O_{\X}}( \mathcal{Z}(\g),O_{\X})$ so having an infinitesimal character is a non-trivial constrain on $\mathcal{F}$.

\section{The Correspondences between $\widehat{G^{\sigma}}$ and $\widehat{K^{\sigma}\ltimes \fp^{\sigma}}$}\label{Cor}
As before, and through out  this section we shall keep assuming the following set up. $G$ is a  semisimple  complex algebraic group with an algebraic involution $\Theta$ and a corresponding real from $\sigma$ such that:
\begin{enumerate}
\setlength\itemsep{0.01em}
\item $G^{\sigma}$ is a connected semisimple Lie group with finite center.
\item  $K^{\sigma}=G^{\sigma}\cap G^{\theta}$ is a maximal compact subgroup of $G^{\sigma}$.
\item  $K^{\sigma}$ is a compact real form of the reductive group $K:=G^{\theta}$.
\end{enumerate}
  The complex Lie algebras of $G$ and $K$ will be denoted by $\fg$ and $\fk$ respectively and their corresponding real forms by     $\fg^{\sigma}$ and $\fk^{\sigma}$ respectively. The associated Cartan decompositions of $\fg$ and $\fg^{\sigma}$ are 
\begin{eqnarray}\nonumber
&&\mathfrak{g}=\mathfrak{k}\oplus \mathfrak{p}\\ \nonumber
&&\mathfrak{g}^{\sigma}=\mathfrak{k}^{\sigma}\oplus \mathfrak{p}^{\sigma}
\end{eqnarray}
The  set of infinitesimal equivalence classes of irreducible admissible  representations of $G^{\sigma}$ is denoted by  $\widehat{G^{\sigma}}$.
In the first part of this section we recall the algebraic description of  $\widehat{G^{\sigma}}$ by minimal $K$-types due to Vogan. In the following subsection we define classes of algebraic families of Harish-Candra modules for $(\g,
\K)$. These classes are defined in terms of  the algebraic description of $\widehat{G^{\sigma}}$. Then we describe a correspondence  between $\widehat{G^{\sigma}}$ and $\widehat{K^{\sigma}\ltimes \fp^{\sigma}}$ and conjecture that the correspondence is an algebraic bijection.
On the last part of this section we rephrase the correspondence in  terms of   Jantzen filtrations.

\subsection{Vogan's classification}\label{Vog}

In this section we shall briefly  recall the algebraic description of $\widehat{G^{\sigma}}$ which is due to Vogan \cite{Vogan77,Vogan79,Vogan81}. 	Since $G^{\sigma}$ is connected so does $K^{\sigma}$  and the highest weight theory of Cartan and Weyl can be used to describe $\widehat{K^{\sigma}}$, e.g., see \cite[Chapter IV] {Knapp86}. This means that by choosing a maximal torus $T^{\sigma}$ of $K^{\sigma}$ any  $\mu\in \widehat{K^{\sigma}}$ is uniquely defined by  a certain   linear functional (highest weight) of $\ft$,  the comlexification of the Lie algebra of $T^{\sigma}$. Using the killing form of $\fg$ one can define a norm on $\ft^*$ which in turn defines a "norm", $\| \_\|:\widehat{K^{\sigma}}\longrightarrow [0,\infty)$. 
 A \textit{minimal  ${K^{\sigma}}$-type} of $\pi\in \widehat{G^{\sigma}}$ is  $\mu\in \widehat{K^{\sigma}}$ that  appears in  $\pi|_{\widehat{K^{\sigma}}}$ with nonzero multiplicity and minimizes the restriction of $\|\_ \|$ to those  ${K^{\sigma}}$-types that appear in  $\pi|_{\widehat{K^{\sigma}}}$, (see \cite[chapter X]{KnappVogan}).
 For every $\mu \in \widehat{K^{\sigma}}$ we denote by $\widehat{G^{\sigma}}_{\mu}$ the subset   of $\widehat{G^{\sigma}}$ consisting of those equivalence classes having $\mu$ as a minimal  ${K^{\sigma}}$-types. It is known that $G=\bigcup_{\mu \in \widehat{K^{\sigma}}}\widehat{G^{\sigma}}_{\mu}$. 

Let $P$ be a parabolic subgroup  of $G^{\sigma}$ with a Langlands decomposition $P=MAN$ (e.g., see \cite[chapter VII]{Knapp2002}). Denote the linear dual of the complexified Lie algebra of $A$ by $\fa^*$. For  any $\delta \in \widehat{M}$ and   $\nu \in \fa^*$ we denote the parabolically (normalized) induced representation $\operatorname{Ind}_P^{G^{\sigma}}\delta \otimes \nu \otimes 1 $ by $I_P(\delta,\nu)$.   The following  theorem gives  a complete description of $\widehat{G^{\sigma}}$.
\begin{theorem}[Vogan \cite{Vogan79} Theorem 1.1 and 1.2]
Let $\mu \in \widehat{K^{\sigma}}$. There exist  a  cuspidal parabolic subgroup of $G^{\sigma}$, $P_{\mu}=M_{\mu}A_{\mu}N_{\mu}$, and a discrete series representation $\delta_{\mu}$ of $M_{\mu}$ such that:
\begin{enumerate}
\item For any $\nu \in \fa^*$,   $I_{P_{\mu}}(\delta_{\mu},\nu)$ has a unique irreducible subquotient, $J_{P_{\mu}}(\delta_{\mu},\nu)$, that contains $\mu$ (as a minimal $K^{\sigma}$-type).
\item For any  $\pi\in \widehat{G^{\sigma}}_{\mu}$ there exists   $\nu \in \fa^*$  such that $\pi$ is infinitesimally equivalent to $J_{P_{\mu}}(\delta_{\mu},\nu)$.
\item  $J_{P_{\mu}}(\delta_{\mu},\nu)\simeq J_{P_{\mu'}}(\delta'_{\mu'},\nu')$ implies that  $(M_{\mu}A_{\mu}, \delta_{\mu},\otimes \nu)$ is conjugate to $(M_{\mu'}A_{\mu'}, \delta_{\mu'},\otimes \nu')$. Furthermore  assuming $P_{\mu}=P_{\mu'}$ and 
 $\delta_{\mu}=\delta_{\mu'}$ then  $J_{P_{\mu}}(\delta_{\mu},\nu)\simeq J_{P_{\mu}}(\delta_{\mu},\nu')$ if and only if $\nu'$ is obtained from $\nu$ under the action of a certain finite group (for details see \cite{Vogan79}).
\end{enumerate}
\label{th2}
\end{theorem}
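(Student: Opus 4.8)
The plan is to follow Vogan's strategy, taking as black boxes the major structural theorems of the field---Harish-Chandra's classification of the discrete series, Casselman's subrepresentation theorem, and the Knapp-Stein theory of intertwining operators---and concentrating the genuinely new work in the analysis of minimal $K^{\sigma}$-types. The whole theorem rests on a single construction: a canonical assignment $\mu\mapsto(P_{\mu},\delta_{\mu})$, so I would begin there. Using the Vogan norm $\|\cdot\|$ on $\widehat{K^{\sigma}}$ (built from the highest weight together with the shift by $2\rho_c$, half the sum of the compact positive roots), I would first record that every irreducible admissible $\pi$ has only finitely many minimal $K^{\sigma}$-types, each of finite multiplicity, and that such a $\mu$ restricts, on a reductive subgroup, to a \emph{fine} $K$-type. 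Fineness is what cuts the problem down to a smaller group: it singles out a $\Theta$-stable Cartan $\fh_{\mu}$ with split part $\fa_{\mu}$, hence a cuspidal parabolic $P_{\mu}=M_{\mu}A_{\mu}N_{\mu}$, and via the Blattner/Hecht-Schmid formula for the lowest $K$-types of the discrete series it pins down a unique discrete series $\delta_{\mu}$ of $M_{\mu}$ for which $\mu$ is the lowest $K^{\sigma}$-type of $I_{P_{\mu}}(\delta_{\mu},\nu)$.

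For part (1) I would compute the $K^{\sigma}$-type multiplicities of the standard module by the compact Frobenius reciprocity: the multiplicity of $\tau\in\widehat{K^{\sigma}}$ in $I_{P_{\mu}}(\delta_{\mu},\nu)$ equals the multiplicity of $\tau|_{M_{\mu}\cap K^{\sigma}}$ in $\delta_{\mu}|_{M_{\mu}\cap K^{\sigma}}$, independent of $\nu$. The point to extract is that $\mu$ itself occurs with multiplicity exactly one (using that the lowest $(M_{\mu}\cap K^{\sigma})$-type of a discrete series has multiplicity one) and remains minimal throughout the module. Multiplicity one forces $\mu$ into a single composition factor, which I would name $J_{P_{\mu}}(\delta_{\mu},\nu)$; this handles existence and uniqueness of the subquotient containing $\mu$ for every $\nu$.

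For the exhaustion (2), I would take any $\pi\in\widehat{G^{\sigma}}_{\mu}$ and embed it, by Casselman's subrepresentation theorem, into some $I_P(\delta,\nu)$. The minimal $K^{\sigma}$-type $\mu$ of $\pi$ must then already be a minimal $K^{\sigma}$-type of $I_P(\delta,\nu)$, and the fineness analysis of the first paragraph shows the cuspidal data $(P,\delta)$ is forced, up to conjugacy, to be $(P_{\mu},\delta_{\mu})$; hence $\pi\simeq J_{P_{\mu}}(\delta_{\mu},\nu)$ for the corresponding $\nu$. For the rigidity (3), equality of two $J$'s forces equality of infinitesimal characters and of minimal $K^{\sigma}$-types, which by the first step gives conjugacy of $(MA,\delta\otimes\nu)$; and for fixed $(P_{\mu},\delta_{\mu})$ the fibers of $\nu\mapsto J_{P_{\mu}}(\delta_{\mu},\nu)$ are exactly the orbits of the stabilizer of $\delta_{\mu}$ in the little Weyl group $W(\fa_{\mu})$, the finite group of the statement---this comes out of the Knapp-Stein normalization of the intertwining operators together with the multiplicity-one property of $\mu$.

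The main obstacle is the construction in the first paragraph: proving that a minimal $K^{\sigma}$-type is fine and that fineness produces a well-defined cuspidal pair $(P_{\mu},\delta_{\mu})$, with $\mu$ appearing as the lowest $K^{\sigma}$-type of $I_{P_{\mu}}(\delta_{\mu},\nu)$ with multiplicity one. This is precisely where Vogan's machinery lives, and it is substantially harder than the bookkeeping in the later steps. Everything downstream is essentially Frobenius reciprocity plus the standard intertwining-operator calculus once the pair $(P_{\mu},\delta_{\mu})$ is in hand.
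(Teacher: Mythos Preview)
The paper does not give its own proof of this statement: it is quoted verbatim as Vogan's theorem, with a citation to \cite{Vogan79} (Theorems~1.1 and~1.2 there), and is used as a black box throughout the rest of the paper. So there is nothing in the paper to compare your proposal against.

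That said, your outline is a fair summary of the architecture of Vogan's original argument: the construction $\mu\mapsto(P_{\mu},\delta_{\mu})$ via the Vogan norm and the associated $\Theta$-stable Cartan, the multiplicity-one statement for $\mu$ in the induced module via Frobenius reciprocity and the lowest $(M_{\mu}\cap K^{\sigma})$-type of the discrete series, Casselman's subrepresentation theorem for exhaustion, and the Knapp--Stein/$R$-group analysis for the fibers in part~(3). One caution: the word ``fine'' in Vogan's sense is a technical condition on $K$-types relative to a minimal parabolic, and it is not literally true that every minimal $K^{\sigma}$-type of an arbitrary $\pi$ is fine; rather, the $\lambda_a$-construction in \cite{Vogan79} attaches to $\mu$ a $\Theta$-stable parabolic and reduces to a smaller group on which the relevant $K$-type becomes fine. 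If you were to write this out in full you would need to keep that reduction step explicit. But for the purposes of this paper the theorem is simply imported, and no proof is expected here.
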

\noindent Later on we shall  need  the following  related result of Vogan identifying a certain part of $\widehat{G^{\sigma}}$ with $\widehat{K^{\sigma}}$.   
\begin{theorem}[Vogan \cite{Vogan99} Theorem 8.1]For $G^{\sigma}$ a reductive Lie group the map that sends an irreducible tempered Harish-Chandra module with a real infinitesimal character to its (unique) lowest $K^{\sigma}$-type establish a bijection between equivalence classes of  irreducible tempered Harish-Chandra modules with a real infinitesimal character and $\widehat{K^{\sigma}}$.\label{th3}
\end{theorem}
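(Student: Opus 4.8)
Since the statement is Vogan's (\cite[Theorem~8.1]{Vogan99}), the expected ``proof'' is essentially a pointer to that reference; here is how I would organize the argument if I had to reconstruct it. The plan is to combine the Knapp--Zuckerman classification of irreducible tempered Harish-Chandra modules with Vogan's minimal $K^{\sigma}$-type theory already quoted in Theorem~\ref{th2}, running the argument in three steps: (i) identify the irreducible tempered modules with real infinitesimal character as precisely the irreducible summands of the ``$\nu=0$'' induced modules; (ii) control their $K^{\sigma}$-type structure to obtain existence, uniqueness and multiplicity one of the lowest $K^{\sigma}$-type, together with injectivity of the map; (iii) use Theorem~\ref{th2} to obtain surjectivity.

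For step (i) I would start from the fact that every irreducible tempered Harish-Chandra module of $G^{\sigma}$ is a direct summand of a standard tempered module $I_{P}(\delta,\nu)$ with $P=MAN$ a cuspidal parabolic, $\delta$ a (limit of) discrete series of $M$, and $\nu$ purely imaginary. The infinitesimal character of such a module is represented, in the notation of Section~\ref{Vog}, by the pair $(\lambda_{\delta},\nu)\in\fh^{*}$ (for a suitable $\Theta$-stable Cartan $\fh$), where $\lambda_{\delta}$ is the Harish-Chandra parameter of $\delta$; since $\lambda_{\delta}$ is real while $\nu$ is imaginary, the infinitesimal character is real exactly when $\nu=0$ (after, if necessary, replacing $P$ by a smaller cuspidal parabolic so that $\delta$ becomes an honest discrete series). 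Thus the modules to be classified are exactly the irreducible summands of the $I_{P}(\delta,0)$ with $\delta$ a discrete series of $M$.

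For step (ii) I would analyse such an $I_{P}(\delta,0)$ via Frobenius reciprocity in the compact picture: using $G^{\sigma}/P\cong K^{\sigma}/(K^{\sigma}\cap M)$, the multiplicity of $\mu\in\widehat{K^{\sigma}}$ in $I_{P}(\delta,\nu)$ equals the multiplicity of $\delta|_{K^{\sigma}\cap M}$ in $\mu|_{K^{\sigma}\cap M}$, independently of $\nu$. Knapp--Zuckerman then give that the irreducible summands of $I_{P}(\delta,0)$ are in bijection with their lowest $K^{\sigma}$-types, each of which occurs with multiplicity one in the summand carrying it; this yields at once the existence, uniqueness and multiplicity one of the lowest $K^{\sigma}$-type of each such module, and injectivity of the map restricted to the summands of a fixed $I_{P}(\delta,0)$. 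For step (iii), given $\mu\in\widehat{K^{\sigma}}$, Theorem~\ref{th2} supplies a cuspidal datum $(P_{\mu},\delta_{\mu})$ with $\delta_{\mu}$ a discrete series of $M_{\mu}$ and a unique irreducible subquotient $J_{P_{\mu}}(\delta_{\mu},\nu)$ of $I_{P_{\mu}}(\delta_{\mu},\nu)$ containing $\mu$; taking $\nu=0$ produces an irreducible tempered module with real infinitesimal character and lowest $K^{\sigma}$-type $\mu$, giving surjectivity, while part~(3) of Theorem~\ref{th2} shows that $\mu$ determines $(P_{\mu},\delta_{\mu})$ up to conjugacy and hence determines $J_{P_{\mu}}(\delta_{\mu},0)$, completing injectivity across distinct cuspidal data.

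The main obstacle is step (i): making precise and clean the claim that ``real infinitesimal character'' forces $\nu=0$ requires care with limits of discrete series and with the choice of cuspidal parabolic, and it is exactly here that Vogan's deeper results — notably that for these modules the minimal $K^{\sigma}$-type is literally unique, not merely norm-minimal — are doing the real work; without them one only obtains a map defined on ``norm-minimal $K^{\sigma}$-types'', and both well-definedness and injectivity become delicate.
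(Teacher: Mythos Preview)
The paper does not prove this statement at all: it is quoted verbatim as a result of Vogan \cite[Theorem~8.1]{Vogan99} and used as a black box, so the ``paper's own proof'' is simply the citation. You correctly anticipated this in your opening sentence, and your three-step reconstruction (identify tempered modules with real infinitesimal character as summands of $I_P(\delta,0)$, control lowest $K^{\sigma}$-types via Knapp--Zuckerman, and use Theorem~\ref{th2} for surjectivity) is a reasonable outline of the underlying argument and goes well beyond what the paper itself provides.
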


It is useful to note that the inverse of the map in Theorem \ref{th3} is given by $$\mu \longmapsto J_{P_{\mu}}(\delta_{\mu},0)$$

\subsection{Classes of families of modules}

In this section for each $\mu\in \widehat{K^{\sigma}}$ we shall define a class of families  in  $\mathcal{M}(\g,\K)$.  These classes shall be used to construct  collection of correspondences between  $\widehat{G^{\sigma}}_{\mu}$ and $\widehat{K^{\sigma}\ltimes \fp^{\sigma}}_{\mu}$. We then conjecture that  each of  these correspondences induces an algebraic isomorphism between  $\widehat{G^{\sigma}}$ and $\widehat{K^{\sigma}\ltimes \fp^{\sigma}}$. The fundamental  idea is to build families by taking certain curves  of the form $\nu:\C\longrightarrow \fa^*$ lifting them to a family of Harish-Chandra modules for $(\g,\K)|_{\X_0}$ that is related  to  $\{I_{P_{\mu}}(\delta_{\mu},\nu(z))\}_{z\in \C}$ and then  compactify the family to obtain a family over $\X=\C\P^1$.

By Weyl's unitarity trick (e.g., \cite[Proposition 5.7]{Knapp86}) there is no difference between locally finite continuous representations of a compact group and algebraic representations of its complexification. From now on, whenever $K$ is a complex algebraic reductive group with a compact real form $K^{\sigma}$  we shall identify $\widehat{K^{\sigma}}$ with $\widehat{K}$, the set of equivalence classes of irreducible algebraic representations of $K$.  
Fix $\mu\in \widehat{K}$, by Vogan there is a corresponding  cuspidal parabolic subgroup of $G^{\sigma}$ that we denote by  $P_{\mu}$. Following tradition we denote its Langlands decomposition by   $M_{\mu}A_{\mu}N_{\mu}$ . To $P_{\mu}$ one can attach a $\Theta$-stable Cartan subalgebra $\fh_{\mu}$ of $\fg$. 

\begin{defn}
Fix $\mu\in \widehat{K}$. We define $\widetilde{\mathcal{M}}(\g,\K)_{\mu}$ to be the collection of all   $\mathcal{F}\in \mathcal{M}(\g,\K)$ that satisfy the following conditions.
\begin{enumerate}
\item $\F$ is generically irreducible. 
\item $\F$  is generated by $\F_{\mu}$.
\item $\mathcal{F}$ has an infinitesimal character with respect to $\h_{\mu}$.
\item  The collection of  composition factors of $\mathcal{F}|_{[1,0]}$  (with their multiplicities) contained in the collection of composition factors of   $I_{P_{\mu}}(\delta_{\mu},0)$ (with their multiplicities).
\end{enumerate}
\label{def521}\end{defn}
Note that $J_{P_{\mu}}(\delta_{\mu},0)$ can always be embedded in  $\mathcal{F}|_{[1,0]}$.
A family $\F\in \widetilde{\mathcal{M}}(\g,\K)_{\mu}$ should be thought of as a compactification of a subfamily  of $\{I_{P_{\mu}}(\delta_{\mu},\nu)\}_{\nu \in \fa^*}$.

\subsection{The correspondence using unique composition factor in a composition series}
If $\mathcal{F}\in \widetilde{\mathcal{M}}(\g,\K)_{\mu}$  the $K$-types of the fibers $\mathcal{F}|_{[\alpha:\beta]}$ are independent of  $[\alpha:\beta]\in \C\P^1$. Moreover, the multiplicity of $\mu$ is one and hence there is a unique irreducible composition factor in a composition series  of $\mathcal{F}|_{[\alpha:\beta]}$  that contains $\mu$. We shall denote it by $J_{\mu,[\alpha:\beta]}(\mathcal{F})$. \newline

\noindent \textbf{The correspondence:} Fix  a point $[\alpha:\beta]\in \R\P^1$ different from  $"0"=[1:0]$ and $"\infty"=[0:1]$. For any $\mu \in \widehat{K}$
we say that $\pi\in \widehat{G^{\sigma}}_{\mu}$ and $\eta \in \widehat{K^{\sigma}\ltimes \fp^{\sigma}}_{\mu}$  are in correspondence if there exists $\mathcal{F}\in \widetilde{\mathcal{M}}(\g,\K)_{\mu}$ with  $\pi \simeq J_{\mu,[\alpha:\beta]}(\mathcal{F})$ and $\eta \simeq J_{\mu,[1:0]}(\mathcal{F})$. 
\begin{conjecture}
As $\mu$ varies in $\widehat{K}$ and  $\mathcal{F}$ varies in $\widetilde{\mathcal{M}}(\g,\K)_{\mu}$ the correspondence   $J_{\mu,[\alpha:\beta]}(\mathcal{F}) \longleftrightarrow J_{\mu,[0:1]}(\mathcal{F})$ defines a bijection between $\widehat{G^{\sigma}}$ and $\widehat{K^{\sigma}\ltimes \fp^{\sigma}}$ such that:
\begin{enumerate}
\item The bijection is an algebraic isomorphism, namely: For each $\mu\in \widehat{K}$ the bijection restricts to an isomorphism of affine algebraic varieties between $\widehat{G^{\sigma}}_{\mu}$ and $ \widehat{K^{\sigma}\ltimes \fp^{\sigma}}_{\mu}$.
\item The bijection extends Vogan's bijection (Theorem  \ref{th3}) between the tempered dual of $\widehat{G^{\sigma}}$  with real infinitesimal character    to $\widehat{K^{\sigma}}\subset \widehat{K^{\sigma}\ltimes \fp^{\sigma}}$.
\item The bijection maps tempered representations to tempered representations.
\end{enumerate}
\label{conj}
\end{conjecture}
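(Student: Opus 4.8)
The plan is to realize the correspondence $J_{\mu,[\alpha:\beta]}(\mathcal{F})\longleftrightarrow J_{\mu,[0:1]}(\mathcal{F})$ as, on the level of parameters, essentially the identity map between Vogan's parametrization of $\widehat{G^{\sigma}}_{\mu}$ (Theorem \ref{th2}) and the Mackey-machine parametrization of $\widehat{K^{\sigma}\ltimes\fp^{\sigma}}_{\mu}$. Throughout I fix $\mu\in\widehat{K}$ and let $P_{\mu}=M_{\mu}A_{\mu}N_{\mu}$, $\delta_{\mu}$ and $\fh_{\mu}$ be the data of Section \ref{Vog}, with $\ft_{\mu}=\fh_{\mu}\cap\fk$ and $\fa_{\mu}=\fh_{\mu}\cap\fp$, and I write $\fa_{\mu}^{*}$ for the complex linear dual as in Theorem \ref{th2}.

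The first step is to classify the families in $\widetilde{\mathcal{M}}(\g,\K)_{\mu}$ up to isomorphism. Here one unwinds Definition \ref{def521}: since $\h_{\mu}\simeq(\Ox\otimes_{\C}\ft_{\mu})\oplus(\Ox(-1)\otimes_{\C}\fa_{\mu})$, a morphism $\psi_{\mathcal{F}}\colon\mathcal{U}(\h_{\mu})\to\Ox$ as in Definition \ref{d4} amounts to a constant $\ft_{\mu}^{*}$-character together with a degree-one $\fa_{\mu}^{*}$-valued datum; imposing condition (4) (the fiber at $[1:0]$ has the composition factors of $I_{P_{\mu}}(\delta_{\mu},0)$, whose infinitesimal character has vanishing $\fa_{\mu}$-part) forces the $\ft_{\mu}^{*}$-part of $\psi_{\mathcal{F}}$ to be the infinitesimal character of $\delta_{\mu}$ and the $\fa_{\mu}$-part to be a line $r\mapsto r\nu_{0}$ for a $\nu_{0}\in\fa_{\mu}^{*}$ well defined up to the finite group appearing in Theorem \ref{th2}(3). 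Conditions (1)--(3) of Definition \ref{def521} should then pin down $\mathcal{F}|_{\X_0}$ as the subfamily of $\{I_{P_{\mu}}(\delta_{\mu},r\nu_{0})\}_{r}$ generated by its $\mu$-isotypic part; conversely such a family is obtained by performing parabolic induction from $P_{\mu}$ inside the deformation family over $\X_0$ and taking the canonical (minimal) extension across $\infty\in\X$, which one checks again lies in $\widetilde{\mathcal{M}}(\g,\K)_{\mu}$. The conclusion is a bijection between the isomorphism classes in $\widetilde{\mathcal{M}}(\g,\K)_{\mu}$ and $\fa_{\mu}^{*}$ modulo that finite group, via $\mathcal{F}\mapsto\nu_{0}$.

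The second step is to identify the two fibers. Over $\X_0$ the family is the curve of principal series, so by Theorem \ref{th2}(1) the unique composition factor of $\mathcal{F}|_{[\alpha:\beta]}$ (for $\alpha\beta\neq0$) containing $\mu$ is $J_{P_{\mu}}(\delta_{\mu},\tfrac{\beta}{\alpha}\nu_{0})$; hence, as $\mathcal{F}$ varies, $J_{\mu,[\alpha:\beta]}(\mathcal{F})$ ranges over all of $\widehat{G^{\sigma}}_{\mu}$ by Theorem \ref{th2}(2) and hits a prescribed $\pi$ for essentially one $\nu_{0}$ by Theorem \ref{th2}(3). At $[0:1]$ the fiber is a $(\fk\ltimes\fp,K)$-module obtained as the contraction limit of the rescaled principal series, and one shows that its unique composition factor containing $\mu$ is the Mackey representation attached to the $K^{\sigma}$-orbit of $\nu_{0}$ in $(\fp^{\sigma})^{*}$ and the datum on the stabilizer coming from $\delta_{\mu}$, so $J_{\mu,[0:1]}(\mathcal{F})$ ranges over all of $\widehat{K^{\sigma}\ltimes\fp^{\sigma}}_{\mu}$ by the Mackey machine. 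Since both sides are parametrized by $\nu_{0}$ in the same way, the correspondence is a well-defined bijection, and the three listed properties follow. Property (1) holds because both $\widehat{G^{\sigma}}_{\mu}$ and $\widehat{K^{\sigma}\ltimes\fp^{\sigma}}_{\mu}$ are identified, as affine varieties, with $\fa_{\mu}^{*}$ modulo the same finite group, and the correspondence is the identity there up to the invertible scalar $\beta/\alpha$. Property (2) follows by specializing to $\nu_{0}=0$: the $G^{\sigma}$-side is $J_{P_{\mu}}(\delta_{\mu},0)$, the irreducible tempered module with real infinitesimal character and lowest $K^{\sigma}$-type $\mu$ (Theorem \ref{th3} and the remark after it), while the motion-group side is the Mackey representation at the zero orbit, namely $\mu\in\widehat{K^{\sigma}}\subset\widehat{K^{\sigma}\ltimes\fp^{\sigma}}$. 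Property (3) follows by checking that $J_{P_{\mu}}(\delta_{\mu},\nu)$ is tempered exactly when $\nu$ is purely imaginary, which is precisely the locus where the corresponding Mackey representation is tempered.

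The main obstacle is the family-theoretic input behind the first step: that the canonical extension across $\infty$ exists, lies in $\widetilde{\mathcal{M}}(\g,\K)_{\mu}$, and is unique, and that its fiber at $[0:1]$ is exactly the Mackey representation described above. This is the point at which the geometry of the family must be controlled, and it presupposes, among other things, regularity of the generalized Harish-Chandra homomorphism $\widetilde{\gamma}_{\fh_{\mu}}$ of Section \ref{s3} (established only for the most compact $\Theta$-stable Cartans in general, and for all Cartans when $G^{\sigma}=SL_{2}(\R)$), good behaviour of parabolic induction in families, and control of how composition series degenerate at the contraction limit. In the absence of these facts in general, the argument above is a reduction of Conjecture \ref{conj} to them; that reduction is carried out in full — with all the required degenerations computed explicitly — in the case $G^{\sigma}=SL_{2}(\R)$ in Section \ref{sl2}.
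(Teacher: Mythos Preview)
The statement you are attempting to prove is Conjecture~\ref{conj}, and the paper does \emph{not} prove it in general: it is explicitly left as a conjecture, and the only case established is $G^{\sigma}=SL_2(\R)$ (Theorem~5 in Section~\ref{sl2}). So there is no general proof in the paper to compare your proposal against. What you have written is a plausible strategy rather than a proof, and to your credit you say so yourself in the final paragraph: the existence and uniqueness of the extension across $\infty$, the regularity of $\widetilde{\gamma}_{\fh_{\mu}}$ for arbitrary $\Theta$-stable Cartans, the identification of the fiber at $[0:1]$ with the expected Mackey representation, and the matching of the finite groups in Theorem~\ref{th2}(3) with those governing $\widehat{K^{\sigma}\ltimes\fp^{\sigma}}_{\mu}$ are all genuine open points, and any one of them failing would block the argument. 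In particular, your assertion for Property~(3) that $J_{P_{\mu}}(\delta_{\mu},\nu)$ is tempered exactly when $\nu$ is purely imaginary, and that this matches the tempered locus on the motion-group side, is not something one can simply ``check'' in general.

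For $SL_2(\R)$ the paper's proof does instantiate the skeleton you describe, but by direct and explicit computation rather than the abstract reductions you sketch. The classification of $\widetilde{\mathcal{M}}(\g,\K)_{m}$ is done case by case (Propositions~\ref{p632}, \ref{p633}, \ref{p634}) by writing down the Casimir action and solving the constraint $\psi(\gamma_s(\Omega_0))=\chi(\Omega_0)$ concretely; this yields $c(r)=c_2 r^2-1$ (for $|m|\le 1$) or a constant (for $|m|>1$), which is the $SL_2$ incarnation of your ``line $r\mapsto r\nu_0$''. The fibers at finite $R$ and at $R=0$ are then read off from the explicit Jantzen-quotient formula of Theorem~\ref{th4}, and the map $\eta^R_m\colon (z,m)_0\mapsto(\tfrac{z}{R^2}+c_0,m)_R$ is written down and its three properties verified by hand against Tables~\ref{table2} and~\ref{table3}. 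So where you invoke Vogan's parametrization, the Mackey machine, and an abstract matching of finite groups, the paper works entirely with Casimir eigenvalues and $K$-type lists; this is what makes the $SL_2$ argument go through while the general case remains conjectural.
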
 
 \begin{remark}
 It follows from  \cite{Vogan79,Vogan2007} that $\widehat{G^{\sigma}}_{\mu}$   can be  identified with the maximal ideals of a certain finitely generated abelian algebra of regular functions on $\fa_{\mu}^*$ and hence a complex affine algebraic variety. Similarly $ \widehat{K^{\sigma}\ltimes \fp^{\sigma}}_{\mu}$ is an affine algebraic variety. The structure of $\widehat{G^{\sigma}}_{\mu}$ and $ \widehat{K^{\sigma}\ltimes \fp^{\sigma}}_{\mu}$ as  affine algebraic varieties for complex semisimple  $G^{\sigma}$  is described in   \cite{Higson2011}.\label{r2}
  \end{remark}
  
Note that for any  $[\alpha:\beta]\in \R\P^1$ with $\alpha\beta\neq 0$ we have constructed a correspondence.  In the next section we shall see that for $SL_2(\R)$ any bijection between the admissible duals satisfying the properties listed in conjecture \ref{conj}  arises as one of  the constructed correspondences . Hence  for the case of $SL_2(\R)$ we characterize the constructed  collection of  bijections by the above mentioned three properties. This  suggest that the constructed correspondences could be characterized in the general case.

\subsection{The Jantzen filtration and the correspondence}\label{s5.4}
In this section we reformulate an additional   conjecture on how to calculate the various $J_{\mu,[\alpha:\beta]}(\mathcal{F})$ in term of  the Jantzen filtration. Recall that if $\sigma$ is a real structure of an algebraic family of Harish-Chandra pairs, $(\g,\K)$, then for any $\F\in \mathcal{M}(\g,\K)$ the $\sigma$-twisted dual $\F^{\sigma}$ is also in $\mathcal{M}(\g,\K)$  see \cite[sec. 2.4]{Ber2017}.
Let $\F\in \widetilde{\mathcal{M}}(\g,\K)_{\mu}$.  For any  nonzero  rational intertwining operator $\varphi$ from  $\F$  to $\F^{\sigma}$ and any $[\alpha:\beta]\in  \R\P^1$ there is a corresponding decreasing  filtration  $\{\F|_{[\alpha:\beta]}^n\}_{n\in \Z}$ of $F|_{[\alpha:\beta]}$ that is  independent of $\varphi$ (up to a shift of the filtration parameter) see \cite[sec. 4.1]{Ber2017}. 
We shall denote by $\widetilde{J}_{\mu,[\alpha:\beta]}(\mathcal{F})$ the unique Jantzen quotient of $F|_{[\alpha:\beta]}$ containing  $\mu$. For some $\F$ there are no nonzero rational intertwining operators from $\F$ into $\F^{\sigma}$.  For example in the case of $SL_2(\R)$ it exists if and only if   the function by which the Casimir section acts is real-valued on $\R\P^1$ (see \cite[prop. 3.5]{Ber2017}). The following  conjecture suggests a relatively easy method to calculate $ J_{\mu,[\alpha:\beta]}(\mathcal{F})$ in terms of the Jantzen quotients.

\begin{conjecture}
 For any $[\alpha:\beta]\in \R\P^1$ and any $\F\in \widetilde{\mathcal{M}}(\g,\K)_{\mu}$  for which $\F|_{[\alpha:\beta]}$ is reducible,     \hspace{3mm}  $\widetilde{J}_{\mu,[\alpha:\beta]}(\mathcal{F})\simeq J_{\mu,[\alpha:\beta]}(\mathcal{F})$.
\label{conj2}
\end{conjecture}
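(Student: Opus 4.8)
The plan is to reduce the stated isomorphism to a single irreducibility statement and then to identify the relevant graded piece with the image of a classical intertwining operator. First I would dispose of the easy half. Since the Jantzen filtration $\{\F|_{[\alpha:\beta]}^{n}\}_{n\in\Z}$ of \cite[sec. 4.1]{Ber2017} is finite and exhaustive, the associated graded $\bigoplus_{n}\F|_{[\alpha:\beta]}^{n}/\F|_{[\alpha:\beta]}^{n+1}$ has exactly the same composition factors, with multiplicities, as $\F|_{[\alpha:\beta]}$ itself. Because $\F\in\widetilde{\mathcal{M}}(\g,\K)_{\mu}$ has $\mu$ occurring with multiplicity one in every fiber, the irreducible $J_{\mu,[\alpha:\beta]}(\F)$ occurs in precisely one Jantzen quotient; that quotient is by definition $\widetilde{J}_{\mu,[\alpha:\beta]}(\F)$, so $J_{\mu,[\alpha:\beta]}(\F)$ is automatically one of its composition factors. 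Thus the entire content of the conjecture is the assertion that the Jantzen quotient containing $\mu$ has no further composition factors, i.e.\ is irreducible. (When $\F|_{[\alpha:\beta]}$ is already irreducible the filtration is trivial and the statement is vacuous, which is why the hypothesis singles out the reducible fibers.)

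To establish that irreducibility I would locate $\mu$ inside the \emph{top} Jantzen quotient. The filtration is built from a nonzero rational intertwiner $\varphi\colon\F\to\F^{\sigma}$, which exists in the real regime governing the conjecture (for $SL_2(\R)$ this is precisely the condition of \cite[prop. 3.5]{Ber2017} that the Casimir section act by a real-valued function on $\R\P^1$). In a local coordinate $t$ at $[\alpha:\beta]$ the operator $\varphi$ acts on the one-dimensional $\mu$-isotypic component by a scalar $c_{\mu}(t)$, and the graded piece in which $\mu$ sits is governed by the order of vanishing of $c_{\mu}$ at $[\alpha:\beta]$. The key input is that, after the standard normalization, the long intertwining operator acts by a unit on the minimal $K$-type, so $c_{\mu}$ has order zero and $\mu$ lands in the top quotient $\F|_{[\alpha:\beta]}^{0}/\F|_{[\alpha:\beta]}^{1}$. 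This is exactly the minimal-$K$-type non-vanishing that underlies Vogan's Theorem \ref{th2}.

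Then I would identify that top quotient with the image of the leading term of $\varphi|_{[\alpha:\beta]}$. By the definition of $\widetilde{\mathcal{M}}(\g,\K)_{\mu}$ together with Vogan's classification, $\F|_{[\alpha:\beta]}$ is, in the Grothendieck group, the standard induced module $I_{P_{\mu}}(\delta_{\mu},\nu)$ for the value $\nu=\nu([\alpha:\beta])$, and $\F^{\sigma}|_{[\alpha:\beta]}$ is its $\sigma$-twisted dual. The leading term of $\varphi|_{[\alpha:\beta]}$ is then, up to an invertible scalar, the classical long intertwining operator from the standard module to its dual, whose image is the Langlands quotient $J_{P_{\mu}}(\delta_{\mu},\nu)$ — an irreducible module containing $\mu$. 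Since $\mu$ has multiplicity one, this image coincides with the unique composition factor $J_{\mu,[\alpha:\beta]}(\F)$. Combining the three steps gives $\widetilde{J}_{\mu,[\alpha:\beta]}(\F)=\F|_{[\alpha:\beta]}^{0}/\F|_{[\alpha:\beta]}^{1}\simeq J_{P_{\mu}}(\delta_{\mu},\nu)=J_{\mu,[\alpha:\beta]}(\F)$, which is the claim.

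The hard part will be the algebraic-family-to-classical bridge in the last step: proving that the leading term of the family intertwiner $\varphi|_{[\alpha:\beta]}$ really does agree, up to a unit, with the classically normalized long intertwining operator, and hence that its image is the irreducible Langlands quotient and not something larger. This is where the geometry of the family over $\X$ interacts with the analytic theory of intertwining operators, and it is what forces the $SL_2(\R)$ argument of Section \ref{sl2} to proceed by explicit computation: there the operators, the functions $c_{\mu}(t)$, and the Langlands quotients can all be written down, so both the placement of $\mu$ in the top quotient and the irreducibility of that quotient are checked by hand. In the general case one lacks such an explicit matching, and for non-dominant or wall values of $\nu$ one must additionally track the chamber in which the Langlands quotient is formed; controlling the leading term of $\varphi$ uniformly across $\R\P^1$ is precisely the crux that keeps the statement a conjecture beyond $SL_2(\R)$.
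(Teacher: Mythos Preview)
Your reduction in the first paragraph --- that the conjecture is equivalent to irreducibility of the Jantzen quotient containing $\mu$ --- matches the paper exactly; this is precisely the observation made in the text immediately following Conjecture~\ref{conj2}. Beyond that point the paper and your proposal diverge.

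The paper does not attempt the general case and offers no argument via Langlands quotients or the long intertwining operator. Its only proof is for $SL_2(\R)$, and that proof is pure explicit computation: using the bases $\{e_n\}$ over $\X_\infty$ and the formulas for the action of $H_\infty, X_\infty, Y_\infty$ given in Section~6.5, the Jantzen quotients are written down directly (Theorem~\ref{th4}, relying on \cite[sec.~5.2]{Ber2017}) and matched against the irreducible modules listed in Tables~\ref{table2} and~\ref{table3}. Irreducibility is then read off, and the Corollary follows. No interpretation of the top quotient as an image of a classical intertwiner is invoked.

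Your conceptual route is a genuinely different and more ambitious strategy, but two concrete issues keep it from being a proof. First, the claim that ``$\F|_{[\alpha:\beta]}$ is, in the Grothendieck group, the standard induced module $I_{P_\mu}(\delta_\mu,\nu)$'' does not follow from Definition~\ref{def521}: condition~(4) there constrains only the fiber at $[1{:}0]$, not at an arbitrary $[\alpha{:}\beta]$. For $SL_2(\R)$ this identification does hold, but only \emph{after} the explicit classification in Propositions~\ref{p632}--\ref{p634}; in general it is part of what must be proved rather than assumed. Second, you correctly isolate the crux --- matching the leading term of the family intertwiner $\varphi$ with the normalized classical long intertwining operator --- but you then defer the $SL_2(\R)$ case to ``explicit computation'' without performing it. Since the paper's entire argument \emph{is} that explicit computation, your proposal as written is a strategy sketch rather than a proof: the step that actually establishes the result, even in the one case where it is known, is absent.
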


It should be stressed that in principle Jantzen quotients are much easier to calculate than composition factors. In addition to that,  Conjecture \ref{conj2}, if holds of course, says that the unique Jantzen quotient of  $\F|_{[\alpha:\beta]}$ that contains $\mu$  is irreducible.  
In the next section we shall see that both Conjecture \ref{conj} and Conjecture \ref{conj2} hold for $SL_2(\R)$.  We note that if $\widetilde{J}_{\mu,[\alpha:\beta]}(\mathcal{F})$ is irreducible  then $\widetilde{J}_{\mu,[\alpha:\beta]}(\mathcal{F})\simeq J_{\mu,[\alpha:\beta]}(\mathcal{F})$. Our strategy in the case of $SL_2(\R)$ will be to show that indeed $\widetilde{J}_{\mu,[\alpha:\beta]}(\mathcal{F})$ is irreducible.

\section{The $SL_2(\R)$-case}\label{sl2}

In this section after  describing the relevant families in the case of ${SL_2(\R)}$ we shall  calculate the various Jantzen quotients  $\widetilde{J}_{\mu,[\alpha:\beta]}(\mathcal{F})$, show that they coincide with ${J}_{\mu,[\alpha:\beta]}(\mathcal{F})$, and prove that  Conjectures \ref{conj} and \ref{conj2} hold for ${SL_2(\R)}$. 
\subsection{The family of Harish-Chandra pairs}\label{sec6.1}
Below we make explicit the deformation family of Harish-Chandra pairs for ${SL_2(\R)}$.
The described analysis  is   is parallel to the one in \cite[sec. 4.2-4.3]{Bernstein2016}). 
Through out this section  $G=SL_2(\C)$, the Cartan involution (of $SL_2(\R)$)  is given by $\Theta(A)=(A^t)^{-1}$ and its fixed point subgroup  is $K=SO(2,\C)$. A corresponding real form is given by   $\sigma(A)=\overline{A}$ and   its fixed point subgroup is $G^{\sigma}=SL_2(\R)$.
We shall fix bases $\{H\}$ of $\fk$ and $\{X,Y\}$ of $\fp$ satisfying 
\begin{eqnarray}
 && [H,X]=2X, [H,Y]=-2Y, [X,Y]=H.
 \label{com}
\end{eqnarray} 
For concreteness we  take 
\begin{eqnarray}\nonumber
&&H= \left(\begin{matrix}
0& -i\\
i& 0
\end{matrix}\right)
,\hspace{2mm} X= \frac{1}{2}\left(\begin{matrix}
1& i\\
-i& -1
\end{matrix}\right),\hspace{2mm} Y= \frac{1}{2}\left(\begin{matrix}
1& i\\
i& -1
\end{matrix}\right).
\end{eqnarray}
Each one  of the  vectors $H,X$, and $Y$ generates a subsheaf of $\g$ (in the sense of Section \ref{se3.1}) that is a  $K$-equivariant invertible sheaf. We denote these sheaves by $\g_{0}$, $\g_{2}$, and $\g_{-2}$, respectively. The action of   $\left(\begin{matrix}
\alpha & \beta\\
  -\beta  & \alpha 
\end{matrix}\right)\in SO(2,\C)$ on $\g_{n}$ is given by multiplication by $(\alpha +i \beta)^{n}$ for any $n\in \{-2,0,2\}$. The sheaf of Lie algebras corresponding to $\K$ coincides with $ \g_{0}$ and shall also be denoted    by $\k$. As $O_{\X}$-modules  $ \g_{0}=\k \simeq \O_{\X}$, $ \g_{\pm2}\simeq \O_{\X}(-1)$. 
The isotypic decomposition of $\g$ is given by  $\g=\g_{-2}\oplus \g_{0}\oplus \g_{2}$. 
More explicitly the section $H_0:=1\otimes H$ is a nowhere vanishing regular section of $\g_{0}$, the sections  $X_0:=1\otimes X$ and  $Y_0:=1\otimes Y$ are   nowhere vanishing rational sections of $\g_{2}$ and $\g_{-2}$, respectively. Moreover, the only pole of $X_0$  (and $Y_0$) is a simple pole at $\infty=[0:1]$.   The sections $H_0,X_0,Y_0$ form an $\mathfrak{sl}_2$-triplet and satisfy the same commutation relations as in 
(\ref{com}), but now these are commutation relations  between rational sections of $\g$.  
This completes the description of $\g$. The family $\K$ is the constant group scheme over $\X$ with fiber  $K=SO(2,\C)$.
\subsection{Generically irreducible families of Harish-Chandra modules} 
In this section we describe few invariants of generically irreducible families of Harish-Chandra modules that are generated by their minimal $K$-types and give their  classification for the deformation family of $SL_2(\R)$. The analysis is parallel to the case of the \textit{contraction family} of $SL_2(\R)$ that is given in \cite[sec. 4]{Bernstein2016}).
\subsubsection{The Casimir section}
Recall that the \textit{Casimir sheaf} $\mathcal{C}$ of $\g$ is the subsheaf of the center of the sheaf of enveloping algebras of $\g$ consisting of all sections of order 2 (with respect to the PBW filtration) that act trivially on the family of trivial modules  \cite[sec. 4.4.1]{Bernstein2016}). As a sheaf of $O_{\X}$-modules $\mathcal{C}\simeq O_{\X}(-2)$.   The canonical Casimir section  (a rational section) of $\mathcal{C}$ \cite[sec. 4.4.2]{Bernstein2016}) is given by 
\begin{eqnarray}
\Omega_0=H_0^2+2H_0+4Y_0X_0
\label{Cas}
\end{eqnarray}
This section satisfies  $\operatorname{ord}_{\infty}(\Omega_0)=-2$ and $\operatorname{ord}_{z}(\Omega_0)=0$ elsewhere on $\X$. 
\begin{lemma}\label{ca}
If $\F\in \mathcal{M}(\g,\K)$ is quasisimple then $\Omega_0$ acts on $\F$ by multiplication by a regular function of the form $c(r)=c_2r^2+c_1r+c_0\in \C[r]$.
\end{lemma}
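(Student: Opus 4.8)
The plan is to combine two facts: first, that $\mathcal{Z}(\g)$ sits inside the constant family $O_{\X}\otimes_{\C}\mathcal{Z}(\fg)$ by Lemma \ref{le}; second, that for $SL_2$ the center $\mathcal{Z}(\fg)$ is the polynomial algebra $\C[\Omega]$ generated by the classical Casimir $\Omega$. Since $\F$ is quasisimple, the action of $\mathcal{Z}(\g)$ on $\F$ factors through a morphism of sheaves of $O_{\X}$-algebras $\chi_{\F}\colon\mathcal{Z}(\g)\to O_{\X}$; in particular the global rational section $\Omega_0$ acts by multiplication by the rational function $\chi_{\F}(\Omega_0)$ on $\X=\C\P^1$, and it remains only to pin down which rational functions can arise.

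First I would restrict attention to $\X_0=\{\alpha\neq 0\}$ with coordinate $r$. There $\Gamma(\X_0,\g)=\C[r]\otimes_{\C}\fg$ is the constant family, so $\Omega_0$ restricted to $\X_0$ is just $1\otimes\Omega$, the classical Casimir; on a quasisimple family its action on $\Gamma(\X_0,\F)$ is multiplication by some element of $\Gamma(\X_0,O_{\X})=\C[r]$, i.e. a genuine polynomial $c(r)$ with no poles on $\X_0$. The content of the lemma is therefore the statement that $\deg c\le 2$, equivalently that the only possible pole of $\chi_{\F}(\Omega_0)$ at $\infty$ is of order at most $2$. This is exactly controlled by the geometry of the Casimir sheaf: by \cite[sec. 4.4.1]{Bernstein2016} one has $\mathcal{C}\simeq O_{\X}(-2)$, and $\Omega_0$ is the distinguished rational section with $\operatorname{ord}_{\infty}(\Omega_0)=-2$ and no other poles (as recorded just above the lemma, using the explicit formula $\Omega_0=H_0^2+2H_0+4Y_0X_0$ together with $\operatorname{ord}_{\infty}(X_0)=\operatorname{ord}_{\infty}(Y_0)=-1$). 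Concretely, multiplying $\Omega_0$ by the coordinate $r^2$ (which vanishes to order $2$ at $\infty$) produces a \emph{regular} global section of $\mathcal{Z}(\g)$ on all of $\X$; its image under $\chi_{\F}$ lands in $\Gamma(\X,O_{\X})=\C$. Hence $r^2\,c(r)$ extends to a regular function on $\C\P^1$, forcing $c(r)=c_2r^2+c_1r+c_0$ for constants $c_i\in\C$.

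I would carry out the steps in this order: (1) invoke quasisimplicity to get $\chi_{\F}$ and reduce to identifying $\chi_{\F}(\Omega_0)$; (2) restrict to $\X_0$ and use that $\mathcal{Z}(\fg)=\C[\Omega]$ to see $c(r)\in\C[r]$; (3) use $\mathcal{C}\simeq O_{\X}(-2)$ and $\operatorname{ord}_{\infty}(\Omega_0)=-2$ to show $r^2\Omega_0$ is a regular global section, so $r^2 c(r)$ is regular on $\X$; (4) conclude $\deg_r c\le 2$. The only mildly delicate point is making (3) precise: one must check that "multiplication by $r^2$" really does send the rational section $\Omega_0$ to a regular section of $\mathcal{Z}(\g)$, i.e. that near $\infty$ the product $R^{-2}\cdot\Omega_0$ (in the coordinate $R=1/r$, where $r^2=R^{-2}$) has no pole — but this is immediate from $\operatorname{ord}_{\infty}(\Omega_0)=-2$, and since $r^2\Omega_0$ is central and regular everywhere it is a bona fide element of $\Gamma(\X,\mathcal{Z}(\g))$, on which $\chi_{\F}$ takes a constant value. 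Everything else is routine; I expect step (3) to be where the real (if modest) work lies, and it is essentially a bookkeeping exercise with orders of vanishing already set up in the preceding paragraphs.
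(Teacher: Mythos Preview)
Your overall plan --- work on the two affine charts $\X_0$ and $\X_\infty$ separately, using quasisimplicity to get a polynomial $c(r)$ on $\X_0$ and then the pole structure of $\Omega_0$ at $\infty$ to bound its degree --- is exactly the paper's approach. But step~(3) contains a genuine coordinate error that makes the argument, as written, incorrect.

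You assert that $r^2$ ``vanishes to order $2$ at $\infty$'' and later that $R^{-2}\Omega_0$ has no pole there. Both statements are backwards: the coordinate $r=\beta/\alpha$ has a \emph{pole} at $\infty=[0{:}1]$; it is $R=1/r$ that vanishes there. Since $\operatorname{ord}_\infty(\Omega_0)=-2$ and $\operatorname{ord}_\infty(R^{-2})=-2$, the product $r^2\Omega_0=R^{-2}\Omega_0$ acquires a pole of order $4$ at $\infty$, not a removable singularity. Your stated conclusion, that $r^2 c(r)$ extends to a regular function on $\C\P^1$, would in any case give the wrong answer: a global regular function on $\C\P^1$ is constant, which would force $c(r)=\text{const}\cdot r^{-2}$, not a quadratic polynomial.

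The repair is to multiply by $R^2$ rather than $r^2$. Then $R^2\Omega_0$ is a regular section over $\X_\infty$ (not a global one --- it has a pole at $r=0$, and indeed $\mathcal{C}\simeq O_\X(-2)$ has no nonzero global sections), so $\chi_{\F}(R^2\Omega_0)=R^2\,c(R^{-1})$ lies in $\Gamma(\X_\infty,O_\X)=\C[R]$. That condition is exactly $\deg_r c\le 2$, and this is precisely the paper's two-line proof.
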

\begin{proof}
Since $\Omega_0$ is a regular section over $\X_0$ then is must act by multiplication by some regular function $c(r)\in \C[r]$. Since $R^2\Omega_0$ is  a regular section over $\X_{\infty}$ then $R^2c(R^{-1})\in \C[R]$ and hence $c(r)=c_2r^2+c_1r+c_0$ for some $c_2,c_1,c_0\in \C$. 
\end{proof}
\begin{remark}
In general for the deformation family arising from a symmetric pair $(G,K)$ the Casimir section generates  a subsheaf that is isomorphic to $O_{\X}(-2)$ and  Lemma \ref{ca} holds.
\end{remark}

\subsubsection{The isotypic subsheaves}
As mentioned in Section \ref{sec4} each $\F\in \mathcal{M}(\g,\K)$  decomposes to a direct sum of $\K$-equivariant $\O_{\X}$-modules
   \[
   \mathcal F = \bigoplus _{n\in \widehat K}   \mathcal{F}_n .
   \]
 Here we identify the  algebraic dual of $K$ with $\Z$ and denote the parameter by $n$ instead of $\mu$.  
The sequence  $\{ \mathcal{F}_n\}_{n\in \Z}$ is an invariant of an equivalence class  of an algebraic family of Harish-Chandra modules for  $(\g,\K)$. If  $\mathcal{F}$ is generically irreducible then each non zero $\mathcal{F}_n$ is an invertible sheaf.

\subsubsection{Generically irreducible families generated by a minimal $K$-type}
For generically irreducible families generated by a fixed minimal $K$-type the Casimir action and the isotypic subsheaf corresponding to the minimal $K$-type completely determines the equivalence class.  Below we give a precise statement. For each  $m\in \Z$ we denote by  ${\mathcal{M}(\g,\K)}_m$ the set of equivalence classes of generically  irreducible families in $\mathcal{M}(\g,\K)$ that have $m$ as a minimal $K$-type and  are being  generated by their isotypic subsheaf corresponding to $m$. For $SL_2(\R)$ and its Cartan motion group an irreducible Harish-Chandra module is completely  determined by its $K$-types and the action of the center of the universal enveloping algebras. For generically irreducible families of Harish-Chandra modules the following analogous statement holds. 
\begin{theorem}
Let $m\in \Z$ and  $\F\in {\mathcal{M}(\g,\K)}_m$.  Then up to an isomorphism    $\F$ is completely determined by $\F_m$ and the action of the Casimir section. 
\end{theorem}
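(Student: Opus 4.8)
The plan is to reconstruct $\F$ from the two pieces of data in a way parallel to the classical $\mathfrak{sl}_2$-analysis, working $K$-isotypic component by $K$-isotypic component. First I would record the structure of the nonzero isotypic sheaves: since $\F$ is generically irreducible and generated by $\F_m$, each nonzero $\F_n$ is an invertible $\Ox$-module, and the action of the rational sections $X_0\in\g_2$ and $Y_0\in\g_{-2}$ gives $\Ox$-module maps $X_0:\F_n\to\F_{n+2}$ and $Y_0:\F_n\to\F_{n-2}$ (rational, with possible poles only at $\infty$, since that is the only pole of $X_0$ and $Y_0$). Because $\F_m$ generates $\F$ under the $\g$-action, and the $\k$-action preserves each $\F_n$ while $X_0,Y_0$ shift the grading by $\pm2$, the support of $\{n:\F_n\neq0\}$ is contained in a single coset $m+2\Z$ and is an ``interval'' in that coset (possibly a half-line or the whole line) determined by where the raising/lowering maps become zero or fail to be surjective — exactly as in the representation theory of $\mathfrak{sl}_2$. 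The first step is thus to pin down this support and show it is determined by $\F_m$ together with the scalar function $c(r)$ by which $\Omega_0$ acts (Lemma \ref{ca}).

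The second step is to show that, once the support $S\subset m+2\Z$ is known, each $\F_n$ for $n\in S$ is determined as an abstract invertible sheaf, i.e.\ determined by its degree, and moreover that the $\g$-module structure is rigid. The key computational input is the Casimir identity $\Omega_0=H_0^2+2H_0+4Y_0X_0$ from (\ref{Cas}): on the $n$-isotypic component $H_0$ acts by the scalar $n$ (or rather by a fixed scalar attached to the character $n$ of $K$, after the identification $\widehat K\cong\Z$), so $4Y_0X_0$ acts on $\F_n$ by multiplication by the regular function $c(r)-n^2-2n$. This forces the composite $Y_0X_0:\F_n\to\F_n$ to be a prescribed $\Ox$-linear endomorphism of an invertible sheaf, i.e.\ multiplication by a prescribed rational function; together with the analogous identity using $\Omega_0=H_0^2-2H_0+4X_0Y_0$ one controls $X_0Y_0$ as well. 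Knowing the divisors of these composites, plus knowing that $X_0$ and $Y_0$ have poles only at $\infty$, determines the degree of each $\F_n$ recursively starting from $\deg\F_m$, and then determines the maps $X_0,Y_0$ up to the only ambiguity left, which is a choice of nonzero scalar on each edge. That scalar ambiguity is absorbed by rescaling the generators of the invertible sheaves $\F_n$, i.e.\ it does not change the isomorphism class of the family.

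The third step is to assemble these local determinations into an actual isomorphism of families between any two $\F,\F'\in\mathcal{M}(\g,\K)_m$ with $\F_m\cong\F'_m$ and the same Casimir action. I would fix an isomorphism $\F_m\xrightarrow{\sim}\F'_m$ of $K$-equivariant invertible sheaves, then propagate it along the chain of raising and lowering maps: at each step the map $X_0$ (or $Y_0$) out of a component is, by the previous paragraph, determined up to scalar, and one chooses the isomorphism of the next component so that the square commutes. One must check consistency, i.e.\ that propagating from $\F_m$ to $\F_{m+2}$ and back via $Y_0X_0$ is compatible — but this is exactly the content of the Casimir identity, so no obstruction arises. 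Finally one checks that the resulting collection of $\Ox$-isomorphisms $\F_n\to\F'_n$ is simultaneously $\k$-equivariant (automatic, since $\k=\g_0$ acts by the grading) and intertwines the full $\g$-action (it suffices to check on the generating sections $H_0,X_0,Y_0$, which we have arranged), hence is an isomorphism in $\mathcal{M}(\g,\K)$.

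The main obstacle I expect is the bookkeeping at the ``boundary'' of the support $S$ and at the point $\infty\in\X$ where $X_0,Y_0$ acquire poles: one must verify that the degree of $\F_n$ cannot jump in more than one way consistent with the Casimir data, i.e.\ that the local behaviour at $\infty$ (where the fiber is the Cartan motion algebra $\fk\ltimes\fp$ rather than $\fg$) does not introduce an extra discrete invariant beyond $\F_m$ and $c(r)$. Concretely, the delicate point is to rule out two families with the same $\F_m$ and same Casimir action but differing in how $X_0$ degenerates at $\infty$; resolving this requires using generic irreducibility (so that the generic fiber is an irreducible $(\fg,K)$-module, which is rigidly classified) together with flatness of $\F$ over $\Ox$ to transport that rigidity across all fibers, including $\infty$. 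Once that is done the rest is the routine $\mathfrak{sl}_2$ ladder argument sketched above.
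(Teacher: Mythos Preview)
Your overall strategy is exactly the one the paper has in mind: it defers the proof to \cite[Theorem 4.9.3]{Bernstein2016}, and the subsequent ``Concrete bases'' section (formulas (\ref{5})--(\ref{7})) is precisely the output of the ladder argument you describe --- choose nowhere-vanishing rational generators $f_n$ of each invertible $\F_n$, normalize so that $X_0f_n=f_{n+2}$ for $n\ge m$ and $Y_0f_n=f_{n-2}$ for $n\le m$, and read off the remaining arrows from the Casimir identity.

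There is, however, one genuine gap in your Step~2 and in the ``main obstacle'' paragraph. Knowing the divisor of $Y_0X_0=\tfrac14(c(r)-n(n+2))$ on $\F_n$ together with the pole structure of $X_0,Y_0$ does \emph{not} by itself determine $\deg\F_{n+2}$ from $\deg\F_n$: the action map $\g_2\otimes\F_n\to\F_{n+2}$ is a section of $\Ox(\deg\F_{n+2}-\deg\F_n+1)$, and the Casimir only controls the composite, so a priori this section could vanish to any order at $\infty$ while the composite stays fixed. Your proposed fix --- generic irreducibility plus flatness --- does not close this: generic irreducibility forces the cokernel of $\g_2\otimes\F_n\to\F_{n+2}$ to be supported on finitely many points, but says nothing about its length at $\infty$, and flatness of $\F$ (i.e.\ local freeness) is automatic here and contributes nothing.

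What actually pins down the degrees is the hypothesis you invoke in your first paragraph but then drop: $\F$ is \emph{generated by $\F_m$}. This is a sheaf condition, so it holds over $\X_\infty$ as well; hence for $n\ge m$ the map $X_\infty\colon\F_n|_{\X_\infty}\to\F_{n+2}|_{\X_\infty}$ is surjective between rank-one free $\C[R]$-modules, i.e.\ an isomorphism, and likewise over $\X_0$ with $X_0$. Gluing forces $\g_2\otimes\F_n\cong\F_{n+2}$, so $\deg\F_{n+2}=\deg\F_n-1$. The same argument with $Y_0,Y_\infty$ handles $n\le m$. Once you insert this, the rest of your argument goes through and yields exactly the normalized formulas the paper records.
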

The proof  is analogous  to the one of  Theorem 4.9.3 of \cite{Bernstein2016}. The main change is  in  the action of the canonical Casimir section. In our case it must act  as multiplication by a  polynomial of degree bounded by two,  while in   \cite{Bernstein2016} it must act as multiplication by $c_1r+c_0+c_{-1}r^{-1}\in \C[r,r^{-1}]$. 
It should be noted that  $\F_m$ is isomorphic to $O_{\X}(l)\otimes_{\C}V_m$ for some $l\in \Z $ where $V_m$ is an irreducible representation of $K$ corresponding to $m$. Up to the action of the Picard group of $\X$    a family $\F\in {\mathcal{M}(\g,\K)}_m$ is determined by the action of the Casimir section. We list all possibilities in Table \ref{table1}.

\begin{table}[htbp]
\centering
\begin{tabular}{|c|c|c|c|}
\hline
   & \bf $K$-types &   {\bf Casimir}&  {\bf Parameters'}   \\ 
  &   &  $c(r)$ &  {\bf domain}   \\ \hline
\multirow{ 3}{*}{${ {\mathcal{M}(\g,\K)}_{0}}$} & $2\Z$ & $c_2r^2+c_1r+c_0$ & $c(r)\not \equiv k(k+2)$   for  \\
&&&  $0 \leq k\in \Z$  and even \\ \cline{2-4} 
   & $-k, -k{+}2,\dots , k$ &$k(k+2)$ & $0\leq k\in \Z$ and  even \\  \cline{3-4} \hline
\multirow{ 4}{*}{${ {\mathcal{M}(\g,\K)}_{1}}$ } & $2\Z+1$ & $c_2r^2+c_1r+c_0$ & $c(r)\not \equiv k(k+2)$   for    \\
&&& $-1\leq k\in \Z$   odd \\ \cline{2-4} 
   & $-k,-k{+}2,\dots , k$ &$k(k+2)$ & $0 \leq k\in \Z$  and  odd  \\  \cline{2-4} 
      & $1,3,5,\dots $ &$-1$ &  \\   \hline
\multirow{ 4}{*}{${ {\mathcal{M}(\g,\K)})_{-1}}$} & $2\Z+1$ & $c_2r^2+c_1r+c_0$ & $c(r)\not \equiv k(k+2)$   for    \\
&&& $-1\leq k\in \Z$   odd \\ \cline{2-4} 
   & $-k, -k{+}2,\dots , k$ &$k(k+2)$ & $0\leq k\in \Z$  and  odd  \\  \cline{2-4} 
      & $-1,-3,-5,\dots $ &$-1$ &  \\   \hline
      ${ {\mathcal{M}(\g,\K)}_{d}}$ & $d,d+2,d+4,\dots$ & $d(d-2)$ & $1<d \in \Z$ \\ \hline
           ${ {\mathcal{M}(\g,\K)}_{d}}$ & $d,d-2,d-4,\dots$ & $d(d+2)$ & $-1>d \in \Z$ \\ \hline
\end{tabular}
\caption{$K$-types and Casimir action for $\F\in {\mathcal{M}(\g,\K)}_m$}
\label{table1}
\end{table}

\subsubsection{Concerete bases}
In this section we show how the Casimir and $K$-types explicitly determine the action of $\g$ on $\F \in  {\mathcal{M}(\g,\K)}_{m}$ by means of choosing certain bases. 

Through out this section let $\F$ be in  ${\mathcal{M}(\g,\K)}_{m}$ with Casimir action that is given by  $c(r)$ and set of $K$-types   $I\subset \Z$. As we already mentioned $\F$ decomposes with respect to the action of $K$ as $\mathcal{F} = \bigoplus_{n\in I}\mathcal{F}_n$ and 
 each  of the nonzero isotypic subsheavs $\mathcal F_n$ is an invertible sheaf. For a nonzero  $\mathcal F_n$   we can choose  a rational section $f_n$  that is regular except perhaps at $r=\infty$, and also nowhere vanishing except perhaps    at $r=\infty$. Moreover   $\operatorname{ord}_{\infty} (f_n) = \deg (\mathcal F_n)$. The section $f_n$  is unique, up to a nonzero multiplicative factor.  For all of these facts see \cite[sec 4.6]{Bernstein2016}.  
The commutation relations of the   rational sections $X_0$, $H_0$ and $Y_0$  introduced in Paragraph~\ref{sec6.1} imply that we can rescale the sections $f_n$ such that 
\begin{eqnarray}\label{5}
&&H_0f_n  =nf_{n} \\
&&X_0f_n  =\begin{cases} 
 f_{n+2},& n+2\in I, m\leq n  \\
 \frac{1}{4}\left(c(r)-n(n+2) \right)f_{n+2},& n+2\in I, m>n 
\end{cases} \\
&&Y_0f_{n} =\begin{cases} 
  \frac{1}{4}\left(c(r)-n(n-2) \right)f_{n-2},& n-2\in I, m< n  \\
 f_{n-2},& n-2\in I, m\geq n \label{7}
\end{cases} 
\end{eqnarray}
hold, as long as $n\pm2 \in I$.

\subsection{The classes  $\widetilde{\mathcal{M}}(\g,\K)_{m}$ }
In this section we  explicitly describe $\widetilde{\mathcal{M}}(\g,\K)_{m}$. We shall start by proving that the generalized Harish-Chandra homomorphisms are regular. Then for each $m\in \Z=\widehat{K}$ we shall  specify the corresponding cuspidal parabolic $P_m=M_mA_mN_m$ with its $\Theta$-stable Cartan $\fh_m$, the corresponding discrete series representation $\delta_m\in \widehat{M_m}$, and the modules  $I_{P_{m}}(\delta_{m},0)$ and  $J_{P_{m}}(\delta_{m},0)$.  
\subsubsection{The generalized Harish-Chandra homomorphism} 
Here we show that the generalized Harish-Chandra homomorphism is a morphism of $O_{\X}$-modules.  

\begin{lemma}
Let  $\Omega$ be the Casimir of $\fg=\mathfrak{sl}_2(\C)$
\begin{enumerate}
\item $\Gamma(\X_{0},\mathcal{Z}(\g))=\C[r]\otimes_{\C}\C[\Omega]=\C[r,\Omega_0]$
\item  $\Gamma(\X_{\infty},\mathcal{Z}(\g))=\C[R,R^2\Omega_0]$
\end{enumerate}
\label{lem631}
\end{lemma}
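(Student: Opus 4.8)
\textbf{Proof proposal for Lemma \ref{lem631}.}

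The plan is to compute the two graded pieces of $\mathcal{Z}(\g)$ directly from the known description of the center $\mathcal{Z}(\fg)$ of $\mathcal{U}(\mathfrak{sl}_2(\C))$ together with Lemma \ref{le}, which identifies $\mathcal{Z}(\g)$ with $\mathcal{Z}(\g_{\text{const}})\cap \mathcal{U}(\g)$. For part (1), over $\X_0$ the family $\g$ restricts to the constant family $\C[r]\otimes_\C\fg$, so $\Gamma(\X_0,\mathcal{U}(\g))=\C[r]\otimes_\C\mathcal{U}(\fg)$ and hence $\Gamma(\X_0,\mathcal{Z}(\g))=\Gamma(\X_0,\mathcal{Z}(\g_{\text{const}}))=\C[r]\otimes_\C\mathcal{Z}(\fg)$. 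Since $\mathcal{Z}(\mathfrak{sl}_2(\C))=\C[\Omega]$ is a polynomial ring on the Casimir, and since $\Omega_0$ is precisely the image of $1\otimes\Omega$ as a section over $\X_0$ (it is regular there, by the order computation following \eqref{Cas}), this gives $\Gamma(\X_0,\mathcal{Z}(\g))=\C[r]\otimes_\C\C[\Omega]=\C[r,\Omega_0]$.

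For part (2), I would work over $\X_\infty$ with the coordinate $R$, where $\Gamma(\X_\infty,\g)=(\C[R]\otimes_\C\fk)\oplus(R\C[R]\otimes_\C\fp)$, i.e.\ it is generated as an $O_{\X_\infty}$-Lie algebra by $H_0$, $RX_0$, and $RY_0$, which are regular there. By Lemma \ref{le}, a section $s\in\Gamma(\X_\infty,\mathcal{Z}(\g))$ is the same as an element of $\Gamma(\X_\infty,\mathcal{Z}(\g_{\text{const}}))=\C[R]\otimes_\C\C[\Omega]$ that additionally lies in $\mathcal{U}(\g)$; writing such an element as a polynomial in $\Omega_0$ with coefficients in $\C[R]$, and using $\Omega_0=R^{-2}(R^2\Omega_0)$ together with the fact that $R^2\Omega_0=(RH_0)^2 + 2R\cdot(RH_0)\cdot R^{-1}\cdot\dots$ — more precisely $R^2\Omega_0 = (RH_0)\cdot(RH_0) + 2R(RH_0) + 4(RY_0)(RX_0)$ lies in $\Gamma(\X_\infty,\mathcal{U}(\g))$ since $RH_0$, $RX_0$, $RY_0$ all do (here $RH_0$ is a regular multiple of the regular section $H_0$) — one checks that $\C[R,R^2\Omega_0]\subseteq\Gamma(\X_\infty,\mathcal{Z}(\g))$. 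For the reverse inclusion, one expands a putative central section in the $\{H_0,X_0,Y_0\}$-PBW basis over the fraction field and tracks the orders of the coefficient functions at $\infty$: a degree-$d$ polynomial in $\Omega_0$ contributes a term whose leading $X_0Y_0$-monomial has a pole of order $2d$ at $\infty$, and regularity of the section as an element of $\mathcal{U}(\g)|_{\X_\infty}$ forces the $\C[R]$-coefficient of $\Omega_0^d$ to be divisible by $R^{2d}$, i.e.\ the section is a $\C[R]$-combination of powers of $R^2\Omega_0$.

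The main obstacle is the bookkeeping in the reverse inclusion of part (2): one must be careful that the integrality/regularity condition defining $\mathcal{U}(\g)|_{\X_\infty}$ inside $\mathcal{U}(\g_{\text{const}})|_{\X_\infty}$ is exactly ``values at $\infty$ lie in $\mathcal{U}(\fk)$,'' and translate this into the precise divisibility statement $R^{2d}\mid$ (coefficient of $\Omega_0^d$). This is cleanest if I first record, as a preliminary observation, that $\Gamma(\X_\infty,\mathcal{U}(\g))$ is the $\C[R]$-subalgebra of $\C[R]\otimes_\C\mathcal{U}(\fg)$ generated by $H_0$, $RX_0$, $RY_0$ (this follows from the PBW theorem for the sheaf of enveloping algebras, cf.\ \cite[sec.\ 2.3]{Bernstein2016}), and then note that in this algebra $\Omega_0^d$ appears with the predicted pole order, so that matching orders gives the claim. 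Everything else is routine.
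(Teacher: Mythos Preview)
Your proposal is correct and follows the same overall strategy as the paper: both invoke Lemma~\ref{le} to reduce to computing which elements of $\C[R]\otimes_\C\C[\Omega]$ lie in $\Gamma(\X_\infty,\mathcal{U}(\g))$, and both use the explicit expression $\Omega=H^2+2H+4YX$ in a basis adapted to the Cartan decomposition. For the reverse inclusion in part~(2) the paper argues abstractly via the PID structure of $\C[R]$ (in effect, observing that the Casimir line is a rank-one $\C[R]$-module and pinning down its generator over $\X_\infty$), whereas you track pole orders term by term in the PBW basis; your route is more explicit and sidesteps an imprecision in the paper's phrasing (the paper calls $\C[R]\otimes_\C\C[\Omega]$ a ``rank one free $\C[R]$-module,'' which is not literally true). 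One small caution: your heuristic that the regularity condition on $\mathcal{U}(\g)$ over $\X_\infty$ is ``values at $\infty$ lie in $\mathcal{U}(\fk)$'' is not quite correct---for instance $RX_0^2$ vanishes at $\infty$ but is not a section of $\mathcal{U}(\g)$ there. The correct description is the one you state immediately afterwards, namely that $\Gamma(\X_\infty,\mathcal{U}(\g))$ is the $\C[R]$-subalgebra generated by $H_0$, $RX_0$, $RY_0$, and that is what your induction on the $\Omega_0$-degree actually uses; so the argument goes through.
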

\begin{proof}
By  Lemma (\ref{le}) $\mathcal{Z}(\g)\subset \mathcal{Z}(\g_{\text{const}})= O_{\X}\otimes_{\C}\mathcal{Z}(\fg)=O_{\X}\otimes_{\C}\C[\Omega]$, hence the first part of the lemma is clear. For the second part observe that $\Gamma(\X_{\infty},\mathcal{Z}(\g))$ is a submodule of the rank one free $\C[R]$-module, $\C[R]\otimes_{\C}\C[\Omega]$. Since $\C[R]$ is a principal ideal domain then  $\Gamma(\X_{\infty},\mathcal{Z}(\g))$ is also a rank one free $\C[R]$-module and hence have a basis that  consist of an element of the form $f(R)\otimes \Omega$ for some $f(R)\in \C[R]$.  From the realization of $\Omega$ as  $H^2+2H+4YX$ in terms of  bases of $\fk$ and $\fp$ that satisfy equations  \ref{com}  it follows that $f(R)$ must be of the form $cR^2$ for some $c\in \C^*$.  
\end{proof}

\begin{proposition}
For any $\Theta$-stable Cartan $\fh$ of  $\fg=\mathfrak{sl}_2(\C)$ the generalized Harish-Chandra homomorphism maps $\mathcal{Z}(\g)$ into $\mathcal{U}(\h)$.   
\end{proposition}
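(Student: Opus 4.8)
The plan is to reduce the statement to an explicit computation in the rank-one case, using the two $\Theta$-stable Cartan subalgebras of $\mathfrak{sl}_2(\C)$ up to conjugacy: the compact Cartan $\fh_c = \C H$ (contained in $\fk$) and the split Cartan $\fh_s$ spanned by $X-Y$ (contained in $\fp$, since $X,Y$ span $\fp$). I would treat these two cases separately, since by Lemma \ref{lem631} we already have a concrete description of $\mathcal{Z}(\g)$ over both charts $\X_0$ and $\X_\infty$, namely $\Gamma(\X_0,\mathcal{Z}(\g)) = \C[r,\Omega_0]$ and $\Gamma(\X_\infty,\mathcal{Z}(\g)) = \C[R,R^2\Omega_0]$. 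It therefore suffices to check that $\widetilde{\gamma}_{\fh}$ carries $\Omega_0$ into $\Gamma(\X_0,\mathcal{U}(\h))$ and $R^2\Omega_0$ into $\Gamma(\X_\infty,\mathcal{U}(\h))$ for each of the two Cartans, since the ring generators $r,R$ map to themselves.

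First, for the compact Cartan $\fh_c = \C H \subset \fk$: here $\h_c = S_{\fh_c} \cong O_{\X}$ is a \emph{constant} subfamily, so $\Gamma(\X_0,\mathcal{U}(\h_c)) = \C[r]\otimes_\C \C[H]$ and $\Gamma(\X_\infty,\mathcal{U}(\h_c)) = \C[R]\otimes_\C\C[H]$. The classical normalized Harish-Chandra map sends $\Omega \mapsto H^2 + 2H$ shifted by $\rho$, i.e. to a degree-two polynomial in $H$ with constant coefficients (independent of $r$). Hence $\widetilde{\gamma}_{\fh_c}(\Omega_0)$ is a $\C$-polynomial in $H$, which lies in $\Gamma(\X_0,\mathcal{U}(\h_c))$; and $\widetilde{\gamma}_{\fh_c}(R^2\Omega_0) = R^2\cdot(\text{polynomial in }H)$, which lies in $R^2\C[R]\otimes\C[H] \subset \Gamma(\X_\infty,\mathcal{U}(\h_c))$. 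So regularity is immediate in the compact case.

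The substantive case is the split Cartan $\fh_s$, because there $\h_s = S_{\fh_s} \cong O_{\X}(-1)$ is \emph{non-constant}: a generator $W_0$ of $\h_s$ over $\X_0$ is a rational section with a simple pole at $\infty$, so $\Gamma(\X_\infty,\mathcal{U}(\h_s))$ is generated over $\C[R]$ by powers of $R W_0$, not by powers of $W_0$. I would pick the explicit element $W = X - Y \in \fh_s$, compute the classical Harish-Chandra image $\gamma_{\fh_s}(\Omega)$ — a degree-two polynomial $W^2 + (\text{lower order})$ in $W$ with the appropriate $\rho$-shift — and then trace through the extension-of-scalars-then-restriction definition of $\widetilde{\gamma}_{\fh_s}$ to get an explicit formula for $\widetilde{\gamma}_{\fh_s}(\Omega_0)$ in terms of the rational section $W_0 := 1\otimes W$ of $\h_s$. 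The point to verify is that $\widetilde{\gamma}_{\fh_s}(\Omega_0) = W_0^2 + (\text{constant})$: the leading term $W_0^2$ has a double pole at $\infty$ matching that of $\Omega_0$, so it is regular on $\X_0$, and $R^2 W_0^2 = (RW_0)^2$ is regular on $\X_\infty$. The main obstacle, and the only place where a genuine check is needed, is confirming that the $\rho$-shift contributes only a \emph{constant} (lower-order in $W_0$) term rather than a term with its own pole — that is, that the linear-in-$W$ and constant parts of $\gamma_{\fh_s}(\Omega)$ lift to sections of $\h_s^{\otimes \le 1}$ that remain regular after localization at $\infty$, using $\h_s^* \cong O_{\X}(1)$ so that constants pair regularly. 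Once the explicit formula for $\widetilde{\gamma}_{\fh_s}(\Omega_0)$ is in hand this is a one-line order-of-pole count at $\infty$, completing the proof for both Cartans and hence the proposition.
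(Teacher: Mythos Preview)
Your proposal is correct and follows the same reduction as the paper: by Lemma~\ref{lem631} one only needs to check that $R^2\otimes\gamma_{\fh}(\Omega)\in\Gamma(\X_\infty,\mathcal{U}(\h))$. The paper, however, dispatches both Cartans in a single line rather than splitting into cases: since the classical Harish--Chandra map does not increase PBW degree and $\Omega$ has degree at most two, $\gamma_{\fh}(\Omega)$ is a polynomial of degree $\le 2$ in a generator $W$ of $\fh$, and then $R^2\otimes\gamma_{\fh}(\Omega)$ is automatically a $\C[R]$-combination of $1$, $RW_0$, and $(RW_0)^2$ (or of $1,W_0,W_0^2$ in the compact case), hence regular over $\X_\infty$. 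In particular your concern about the $\rho$-shift is unnecessary: even if $\gamma_{\fh_s}(\Omega)$ had a nonzero linear term $bW$, the contribution $R^2\otimes bW_0 = bR\cdot(RW_0)$ would still lie in $\Gamma(\X_\infty,\mathcal{U}(\h_s))$. Your explicit case-by-case computation buys concreteness (and confirms the later formulas $\gamma_c(\Omega_0)=H_0^2-1$, $\gamma_s(\Omega_0)=(H_0^s)^2-1$ used in \S6.3), while the paper's filtration argument is shorter and visibly generalizes, as the Appendix later shows for arbitrary $\fg$ and maximally compact $\fh$.
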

\begin{proof}
By Lemma \ref{lem631} it is enough to show that $\widetilde{\gamma}_{\fh}(\Gamma(\X_{\infty},\mathcal{Z}(\g))\subset \Gamma(\X_{\infty},\mathcal{U}(\h))$. Since $\Gamma(\X_{\infty},\mathcal{Z}(\g))=\C[R,R^2\Omega_0]$  it enough to show that $\widetilde{\gamma}_{\fh}(R^2\otimes \Omega)=R^2\otimes {\gamma}_{\fh}(\Omega)\in \Gamma(\X_{\infty},\mathcal{U}(\h))$. Since   $\Omega$ is in the space of elements of degree not more than two with respect to the usual PBW filtration then  $R^2\otimes {\gamma}_{\fh}(\Omega)\in \Gamma(\X_{\infty},\mathcal{U}(\h))$.
\end{proof}

\subsubsection{The case of $|m|>1$ (the discrete series case)}
If $m\in \Z$ with $|m|>1$ then by Vogan's description  of the admissible  dual we can take  $P_m=M_m=SL_2(\R)$. The subgroups $A_m$ and $N_m$ are equal to  the trivial group. The representation $\delta_m$ is the unique discrete series representation of $M_m=SL_2(\R)$  having $m$ as a minimal $K$-type. In this case $I_{P_{m}}(\delta_{m},0)=J_{P_{m}}(\delta_{m},0)=\delta_m$. The $\Theta$-stable Cartan $\fh_m$  is  $\mathfrak{so}(2,\C)=\fk=\operatorname{span}\{H\}$ and $\fn$ and $\overline{\fn}$ are given by  $\C X$ and $\C Y$,  respectively. 
Since  $\fh_m$ with $|m|>1$ arises from a compact Cartan of $\mathfrak{sl}_2(\R)$  we also denote it by $\fh_c$. The corresponding $\rho_c\in \fh_c^*$ is given by $\rho_{c}(H)=1$ and  the generalized Harish-Chandra homomorphism is determined by 
\begin{eqnarray}
\gamma_{c}(\Omega_0)=(H_0-1)^2+2(H_0-1)=H_0^2-1
\end{eqnarray}
In particular  $\gamma_{c}(R^2\Omega_0)=R^2\otimes (H_0^2-1)\in \Gamma(\X_{\infty},\mathcal{U}(\h_m))
$. Note that since $\fh_c=\fk$ then $\h_c\simeq O_{\X}$ so any morphism of sheaves of commutative $O_{\X}$-algebras    $\psi:\mathcal{U}(\h_c)\longrightarrow O_{\X}$ is given by $\psi(H_0)=\alpha$ for some $\alpha\in \C$.  

\begin{proposition}\label{p632}
Let $d\in \Z$ such that $|d|>1$ then $\widetilde{\mathcal{M}}(\g,\K)_{d}= { {\mathcal{M}(\g,\K)}_{d}}$.
\end{proposition}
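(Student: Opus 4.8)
The plan is to show the two collections coincide by a double inclusion, with the nontrivial direction being $\mathcal{M}(\g,\K)_d \subseteq \widetilde{\mathcal{M}}(\g,\K)_d$. First I would recall that, by Definition \ref{def521}, membership in $\widetilde{\mathcal{M}}(\g,\K)_d$ requires four things: generic irreducibility, being generated by $\F_d$, having an infinitesimal character with respect to $\fh_d=\fh_c$, and the composition-factor condition on $\F|_{[1:0]}$ relative to $I_{P_d}(\delta_d,0)$. The first two conditions are built into the definition of $\mathcal{M}(\g,\K)_d$, so the work is entirely in conditions (3) and (4).

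For condition (3): by Table \ref{table1}, a family $\F\in \mathcal{M}(\g,\K)_d$ with $|d|>1$ has Casimir acting by the \emph{constant} function $c(r)=d(d-2)$ (for $d>1$) or $c(r)=d(d+2)$ (for $d<-1$); in either case $c(r)$ is a constant $c_0$. Then $\Omega_0$ acts on $\F$ by the scalar $c_0\in\C$, i.e. $\chi_{\F}(\Omega_0)=c_0$ is a constant regular function on $\X$. Since $\fh_c=\fk\simeq O_\X$, a morphism $\psi_\F:\mathcal{U}(\h_c)\to O_\X$ is determined by $\psi_\F(H_0)=\alpha$ for some $\alpha\in\C$, and $\psi_\F\circ\widetilde{\gamma}_c(\Omega_0)=\psi_\F(H_0^2-1)=\alpha^2-1$. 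So I need only pick $\alpha\in\C$ with $\alpha^2-1=c_0$ (that is $\alpha^2 = d^2-2d$ resp.\ $d^2+2d$), which always exists over $\C$; then the diagram of Definition \ref{d4} commutes, giving an infinitesimal character with respect to $\fh_c$. (One should record, but it is immediate, that commutativity on the generator $\Omega_0$ of $\mathcal{Z}(\g)$ — using $\Gamma(\X_\infty,\mathcal{Z}(\g))=\C[R,R^2\Omega_0]$ from Lemma \ref{lem631} — suffices, since $\widetilde\gamma_c(R^2\Omega_0)=R^2\otimes(H_0^2-1)$ and $\psi_\F$ is an $O_\X$-algebra map.)

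For condition (4): here I would use that for $|m|>1$ one has $I_{P_m}(\delta_m,0)=J_{P_m}(\delta_m,0)=\delta_m$, the discrete series module with minimal $K$-type $m$, which is \emph{irreducible}. So the condition is simply that every composition factor of $\F|_{[1:0]}$ is isomorphic to $\delta_d$. Now $\F|_{[1:0]}=\F|_0$ is a fiber over a point of $\X_0$, where the family is the constant family $(\fg,K)$; by the description of the $K$-types in Table \ref{table1} (for $d>1$: $K$-types $d,d+2,d+4,\dots$, each with multiplicity one) the fiber $\F|_0$ is a Harish-Chandra module whose $K$-types are exactly those of $\delta_d$, each occurring once, and on which $\Omega$ acts by $d(d-2)$, the infinitesimal character of $\delta_d$. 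Any such $(\fg,K)$-module has $\delta_d$ as a submodule (generated by the lowest $K$-type, which generates a copy of $\delta_d$ by the explicit $\mathfrak{sl}_2$-action in \eqref{5}--\eqref{7}), and comparing $K$-type multiplicities forces $\F|_0\simeq\delta_d$; in particular it is irreducible and condition (4) holds trivially. Thus $\mathcal{M}(\g,\K)_d\subseteq\widetilde{\mathcal{M}}(\g,\K)_d$.

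The reverse inclusion $\widetilde{\mathcal{M}}(\g,\K)_d\subseteq\mathcal{M}(\g,\K)_d$ is then essentially by definition: a family in $\widetilde{\mathcal{M}}(\g,\K)_d$ is generically irreducible and generated by its $\mu=d$ isotypic part, hence lies in $\mathcal{M}(\g,\K)_d$. I expect the only genuinely delicate point to be the identification of the fiber $\F|_0$ in condition (4) — specifically, arguing cleanly from Table \ref{table1} that the $K$-type data plus the Casimir value pin down $\F|_0$ up to isomorphism as the discrete series module $\delta_d$ (as opposed to, say, a nontrivial extension), which is where the explicit action formulas \eqref{5}--\eqref{7} and the fact that $\delta_d$ is the \emph{unique} irreducible with these invariants get used; everything else is bookkeeping with the constant Casimir value and the triviality $\h_c\simeq O_\X$.
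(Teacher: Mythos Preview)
Your proof is correct and follows essentially the same route as the paper's: verify conditions (3) and (4) of Definition~\ref{def521} using the constant Casimir value from Table~\ref{table1}, the triviality $\h_c\simeq O_\X$, and the irreducibility of $I_{P_d}(\delta_d,0)=\delta_d$ to identify $\F|_0$. One tiny arithmetic slip: $\alpha^2-1=d(d-2)$ gives $\alpha^2=(d-1)^2$, not $d^2-2d$; the paper simply records $\alpha=\pm(d-1)$, but this does not affect your argument.
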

\begin{proof} For any such $d$ up to the action of the Picard group of $\X$  there is exactly one equivalence class in ${\mathcal{M}}(\g,\K)_{d}$.  For simplicity we shall assume that $1<d$, the other case is proven similarly.
Let $\F \in {\mathcal{M}}(\g,\K)_{d}$ then  the $K$-types  of $\F$ are $d,d+2,...$ and  $\Omega_0$ acts by multiplication by $d(d-2)$.  Since $I_{P_{d}}(\delta_{d},0)$ is irreducible  (discrete series) it is isomorphic to   $\F|_0$. The only thing that is left to check  is that $\F$ has an infinitesimal character with respect to $\fh_c$. That is we ask whether there is $\alpha\in \C$  such that 
\begin{eqnarray}\nonumber
&&\psi(\gamma_{c}(\Omega_0))=\chi(\Omega_0) \Longleftrightarrow \alpha^2-1=d(d-2)
\end{eqnarray}
which obviously hold since we can take $\alpha=\pm (d-1)$ .
\end{proof}

\subsubsection{The case of $|m|\leq1$}
If $m\in \Z$ with $|m| \leq 1$ then by Vogan's description  of the admissible  dual we can take  $P_m$ to be  the subgroups of $SL_2(\R)$ consisting of upper triangular matrices. $M_m=\{\pm \mathbb{I}\}$, $A_m$ is the subgroup of diagonal matrices in $SL_2(\R)$ with positive entries and     $N_m$ the subgroup of $P_m$ with all diagonal entries equal to 1. The representation $\delta_0$ is the trivial representation and the representations $\delta_{\pm1}$ are the sign representation (the irreducible non trivial representation of the two elements group). $I_{P_{0}}(\delta_{0},0)$ is an irreducible spherical principal series representation (on which  $\Omega_0|_{[1:0]}$ acts by multiplication by $-1$) and hence $I_{P_{0}}(\delta_{0},0)=J_{P_{0}}(\delta_{0},0)$. The representations $I_{P_{1}}(\delta_{1},0)$ and $I_{P_{-1}}(\delta_{-1},0)$ coincide and are equal to   the nonspherical principal series representation that is the direct sum of the two limit discrete series. 
$J_{P_{1}}(\delta_{1},0)$ is the limit discrete series with minimal $K$-type $1$ and $J_{P_{-1}}(\delta_{-1},0)$ is the limit discrete series with minimal $K$-type $-1$.   The $\Theta$-stable Cartan $\fh_m$ is  the Lie algebra of diagonal matrices in $\mathfrak{sl}_2(\C)$ which is the  complexification of the Lie algebra of $A_m$. Since  $\fh_m$ with $|m|\leq1$ arises from a split Cartan of $\mathfrak{sl}_2(\R)$  we also denote it by $\fh_s$. Similarly, we shall use the notation  $P_s=M_sA_sN_s$ for $P_m=M_mA_mN_m$, with $|m|\leq 1$. The subalgebras $\fn$ and $\overline{\fn}$ are upper and lower triangular matrices (respectively)  with zeros along the diagonal. Explicitly 
\begin{eqnarray}\nonumber
&&\fh_s=\C{H}^{s},\hspace{2mm} \fn=\C X^{s},\hspace{2mm} \overline{\fn}=\C Y^{s}
\end{eqnarray}
where
\begin{eqnarray}\nonumber
&&{H}^{s}= \left(\begin{matrix}
1& 0\\
0& -1
\end{matrix}\right)
,\hspace{2mm} {X}^s= \left(\begin{matrix}
0& 1\\
0& 0
\end{matrix}\right),\hspace{2mm} {Y}^s= \left(\begin{matrix}
0& 0\\
1& 0
\end{matrix}\right)
\end{eqnarray}
The canonical Casimir section can be written as 
\begin{eqnarray}\nonumber
\Omega_0=1\otimes \left((H^s)^2+2H^s+4Y^sX^s\right)
\end{eqnarray}
The corresponding $\rho_s\in \fh_s^*$ is given by $\rho_{s}({H}^s)=1$ and  the generalized Harish-Chandra homomorphism is determined by 
\begin{eqnarray}\nonumber
\gamma_{s}(\Omega_0)=1\otimes (({H}^s)^2-1)
\end{eqnarray}
 In particular  $\gamma_{s}(R^2\Omega_0)=R^2\otimes (({H}^s)^2-1)\in \Gamma(\X_{\infty},\mathcal{U}(\h_m))
$. Note that since $\fh_s\subset \fp$ then $\h_s\simeq O_{\X}(-1)$ so any morphism of sheaves of commutative $O_{\X}$-algebras    $\psi:\mathcal{U}(\h_s)\longrightarrow O_{\X}$ is given by $\psi(H^s_0)=\alpha_1 r+\alpha_0$ for some $\alpha_0,\alpha_1 \in \C$ where as before  $H^s_0:=1\otimes H_0\in \Gamma(\X_0,\h_s)$. 

\begin{proposition}\label{p633}
$\widetilde{\mathcal{M}}(\g,\K)_{0}$ consists of all $\F$ in  $ { {\mathcal{M}(\g,\K)}_{0}}$ that have $2\Z$ as their set of $K$-types and  $c(r)=c_2r^2-1
$ with $c_2\in \C$.\label{pr632}
\end{proposition}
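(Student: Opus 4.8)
The plan is to unwind Definition \ref{def521} in the explicit $SL_2(\R)$ setting, using Table \ref{table1} and the explicit generalized Harish-Chandra homomorphism $\gamma_s$ computed just above. So fix $m=0$. First I would note that condition (2), that $\F$ is generated by $\F_0$, together with generic irreducibility (condition (1)), forces $\F$ to lie in $\mathcal{M}(\g,\K)_0$; by Table \ref{table1} the only two possibilities for the $K$-type set of such an $\F$ are the full set $2\Z$ (with $c(r)=c_2r^2+c_1r+c_0$ not identically $k(k+2)$ for even $k\geq 0$) or the finite ``string'' $\{-k,-k+2,\dots,k\}$ with $c(r)\equiv k(k+2)$. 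I would rule out the finite-string case by condition (4): the fiber $\F|_{[1:0]}$ there is a finite-dimensional module, whose composition factors (the trivial or a finite-dimensional $SL_2(\R)$-module) are not among the composition factors of the spherical principal series $I_{P_0}(\delta_0,0)$, which is irreducible of Gelfand-Kirillov dimension $1$. Hence the $K$-type set must be $2\Z$.

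Next I would impose condition (3), the existence of an infinitesimal character with respect to $\fh_0=\fh_s$. Since $\fh_s\subset\fp$ we have $\h_s\simeq O_{\X}(-1)$, so any $O_{\X}$-algebra morphism $\psi:\mathcal{U}(\h_s)\to O_{\X}$ is given by $\psi(H^s_0)=\alpha_1 r+\alpha_0$, and the commuting triangle of Definition \ref{d4} becomes the single equation
\begin{equation}\nonumber
\psi(\gamma_s(\Omega_0)) = \chi_{\F}(\Omega_0),\qquad\text{i.e.}\qquad (\alpha_1 r+\alpha_0)^2-1 = c(r).
\end{equation}
Writing $c(r)=c_2r^2+c_1r+c_0$, existence of $\alpha_0,\alpha_1\in\C$ solving $(\alpha_1 r+\alpha_0)^2-1=c_2r^2+c_1r+c_0$ means $c(r)+1$ must be a perfect square of a linear polynomial: $c_2=\alpha_1^2$, $c_1=2\alpha_0\alpha_1$, $c_0+1=\alpha_0^2$, which holds exactly when $c_1^2=4c_2(c_0+1)$. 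I would then argue that up to the action of the Picard group of $\X$ we may normalize so that $c_1=0$ (a projective change of coordinate $r\mapsto ar+b$ is available because $[\alpha:\beta]=[1:0]$ and $[0:1]$ are the only distinguished points, and shifting $r$ shifts the polynomial), which reduces the perfect-square condition to $c_0=-1$, i.e.\ $c(r)=c_2r^2-1$. Conversely, for any $c_2\in\C$ the family with $K$-types $2\Z$ and Casimir $c_2r^2-1$ satisfies (1)--(4): generic irreducibility and generation by $\F_0$ are clear from Table \ref{table1} since $c_2r^2-1\not\equiv k(k+2)$ (the constant term $-1$ is never of that form); (3) holds by the computation above with $\alpha_1^2=c_2$, $\alpha_0=0$; and (4) holds because $\F|_{[1:0]}$ has Casimir eigenvalue $-1$, matching $I_{P_0}(\delta_0,0)$, and both have $K$-type set $2\Z$, so their composition factors agree.

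The step I expect to be the main obstacle is the normalization argument reducing $c_1$ to $0$ via the Picard-group action — one has to check carefully that the automorphisms of $\X$ fixing $[1:0]$ and $[0:1]$ act on the Casimir functions $c(r)$ by the affine substitutions $r\mapsto ar$ (scaling only, not translation, if both points are to be preserved), and then reconcile this with the claim that families are classified ``up to the action of the Picard group.'' If only scalings $r\mapsto ar$ are allowed, then $c_1$ cannot in general be killed, and the correct statement is rather that $c(r)+1$ is a square, equivalently $c_1^2=4c_2(c_0+1)$; I would check against the preceding discrete-series propositions and the intended downstream use to determine whether the paper means the normalized form $c(r)=c_2r^2-1$ literally (having already fixed representatives) or the invariant condition. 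Modulo this bookkeeping, the proof is the routine unwinding of the four conditions against Table \ref{table1} and the one-line equation $(\alpha_1r+\alpha_0)^2-1=c(r)$. The analogous statements for $m=\pm 1$ would be handled the same way, the only change being that the finite-string and (in the $m=\pm1$ case) the limit-discrete-series rows of Table \ref{table1} must also be tested against condition (4) with $I_{P_{\pm1}}(\delta_{\pm1},0)$, the sum of the two limit discrete series.
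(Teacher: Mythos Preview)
Your suspicion about the normalization step is correct: the Picard-group action on $\X$ fixing both $[1{:}0]$ and $[0{:}1]$ consists only of scalings $r\mapsto ar$, so it cannot kill $c_1$, and in any case $\widetilde{\mathcal{M}}(\g,\K)_0$ is a genuine class of families, not a class modulo reparametrization of the base. So the statement $c(r)=c_2r^2-1$ is meant literally, and your ``invariant'' condition $c_1^2=4c_2(c_0+1)$ is too weak.

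The gap is that you did not extract the full content of condition~(4). You used it only to rule out the finite $K$-type string, but it also pins down the constant term of $c(r)$. Indeed $I_{P_0}(\delta_0,0)$ is irreducible with Casimir eigenvalue $-1$, so every composition factor of $\F|_{[1:0]}$ must have Casimir eigenvalue $-1$; since $\Omega_0$ acts on $\F|_{[1:0]}$ by $c(0)=c_0$, this forces $c_0=-1$ outright. Now feed this into your infinitesimal-character equation
\[
(\alpha_1 r+\alpha_0)^2-1=c_2r^2+c_1r-1.
\]
Comparing constant terms gives $\alpha_0^2-1=-1$, hence $\alpha_0=0$; then the linear term gives $c_1=2\alpha_0\alpha_1=0$, and $\alpha_1^2=c_2$. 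This is exactly the paper's argument: condition~(4) is used first to fix $c_0=-1$, and only then does condition~(3) force $c_1=0$. No normalization is needed, and the order of the two conditions matters. Your converse direction is fine.
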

\begin{proof}
Let $\F\in \widetilde{\mathcal{M}}(\g,\K)_{0}$ then since $\F|_0\simeq I_{P_{0}}(\delta_{0},0)=J_{P_{0}}(\delta_{0},0)$ then  the set of $K$-types of $\F$  must be $2\Z$ and $c(r)$ must be of the form $c_2r^2+c_1r-1$. Since the family $\F$ has an infinitesimal character with respect to  $\h_s$ then $\psi(\gamma_{s}(\Omega_0))=\chi(\Omega_0) $ so there must be  $\alpha_0,\alpha_1\in \C$  such that 
\begin{eqnarray}\nonumber
&&(\alpha_1r+\alpha_0)^2-1=c_2r^2+c_1r-1\Longleftrightarrow \alpha_0=0, (\alpha_1)^2=c_2, c_1=0
\end{eqnarray}
so $c(r)=c_2r^2-1$.
\end{proof}

\begin{proposition}\label{p634}\noindent
\begin{enumerate}
\item $\widetilde{\mathcal{M}}(\g,\K)_{1}$ consists of all $\F$ in  $ { {\mathcal{M}(\g,\K)}_{1}}$  on which the Casimir acts by multiplication by  $c(r)=c_2r^2-1
$ with $c_2\in \C$ along with the following constrains on their $K$-types. If $c_2\neq 0$ the  set of $K$-types is 
$2\Z+1$ and if $c_2=0$ then the set of $K$-types  consists of all odd positive integers.
\item $\widetilde{\mathcal{M}}(\g,\K)_{-1}$ consists of all $\F$ in  $ { {\mathcal{M}(\g,\K)}_{-1}}$  on which the Casimir acts by multiplication by  $c(r)=c_2r^2-1
$ with $c_2\in \C$ along with the following constrains on their $K$-types. If $c_2\neq 0$ the  set of $K$-types is 
$2\Z+1$ and if $c_2=0$ then the set of $K$-types  consists of all odd negative integers.
\end{enumerate}
\end{proposition}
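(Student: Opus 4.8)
The plan is to follow exactly the template established by Propositions \ref{p632} and \ref{p633}, adapting it to the limit-of-discrete-series situation. First I would recall from Table \ref{table1} the list of all $\F\in{\mathcal{M}(\g,\K)}_{1}$: either the $K$-types are all of $2\Z+1$ and the Casimir acts by an arbitrary quadratic $c(r)=c_2r^2+c_1r+c_0$ not identically equal to $k(k+2)$ for odd $k\geq-1$; or the $K$-types are a finite string $-k,-k+2,\dots,k$ with Casimir $k(k+2)$; or the $K$-types are $1,3,5,\dots$ with Casimir $-1$ (the family whose generic fiber is the relevant irreducible and whose minimal $K$-type is $1$). Since $\F\in\widetilde{\mathcal{M}}(\g,\K)_{1}$ must be generically irreducible and generated by $\F_1$, the finite-string families are excluded, so only the first and third types survive.

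Next I would impose condition (4) of Definition \ref{def521}: the composition factors of $\F|_{[1:0]}$ must lie among those of $I_{P_1}(\delta_1,0)$, which was identified above as the nonspherical principal series that is the direct sum of the two limit discrete series, on which $\Omega_0|_{[1:0]}$ acts by $-1$. Since $\Omega_0$ acts on $\F$ by $c(r)$ and the fiber at $[1:0]$ corresponds to $r=0$, this forces $c_0=c(0)=-1$. So $c(r)=c_2r^2+c_1r-1$. Then I would impose condition (3), the existence of an infinitesimal character with respect to $\fh_1=\fh_s$: exactly as in Proposition \ref{p633}, since $\h_s\simeq O_{\X}(-1)$ any $O_{\X}$-algebra map $\psi:\mathcal{U}(\h_s)\to O_{\X}$ has $\psi(H^s_0)=\alpha_1 r+\alpha_0$, and the commuting-triangle condition $\psi(\gamma_s(\Omega_0))=\chi_{\F}(\Omega_0)$ reads $(\alpha_1 r+\alpha_0)^2-1=c_2r^2+c_1r-1$, which is solvable precisely when $\alpha_0=0$, $c_1=0$, $c_2=\alpha_1^2$ — and any $c_2\in\C$ is a perfect square in $\C$, so this cuts $c(r)$ down to $c_2r^2-1$ with no further restriction on $c_2$.

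Finally I would sort out the $K$-type bookkeeping by cases on whether $c_2=0$. If $c_2\neq 0$, then $c(r)=c_2r^2-1\not\equiv k(k+2)$ for any constant, so $\F$ lies in the first row of the ${\mathcal{M}(\g,\K)}_{1}$ block of Table \ref{table1} and its $K$-types are $2\Z+1$; one checks generic irreducibility holds and $\F_1$ generates (the minimal $K$-type is $1$ by construction of $\widetilde{\mathcal{M}}(\g,\K)_{1}$, $P_1$ being the split parabolic). If $c_2=0$, then $c(r)\equiv-1=(-1)(-1+2)=k(k+2)$ with $k=-1$ odd, and among families with Casimir $-1$ the only one in ${\mathcal{M}(\g,\K)}_{1}$ generated by its minimal $K$-type $1$ is the one with $K$-types $1,3,5,\dots$ (the limit discrete series $J_{P_1}(\delta_1,0)$ itself, which is generically irreducible as a constant family); the full-line family $2\Z+1$ with Casimir $-1$ is not generated by $\F_1$. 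The statement for $\widetilde{\mathcal{M}}(\g,\K)_{-1}$ is identical under the symmetry $H\mapsto -H$, $X\leftrightarrow Y$, $n\mapsto -n$, which swaps the two limit discrete series. The main obstacle — really the only subtle point — is the $c_2=0$ case, where one must argue carefully using the explicit bases \eqref{5}--\eqref{7} that the family of modules with $K$-types all of $2\Z+1$ and Casimir $-1$ fails to be generated by its degree-$1$ isotypic piece (the lowering operator $Y_0$ does not vanish going from $1$ to $-1$ when $c(r)\equiv-1$, so $f_1$ does not generate the negative $K$-types), whereas truncating to $1,3,5,\dots$ gives exactly the limit discrete series.
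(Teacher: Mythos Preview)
Your overall strategy matches the paper's: reduce to $c(r)=c_2r^2-1$ via conditions (3) and (4), then split on $c_2$. Two points need correction.

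First, a small slip: you say the finite-string families in Table~\ref{table1} are excluded because $\F$ must be generically irreducible and generated by $\F_1$. But Table~\ref{table1} \emph{is} the classification of $\mathcal{M}(\g,\K)_1$, whose very definition already requires generic irreducibility and generation by $\F_1$; the finite-string families satisfy both. They are ruled out only when you impose condition~(4), since their Casimir value $k(k+2)$ with $k\geq 1$ is never $-1$. This is harmless because your later step $c(0)=-1$ does eliminate them.

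Second, and this is the real gap: your justification in the $c_2=0$ case is backwards. You claim the putative family with $K$-types $2\Z+1$ and Casimir $\equiv -1$ ``fails to be generated by its degree-$1$ isotypic piece'' because ``$Y_0$ does not vanish going from $1$ to $-1$.'' But non-vanishing of $Y_0$ means $f_1$ \emph{does} generate $f_{-1}$ (and hence all negative $K$-types), so that family \emph{is} generated by $\F_1$. The correct reason it is absent from $\mathcal{M}(\g,\K)_1$ is failure of generic irreducibility: with $m=1$ and $c(r)\equiv -1$, formula~(6) gives $X_0 f_{-1}=\tfrac14\bigl(c(r)-(-1)(1)\bigr)f_1\equiv 0$, so the span of $\{f_{-1},f_{-3},\dots\}$ is a proper submodule in \emph{every} fiber. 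This is exactly the argument the paper gives (``from the explicit description of the action of $\g$ (equations \ref{5}--\ref{7}) we see that generic irreducibility forces $c_2$ to be nonzero''), just phrased from the other direction of the dichotomy. Once you fix this one sentence, your proof is essentially the paper's.
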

\begin{proof}
Let $\F\in \widetilde{\mathcal{M}}(\g,\K)_{1}$. Since  the collection of  composition factors of $\F|_0$  is contained in $\{ J_{P_{-1}}(\delta_{-1},0), J_{P_{1}}(\delta_{1},0)  \}$  then $c(0)=-1$ and the same calculation as in Proposition \ref{pr632} shows that $c(r)=c_2r^2-1$.   If $\F|_0\simeq J_{P_{1}}(\delta_{1},0)$ then as in the discrete series case $c_2$ must be zero. 
If $\F|_0\not\simeq J_{P_{1}}(\delta_{1},0)$ then  the set of $K$-types of $\F|_0$ (which is also the $K$-types of $\F$)  must be $2\Z+1$, since $I_{P_{1}}(\delta_{1},0)=J_{P_{-1}}(\delta_{-1},0)\oplus J_{P_{1}}(\delta_{1},0)$.  From the explicit description of the action of $\g$ (equations \ref{5}-\ref{7}) we see that generic irreducibility forces $c_2$ to be nonzero.
The second part of the proposition is proven similarly.
\end{proof}

\subsection{The admissible  duals}
In this section we briefly recall a classification of the admissible  dual of $SL_2(\R)$ and its Cartan motion group $SO(2)\ltimes \R^2$.

The sections $H_{\infty}=H_0=1\otimes H$, $X_{\infty}=RX_0:=R\otimes X$ and  $Y_{\infty}=RY_0:=R\otimes Y$ form a basis for  the Lie algebra $\Gamma(\X_{\infty},\g)$ over $\C[R]$. They satisfy the commutation relations 
\begin{eqnarray}
 && [H_{\infty},X_{\infty}]=2X_{\infty}, [H_{\infty},Y_{\infty}]=-2Y_{\infty}, [X_{\infty},Y_{\infty}]=R^2H_{\infty}
\end{eqnarray} 
The section 
\begin{eqnarray}\nonumber
\Omega_{\infty}:=R^2\Omega_0=R^2(H_{\infty}^2+2H_{\infty})+4Y_{\infty}X_{\infty}
\end{eqnarray} 
is a nonvanishing regular  section of the Casimir sheaf $\mathcal{C}$ over $\X_{\infty}$. In particular for every $R\in \C$ the image of  $\Omega_{\infty}$ in  the fiber $\mathcal{U}(\g|_R)$ is central and in fact it forms a basis for the free polynomial $\C$-algebra $\mathcal{Z}(\g|_R)$.  By essentially linear algebra, it can be shown that an irreducible admissible  $(\g|_R,K)$-module is completely determined by the action of $\Omega_{\infty}|_R$ (which is given by multiplication of a scalar) and a minimal $K$-type. Such calculations can be found for example in \cite[ch.1]{Vogan81}, \cite[sec. II. 1]{HoweTan} and \cite[ch. 8]{Taylor86}. 
 For  $R\neq 0$ it is more common to look on the action of the canonical Casimir $\Omega_0|_R=R^{-2}\Omega_{\infty}|_R$ instead of the action of $\Omega_{\infty}|_R$.  We denote by  $\mathcal{M}(\g|_R,K)$  the space of equivalence classes of irreducible admissible $(\g|_R,K)$-modules. For every $m\in \Z$ we denote by  $\mathcal{M}(\g|_R,K)_m$ the subspace of  $\mathcal{M}(\g|_R,K)$  consisting of all  modules having $m$ as a minimal $K$-type.  We introduce parameters for  $\mathcal{M}(\g|_R,K)_m$ as follows.
 For $R\neq0$ we shall denote by $(\omega,m)_R$ 
 the  (equivalence class of) irreducible admissible $(\g|_R,K)$-module on which $\Omega_0|_R=R^{-2}\Omega_{\infty}|_R$ acts by multiplication by $\omega $ and has minimal $K$-type $m$.
 For $R=0$ we shall denote by $(\omega,m)_0$ 
 the  (equivalence class of) irreducible admissible $(\g|_0,K)$-module on which $\Omega_{\infty}|_0$ acts by multiplication by $\omega $ and has minimal $K$-type $m$.
 We list all possibilities  for the case of   $R\neq 0$ corresponding to  $SL_2(\R)$  in Table \ref{table2} and  the case of   $R=0$ corresponding to  the Cartan motion group $SO(2)\ltimes \R^2$ in Table \ref{table3}.  Note that  the same $(\g|_R,K)$-module  can correspond to two different parameters. Two  different parameters   $(\alpha,m)_R$, $ (\beta,m')_R$  correspond to  isomorphic $(\g|_R,K)$-modules  if and only if $\{m,m'\}=\{1,-1\}$ and $\alpha=\beta$.  These are the only equivalences.

 Summarizing, for any  $R\in \C$ and   any $m\in \Z$ we can use the parameter $\omega$ in $(\omega,m)_R$ to  identify $\mathcal{M}(\g|_R,K)_m$ with a subset of $\C$. Explicitly, 
$$\mathcal{M}(\g|_R,K)_m\simeq \begin{cases} 
\C,& |m|>1\\
\{\text{point}\},& |m|\leq 1\end{cases}$$ 
Using this identification we can equip each $\mathcal{M}(\g|_R,K)_m$ with the structure of an affine algebraic variety (see remark \ref{r2} above). It should be stressed that $\mathcal{M}(\g|_R,K)_m$ has it own intrinsic structure as an affine algebraic variety and this structure coincides with the one we just obtained. We shall not prove these facts here.

\begin{table}[htbp]
\centering
\begin{tabular}{|c|c|c|c|}
\hline
   & \bf $K$-types &   {\bf Parameters}&  {\bf Parameters'}   \\ 
  &   &  $(R^{-2}\Omega_{\infty}|_R,m)_R$, $R\neq0$ &  {\bf domain}   \\ \hline
\multirow{ 3}{*}{$\mathcal{M}(\g|_R,K)_0$} & $2\Z$ & $(\omega,0)_R$ & $\omega\neq  k(k+2)$   for  \\
&&&  $0 \leq k\in \Z$  and even \\ \cline{2-4} 
   & $-k, -k{+}2,\dots , k$ &$( k(k+2),0)_R$ & $0\leq k\in \Z$ and  even \\  \cline{3-4} \hline
\multirow{ 4}{*}{$\mathcal{M}(\g|_R,K)_1$ } & $2\Z+1$ & $(\omega,1)_R$ & $\omega \neq k(k+2)$   for    \\
&&& $-1\leq k\in \Z$   odd \\ \cline{2-4} 
   & $-k, -k{+}2,\dots , k$ &$(k(k+2),1)_R$ & $0 \leq k\in \Z$  and  odd  \\  \cline{2-4} 
      & $1,3,5,\dots $ &$(-1,1)_R$ &  \\   \hline
\multirow{ 4}{*}{$\mathcal{M}(\g|_R,K)_{-1}$} & $2\Z+1$ & $(\omega,-1)_R$ & $\omega \neq  k(k+2)$   for    \\
&&& $-1\leq k\in \Z$   odd \\ \cline{2-4} 
   & $-k, -k{+}2,\dots , k$ &$(k(k+2),-1)_R$ & $0\leq k\in \Z$  and  odd  \\  \cline{2-4} 
      & $-1,-3,-5,\dots $ &$(-1,-1)_R$ &  \\   \hline
      $\mathcal{M}(\g|_R,K)_d$ & $d,d+2,d+4,\dots$ & $(d(d-2),d)_R$ & $1<d \in \Z$ \\ \hline
        $\mathcal{M}(\g|_R,K)_d$ & $d,d-2,d-4,\dots$ & $(d(d+2),d)_R$ & $-1>d \in \Z$ \\ \hline
\end{tabular}
\caption{The admissible dual of $SL_2(\R)$ by minimal $K$-types}
\label{table2}
\end{table}

\begin{table}[htbp]
\centering
\begin{tabular}{|c|c|c|c|}
\hline
   & \bf $K$-types &   {\bf Parameters}&  {\bf Parameters'}   \\ 
  &   &  $(\Omega_{\infty}|_0,m)_0$ &  {\bf domain}   \\ \hline
\multirow{ 2}{*}{$\mathcal{M}(\g|_0,K)_0$} & $2\Z$ & $(c,0)_0$ & $c\neq 0$     \\ \cline{2-4} 
   & $0$ &$(0,0)_0$&   \\  \cline{3-4} \hline
\multirow{ 2}{*}{$\mathcal{M}(\g|_0,K)_{1}$ } & $2\Z+1$ & $(c,1)_0$ & $c\neq 0$       \\  \cline{2-4} 
      & $1 $ &$(0,1)_0$ &  \\   \hline
\multirow{ 2}{*}{$\mathcal{M}(\g|_0,K)_{-1}$} & $2\Z+1$ & $(c,-1)_0$ & $c\neq 0$       \\ \cline{2-4} 
      & $-1 $ &$(-1,-1)_0$ &  \\   \hline
      $\mathcal{M}(\g|_0,K)_{d}$ & $d$ & $(0,d)_0$ & $1<d \in \Z$ \\ \hline
         $\mathcal{M}(\g|_0,K)_{d}$ & $d$ & $(0,d)_0$ & $-1>d \in \Z$ \\ \hline
\end{tabular}
\caption{The admissible dual of $ SO(2)\ltimes \R^2$ by minimal $K$-types}
\label{table3}
\end{table}

\subsection{The Jantzen quotients }
In this section following the calculations given in    \cite[sec. 5.2]{Ber2017}) we list all the Jantzen quotients for $\F$ in the various $\widetilde{\mathcal{M}}(\g,\K)_m$.
Let $\F\in \widetilde{\mathcal{M}}(\g,\K)_m$ on which  $\Omega_0$ acts  by multiplication by  $c(r)$. For any $K$-type $n$ of $\F$ we can find a non-vanishing   regular section $e_n$ such that $\Gamma(\X_{\infty},\F_n)=\C[R]e_n$ and the action of $\Gamma(\X_{\infty},\g)$  on $\Gamma(\X_{\infty},\F)$ is given by 
\begin{eqnarray}\nonumber
&&H_{\infty}e_n  =ne_{n} \\\nonumber
&&X_{\infty}e_n  =\begin{cases} 
 e_{n+2},& n+2\in I, m\leq n  \\
 \frac{R^2}{4}\left(c(R^{-1})-n(n+2) \right)e_{n+2},& n+2\in I, m>n 
\end{cases} \\\nonumber
&&Y_{\infty}e_{n} =\begin{cases} 
  \frac{R^2}{4}\left(c(R^{-1})-n(n-2) \right)e_{n-2},& n-2\in I, m< n  \\
 e_{n-2},& n-2\in I, m\geq n 
\end{cases} 
\end{eqnarray}
From these equations and the concrete expression for $c(r)$ which is either $c_2r^2-1$ or $m(m\pm2)$ (according to the value of $m$), we see that  $\F|_R$ is reducible for some $R\in \R$ if and only if $c(r)$ takes only real values on $\R$.   
From these formulas, doing the same calculations as in \cite[sec. 5.2]{Ber2017}), we obtain the following.
\begin{theorem}
For  $m\in \Z$,  $\F\in \widetilde{\mathcal{M}}(\g,\K)_{m}$  with Casimir action given by $c(r)=c_2r^2+c_0$, and any $[\alpha:\beta]\in \X_{\infty}$ we  have $$\widetilde{J}_{m,[\alpha:\beta]}(\mathcal{F})\simeq\begin{cases}
(\frac{c_2}{R^2}+c_0,m)_R & R\neq 0 \\
(c_2,m)_0 & R=0
\end{cases} $$
where $R=\frac{\alpha}{\beta}$.  \label{th4}
\end{theorem}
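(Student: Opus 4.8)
\textbf{Proof plan for Theorem \ref{th4}.}

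The plan is to reduce the computation of the Jantzen quotient to an explicit linear-algebra calculation on the fiber $\F|_R$, exactly in the spirit of \cite[sec. 5.2]{Ber2017}, and to keep careful track of which $K$-type $m$ lands in which quotient. First I would fix $\F\in \widetilde{\mathcal{M}}(\g,\K)_m$ with $\Omega_0$ acting by $c(r)=c_2r^2+c_0$ (Propositions \ref{p632}--\ref{p634} guarantee this is the only possible shape of $c$, with $c_0=-1$ when $|m|\le 1$ and $c(r)$ constant when $|m|>1$), and choose the regular sections $e_n$ over $\X_\infty$ with the action of $H_\infty,X_\infty,Y_\infty$ written out in the displayed formulas just before the theorem. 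Since the $\sigma$-twisted dual and the rational intertwining operator $\varphi:\F\to\F^\sigma$ exist precisely when $c(r)$ is real-valued on $\R\P^1$ (by \cite[prop. 3.5]{Ber2017}), I would first dispose of the case where $\F|_{[\alpha:\beta]}$ is irreducible (then $\widetilde J_{m,[\alpha:\beta]}(\F)=\F|_{[\alpha:\beta]}$ and the claimed formula is just the definition of the parameter $(\omega,m)_R$), and otherwise work with the canonical filtration $\{\F|_{[\alpha:\beta]}^n\}$ attached to $\varphi$.

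Next I would compute the Jantzen filtration fiberwise. Over $\X_\infty$ the relevant "Shapovalov-type" pairing is governed by the products $X_\infty Y_\infty$ and $Y_\infty X_\infty$ acting on each line $\C e_n$; from the displayed action these act by explicit quadratic polynomials in $R$ of the form $\tfrac{R^2}{4}\bigl(c(R^{-1})-n(n\mp2)\bigr)$. Specializing $R=\tfrac{\alpha}{\beta}$, the order of vanishing of the intertwining operator $\varphi$ along the divisor cutting out $[\alpha:\beta]$ is read off from these polynomials, and the associated graded pieces are the spans of those $e_n$ whose coefficient polynomials vanish to a common order. The key bookkeeping point is that the $K$-type $m$ is the minimal $K$-type and has multiplicity one (as recorded in Section \ref{Cor} right before the statement of $J_{\mu,[\alpha:\beta]}$), so $e_m$ lies in exactly one graded piece; that graded piece, as a $(\g|_R,K)$-module, is irreducible and is identified by its minimal $K$-type $m$ together with the scalar by which the (appropriate normalization of the) Casimir acts on it. For $R\ne0$ that scalar is $\Omega_0|_R=R^{-2}\Omega_\infty|_R$ evaluated at $c(R^{-1})=c_2R^{-2}+c_0=\tfrac{c_2}{R^2}+c_0$, giving the parameter $(\tfrac{c_2}{R^2}+c_0,m)_R$; for $R=0$ one instead uses $\Omega_\infty|_0$, which from $\Omega_\infty=R^2(H_\infty^2+2H_\infty)+4Y_\infty X_\infty$ and the displayed formulas acts on the relevant quotient by the limiting value $c_2$, giving $(c_2,m)_0$. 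This matches the entries of Tables \ref{table2} and \ref{table3}.

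I would organize the case analysis by the value of $m$: for $|m|>1$ (Proposition \ref{p632}) $c$ is the constant $d(d\mp2)$, so $c_2=0$, $\F|_R$ is the corresponding discrete series for every $R\ne0$ and is irreducible, and at $R=0$ it degenerates to the one-dimensional-minimal-$K$-type module $(0,d)_0$ — so the formula reads off directly; for $m=0$ and $m=\pm1$ (Propositions \ref{p633}, \ref{p634}) $c_0=-1$ and the reducibility loci in $R$ are exactly the real points where $\tfrac{c_2}{R^2}-1$ hits a value $k(k+2)$, and one checks the unique Jantzen quotient containing $\mu$ is the irreducible module with that Casimir scalar. Because these are precisely the computations carried out in \cite[sec. 5.2]{Ber2017} for the contraction family — the only change being that our Casimir polynomial is $c_2r^2+c_0$ rather than $c_1r+c_0+c_{-1}r^{-1}$ — I expect each individual case to be routine once the dictionary is set up. The main obstacle, and the step deserving the most care, is verifying that the graded piece of the Jantzen filtration containing the minimal $K$-type $m$ is genuinely irreducible as a $(\g|_R,K)$-module (so that $\widetilde J_{m,[\alpha:\beta]}(\F)$ really is a single module rather than a length-$\ge2$ quotient) and that its isomorphism class is correctly pinned down by the pair (minimal $K$-type, Casimir scalar); this is where the multiplicity-one property of $\mu$ and the classification of irreducible $(\g|_R,K)$-modules by $K$-type and central character (Tables \ref{table2}, \ref{table3}) do the real work, and where I would spend the bulk of the argument.
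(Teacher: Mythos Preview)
Your proposal is correct and follows essentially the same approach as the paper: the paper's own proof is little more than the sentence ``from these formulas, doing the same calculations as in \cite[sec.~5.2]{Ber2017}, we obtain the following,'' and your plan is a faithful expansion of what those calculations entail --- explicit action of $H_\infty,X_\infty,Y_\infty$ on the $e_n$, reading off the Jantzen filtration from the vanishing orders of the coefficient polynomials $\tfrac{R^2}{4}(c(R^{-1})-n(n\mp2))$, and identifying the graded piece containing $m$ by its minimal $K$-type and Casimir scalar. Your emphasis on verifying irreducibility of the relevant Jantzen quotient is exactly the point the paper singles out immediately after the theorem.
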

\noindent Note that in particular the Jantzen quotients are irreducible and as a result we obtain the following corollary. 
\begin{corollary}
Conjecture \ref{conj2} holds for $SL_2(\R)$.
\end{corollary}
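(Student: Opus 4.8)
The plan is to deduce the corollary straight from Theorem \ref{th4}, using the elementary remark already recorded at the end of Section \ref{s5.4}: an irreducible Jantzen quotient carrying the $K$-type $m$ must coincide with $J_{m,[\alpha:\beta]}(\mathcal{F})$. Indeed, $J_{m,[\alpha:\beta]}(\mathcal{F})$ is by construction the unique composition factor of $\mathcal{F}|_{[\alpha:\beta]}$ containing $m$, the uniqueness coming from the fact that $m$ occurs with multiplicity one in every fiber $\mathcal{F}|_{[\alpha:\beta]}$ of a family $\mathcal{F}\in\widetilde{\mathcal{M}}(\g,\K)_{m}$. Any Jantzen quotient is a subquotient of $\mathcal{F}|_{[\alpha:\beta]}$, so its composition factors lie among those of $\mathcal{F}|_{[\alpha:\beta]}$; hence if such a quotient is irreducible and contains $m$, it is forced to be exactly that one composition factor, i.e. $J_{m,[\alpha:\beta]}(\mathcal{F})$. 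Thus it suffices to show that the Jantzen quotient $\widetilde{J}_{m,[\alpha:\beta]}(\mathcal{F})$ singled out by $m$ is irreducible whenever $\mathcal{F}|_{[\alpha:\beta]}$ is reducible.

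With this reduction in hand, the first step is to treat the points $[\alpha:\beta]\in\R\P^1$ lying in the chart $\X_\infty$, that is, all of $\R\P^1$ except $[1:0]$. By Propositions \ref{p632}, \ref{p633} and \ref{p634} the canonical Casimir section acts on any $\mathcal{F}\in\widetilde{\mathcal{M}}(\g,\K)_{m}$ by multiplication by a polynomial of the form $c(r)=c_2r^2+c_0$, so Theorem \ref{th4} applies and identifies $\widetilde{J}_{m,[\alpha:\beta]}(\mathcal{F})$ with the module $(c_2/R^2+c_0,m)_R$ for $R=\alpha/\beta\neq 0$ and with $(c_2,m)_0$ for $R=0$. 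Reading off Tables \ref{table2} and \ref{table3} shows that each such module is an irreducible admissible module for the relevant fiber, so the reduction of the previous paragraph gives $\widetilde{J}_{m,[\alpha:\beta]}(\mathcal{F})\simeq J_{m,[\alpha:\beta]}(\mathcal{F})$ at every such point where $\mathcal{F}|_{[\alpha:\beta]}$ is reducible.

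It remains to deal with the single point $[1:0]$, and this is where I expect the main difficulty. By Propositions \ref{p632}--\ref{p634} the fiber $\mathcal{F}|_{[1:0]}$ is already irreducible in every case except $m=\pm1$ with $c_2\neq 0$, in which case it equals $I_{P_{\pm1}}(\delta_{\pm1},0)$, the direct sum of the two limit discrete series; only in that situation does Conjecture \ref{conj2} have any content at $[1:0]$. Since reducibility of some fiber over $\R\P^1$ forces the Casimir function $c(r)$ to take only real values there, one has $c_2>0$, so a nonzero rational intertwining operator $\mathcal{F}\to\mathcal{F}^{\sigma}$ exists and the Jantzen filtration of $\mathcal{F}|_{[1:0]}$ is defined. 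I would then rerun the computation of \cite[sec. 5.2]{Ber2017} in the coordinate $r$ near $r=0$ (rather than in $R$ near $R=0$, as in Theorem \ref{th4}) and check that the Jantzen quotient containing $m=\pm1$ is the irreducible limit discrete series $J_{P_{\pm1}}(\delta_{\pm1},0)=J_{\pm1,[1:0]}(\mathcal{F})$. The content of this last step is precisely that at $[1:0]$ -- the one point where the generically irreducible family genuinely degenerates into a sum of two limit discrete series -- the Jantzen bookkeeping in the other coordinate chart still isolates an irreducible quotient; everything else is a direct application of Theorem \ref{th4} and the multiplicity-one property of $m$. Combining the two steps yields Conjecture \ref{conj2} for $SL_2(\R)$.
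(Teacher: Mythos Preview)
Your core strategy is exactly the paper's: Theorem \ref{th4} exhibits each $\widetilde{J}_{m,[\alpha:\beta]}(\mathcal{F})$ as an irreducible admissible module, and the observation at the end of Section \ref{s5.4} then forces $\widetilde{J}_{m,[\alpha:\beta]}(\mathcal{F})\simeq J_{m,[\alpha:\beta]}(\mathcal{F})$. The paper's one-line proof stops there; you go further and notice that Theorem \ref{th4} is only stated for $[\alpha:\beta]\in\X_\infty$, so the point $[1:0]$ is not literally covered, and you correctly isolate $m=\pm1$ with $c_2\neq 0$ as the only case where $\mathcal{F}|_{[1:0]}$ is reducible.

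Two small corrections to your sketch of the $[1:0]$ case. First, $\mathcal{F}|_{[1:0]}$ is not $I_{P_{\pm1}}(\delta_{\pm1},0)$ itself: from formulas (\ref{5})--(\ref{7}) at $r=0$ with $m=1$ one has $Y_0f_1=f_{-1}$ but $X_0f_{-1}=\tfrac14(c(0)+1)f_1=0$, so $\mathcal{F}|_{[1:0]}$ is a non-split extension of the two limit discrete series rather than their direct sum. Definition \ref{def521}(4) only constrains the composition factors, not the module itself. Second, reducibility at $[1:0]$ alone does not give $c_2>0$; what you need (and what suffices for the intertwiner $\mathcal{F}\to\mathcal{F}^{\sigma}$ to exist) is $c_2\in\R$. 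With those fixes your proposed recomputation in the $r$-chart goes through: the intertwiner has the same order of vanishing at $r=0$ on every $\mathcal{F}_n$ with $n\geq 1$ and strictly higher order on every $\mathcal{F}_n$ with $n\leq -1$, so the Jantzen quotient containing $m=1$ is the irreducible quotient $J_{P_1}(\delta_1,0)$, as required.
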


\subsection{The Mackey-Higson-bijection for $SL_2(\R)$.} 

\begin{theorem}
Conjecture \ref{conj} holds for $SL_2(\R)$.
\end{theorem}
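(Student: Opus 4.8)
The plan is to make every ingredient of the correspondence completely explicit, using the classification of the classes $\widetilde{\mathcal M}(\g,\K)_m$ (Propositions \ref{p632}, \ref{p633}, \ref{p634}), the computation of the Jantzen quotients in Theorem \ref{th4}, and the corollary — already established — that Conjecture \ref{conj2} holds for $SL_2(\R)$, so that $J_{m,[\alpha:\beta]}(\F)=\widetilde J_{m,[\alpha:\beta]}(\F)$ at every point of $\R\P^1$. Fix $[\alpha:\beta]\in\R\P^1$ with $\alpha\beta\neq 0$ and set $r_0=\beta/\alpha\in\R\setminus\{0\}$, $R_0=\alpha/\beta=r_0^{-1}$. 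By the cited propositions every $\F\in\widetilde{\mathcal M}(\g,\K)_m$ acts on the Casimir by a polynomial $c(r)=c_2r^2+c_0(m)$ in which $c_0(m)$ depends only on $m$ ($c_0(m)=-1$ for $|m|\le 1$; $c_0(m)=|m|(|m|-2)$ and $c_2=0$ forced for $|m|>1$), and $\F$ is determined by the scalar $c_2\in\C$ up to the action of the Picard group of $\X$, which changes no fibre. Theorem \ref{th4} then gives, for our $[\alpha:\beta]$,
\[
J_{m,[\alpha:\beta]}(\F)\;\simeq\;\bigl(c_2r_0^{2}+c_0(m),\,m\bigr)_{R_0},\qquad J_{m,[0:1]}(\F)\;\simeq\;(c_2,m)_0,
\]
in the notation of Tables \ref{table2} and \ref{table3}, because on the generic fibre $\Omega_0$ restricts to the standard Casimir of $\mathfrak{sl}_2(\C)$.

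First I would show that $J_{\mu,[\alpha:\beta]}(\F)\longleftrightarrow J_{\mu,[0:1]}(\F)$ is the graph of a bijection. Given $\pi\in\widehat{G^{\sigma}}_m$, write $\pi=(\omega,m)_{R_0}$; the family in $\widetilde{\mathcal M}(\g,\K)_m$ realising $\pi$ as $J_{m,[\alpha:\beta]}(\F)$ is the unique one with $c_2=(\omega-c_0(m))/r_0^{2}$, and it exists because for $|m|\le 1$ every $\omega\in\C$ occurs while for $|m|>1$ the set $\widehat{G^{\sigma}}_m$ is a single point at which $\omega=c_0(m)$ and $c_2=0$. Hence the correspondence defines, for each $m$, the map $\pi\mapsto\eta:=\bigl((\omega-c_0(m))/r_0^{2},m\bigr)_0$ from $\widehat{G^{\sigma}}_m$ to $\widehat{K^{\sigma}\ltimes\fp^{\sigma}}_m$, and a row-by-row comparison of Tables \ref{table2} and \ref{table3} shows it is a bijection for each $m$. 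The one point requiring care is the overlap of $\widehat{G^{\sigma}}_1$ and $\widehat{G^{\sigma}}_{-1}$ (the odd principal series and the odd finite-dimensional representations carry both $1$ and $-1$ as minimal $K$-types): there one uses that $(c_2,1)_0\simeq(c_2,-1)_0$ holds at exactly the parameter values where this coincidence already occurs on the $SL_2(\R)$-side (the identification of parameters recorded in the discussion of Tables \ref{table2} and \ref{table3}), so the two partial maps agree on the overlap and glue to a single bijection $\widehat{G^{\sigma}}^{\text{adms}}\to\widehat{K^{\sigma}\ltimes\fp^{\sigma}}^{\text{adms}}$.

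Next I would verify the three properties. On the piece with minimal $K$-type $m$ the bijection is, in the Casimir parameter, $\omega\mapsto(\omega-c_0(m))/r_0^{2}$: an invertible affine map $\C\to\C$ for $|m|\le 1$, an isomorphism of points for $|m|>1$. Since the affine-variety structures on $\widehat{G^{\sigma}}_m$ and $\widehat{K^{\sigma}\ltimes\fp^{\sigma}}_m$ are precisely those carried by these Casimir parameters, this is an isomorphism of affine varieties, giving property (1). For property (2), take $\pi=J_{P_m}(\delta_m,0)$, the tempered representation with real infinitesimal character and lowest $K$-type $m$ — the image of $m$ under the inverse of Vogan's bijection (Theorem \ref{th3}); its Casimir eigenvalue equals $c_0(m)$, so $c_2=0$ and $\eta=(0,m)_0$, which is exactly the character $m$ of $K^{\sigma}=SO(2)$ sitting inside $\widehat{K^{\sigma}\ltimes\fp^{\sigma}}$, so the bijection restricts to Vogan's. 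For property (3), the tempered members of $\widehat{G^{\sigma}}_m$ are the discrete series when $|m|>1$, and when $|m|\le 1$ the representations with $\omega\in(-\infty,-1]$ (unitary principal series and limits of discrete series); since $r_0^2>0$, the map $\omega\mapsto(\omega+1)/r_0^{2}$ carries $(-\infty,-1]$ onto $(-\infty,0]$, the tempered range on the motion-group side, and carries $\omega=c_0(m)$ to $c_2=0$, so tempered goes to tempered (in fact the bijection restricts to a bijection of tempered duals).

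The hard analytic work — the classification of the families and the Jantzen computation — is already in place, so what remains is organisational: the row-by-row matching of Tables \ref{table2} and \ref{table3}, the handling of the $\{1,-1\}$ overlap, and the verification (flagged but not carried out in the discussion after Table \ref{table3}) that the intrinsic affine-variety structure on each $\widehat{G^{\sigma}}_m$ and $\widehat{K^{\sigma}\ltimes\fp^{\sigma}}_m$ coincides with the one given by the Casimir parameter. I expect that last identification of algebraic structures, together with the careful treatment of the reducibility points, to be the main obstacle; everything else is direct comparison of the explicit lists.
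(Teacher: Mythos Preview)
Your proposal is correct and follows essentially the same route as the paper's proof. Both arguments feed the explicit classifications of $\widetilde{\mathcal M}(\g,\K)_m$ (Propositions \ref{p632}--\ref{p634}) into the Jantzen computation of Theorem \ref{th4}, read off the resulting affine map in the Casimir parameter, check well-definedness on the $\{1,-1\}$ overlap, and then verify the three properties of Conjecture \ref{conj} by direct inspection of the tempered ranges and of the values at $c_2=0$; the only cosmetic difference is that the paper writes the bijection as $\eta_m^R:(z,m)_0\mapsto(\tfrac{z}{R^2}+c_0,m)_R$ from the motion-group side to the $SL_2(\R)$ side, whereas you write its inverse $\omega\mapsto(\omega-c_0(m))/r_0^{2}$, and the paper likewise leaves the identification of the intrinsic affine-variety structure with the Casimir-parameter structure unproved.
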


\begin{proof}
Since $\K$ is a constant family and since the Casimir of $SL_2(\R)$ is canonically defined regardless of any  concrete realization,  for any nonzero $R_1,R_2\in \X_{\infty}$ and any $m\in \Z$   there is a canonical identification   ${\mathcal{M}(\g|_{R_1},\K)}_{m}\simeq {\mathcal{M}(\g|_{R_2},\K)}_{m}$.
For each $0\neq R\in \X_{\infty}$ and  $m\in \Z$ the correspondence arising from Theorem \ref{th4} gives rise to a well-defined isomorphism of affine algebraic varieties  $\eta_m^R: { {\mathcal{M}(\g|_0,K)}_{m}}\longrightarrow  { {\mathcal{M}(\g|_R,\K)}_{m}}$, given by $\eta_m^R\left((z,m)_0\right)=(\frac{z}{R^2}+c_0,m)_R$.  We note that we can define a map  $\eta^R$  from  ${ {\mathcal{M}(\g|_0,K)}}$ into   ${ {\mathcal{M}(\g|_R,\K)}}$  by sending $(z,m)_0$ to $\eta_m^R\left((z,m)_0\right)=(\frac{z}{R^2}+c_0,m)_R$. Then  $\eta^R:=\Sigma_{m\in \Z} \eta_m^R$ is a well-defined bijection  from  ${ {\mathcal{M}(\g|_0,K)}}$ onto   ${ {\mathcal{M}(\g|_R,\K)}}$. It is well-defined since  $(z,m)_0\simeq  (z',m')_0$ if and only if  either $z=z'$ and $m=m'$ or $z=z'$ and $\{m,m'\}=\{-1,1\}$. In both cases  $\eta_m^R\left((z,m)_0\right )\simeq  \eta_{m'}^R\left((z',m')_0\right)$.  Since $$\eta_m^R\left((0,m)_0\right )=\begin{cases}
(-1,0)_R, & m=0\\
(-1,1)_R, & m=1\\
(-1,-1)_R, & m=-1\\
(m(m-2),m)_R, & m>1\\
(m(m+2),m)_R, & m<-1
\end{cases}
$$ 
it extends Vogan's bijection. Since the tempered dual of $(\g|_0,K)$ is given by $\{(0,m)_0|m\in \Z \}\cup \{ (c,\pm 1)_0| c<0\}\cup \{ (c,0)_0| c<0\}$  and the tempered dual of
$(\g|_R,K)$ is given by $\{(d(d-2),d)_R|0<d\in \Z \}\cup \{(d(d+2),d)_R|0>d\in \Z \} \cup \{ (\omega,\pm 1)_0| \omega < -1\}\cup \{ (\omega,0)_0| \omega \leq -1\}$
then $\eta^R$ takes  the tempered to tempered.  
\end{proof}
\begin{theorem}
Any bijection  between $\widehat{SL_2(\R)}^{\text{adms}}$ and $\widehat{SO(2)\ltimes \R^2}^{\text{adms}}$ satisfying the  three  conditions  in Conjecture \ref{conj}  arises form the correspondence  $J_{\mu,[\alpha:\beta]}(\mathcal{F}) \longleftrightarrow J_{\mu,[0:1]}(\mathcal{F})$ for some $[\alpha:\beta]\in \R\P^1$ with $\alpha \beta \neq 0$.
\end{theorem}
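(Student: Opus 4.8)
The strategy is to analyze directly what the three conditions of Conjecture~\ref{conj} force, using the explicit classification of $\widehat{SL_2(\R)}^{\text{adms}}$ and $\widehat{SO(2)\ltimes\R^2}^{\text{adms}}$ by minimal $K$-types recorded in Tables~\ref{table2} and~\ref{table3}. The key observation is that condition~(1) decomposes the bijection into a family of isomorphisms of affine algebraic varieties $\eta_m\colon \widehat{SO(2)\ltimes\R^2}^{\text{adms}}_m\to \widehat{SL_2(\R)}^{\text{adms}}_m$, one for each $m\in\Z=\widehat{K}$, and that for $|m|\le 1$ the source (and target) is a single point, so $\eta_m$ is determined with no freedom at all. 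The only real content is therefore in the discrete-series ranges $|m|>1$, where both varieties are copies of $\C$ (via the parameter $\omega$ as in the tables), so $\eta_m$ is an automorphism of the affine line $\C$, i.e.\ $\eta_m(\omega)=a_m\omega+b_m$ with $a_m\in\C^*$, $b_m\in\C$.

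First I would fix, for each $|m|>1$, the coordinates: on the Cartan motion side $\mathcal{M}(\g|_0,K)_m$ is parametrized by $(\omega,m)_0$ with $\omega$ the scalar by which $\Omega_\infty|_0$ acts (and from Table~\ref{table3} the only tempered point in this component is $(0,m)_0$, the contraction of the discrete series); on the $SL_2(\R)$ side $\mathcal{M}(\g|_R,K)_m$ is parametrized by $(\omega,m)_R$ and the discrete series sits at $\omega=d(d-2)$ for $d>1$, resp.\ $d(d+2)$ for $d<-1$. Condition~(2), that the bijection extends Vogan's bijection, pins down $\eta_m(0)$: it must be the discrete series $J_{P_m}(\delta_m,0)$, forcing $b_m=m(m-2)$ for $m>1$ and $b_m=m(m+2)$ for $m<-1$. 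Next, I would use condition~(3) together with the explicit description of the tempered duals (used already in the proof of Theorem~5): the tempered points in $\mathcal{M}(\g|_0,K)_m$ with $|m|>1$ form the single point $\{0\}$, while in $\mathcal{M}(\g|_R,K)_m$ they form the single point $\{m(m\mp2)\}$, so condition~(3) gives no new information on $a_m$ for $|m|>1$. The constraint on $a_m$ must instead come from the requirement that $\eta_m$ arise from a single global family $\F\in\widetilde{\mathcal{M}}(\g,\K)_m$ — equivalently, from Theorem~\ref{th4}, which says that for $\F$ with Casimir action $c(r)=c_2r^2+c_0$ the fiber correspondence is $(c_2,m)_0\longleftrightarrow(c_2/R^2+c_0,m)_R$, i.e.\ scaling by $1/R^2$ and shift by $c_0$. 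Comparing, the map induced on the $m$-component by the family-correspondence at the parameter $[\alpha:\beta]$ with $R=\alpha/\beta$ is exactly $\eta_m(\omega)=\omega/R^2 + c_0$ where $c_0$ is the (forced) constant term of the Casimir action, namely $c_0=-1$ for $|m|\le 1$ and $c_0=m(m\mp 2)$ for $|m|>1$ by Propositions~\ref{p632}--\ref{p634} and the discrete-series computation. Hence $a_m=1/R^2$ for \emph{every} $m$ simultaneously.

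The heart of the argument, then, is to show that an abstract bijection satisfying~(1)--(3) \emph{must} have $a_m=a_{m'}$ for all $m,m'$, i.e.\ the scaling factor is independent of $m$, so that it equals $1/R^2$ for a common $R$ and the bijection coincides with the one coming from $[\alpha:\beta]=[\,1:1/\!\sqrt{a}\,]$ (any square root; since we work over $\R\P^1$ one must check $a_m>0$, which follows because condition~(3) forces the tempered-to-tempered match to be order-compatible along the real parameter line — the reducibility locus is the real line, and a negative scaling would send a connected real component to the wrong side). To get the $m$-independence I would exploit that the varieties $\widehat{SL_2(\R)}^{\text{adms}}_m$ are not disjoint as subsets of $\widehat{SL_2(\R)}^{\text{adms}}$: the non-tempered (finite-dimensional and non-spherical reducibility) points with $K$-type ladders $-k,\dots,k$ appear as \emph{minimal}-$K$-type points only for the bottom/top of the ladder, but the composition-series structure ties together the parameters across different $m$. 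Concretely, a reducible principal series at parameter $\omega=k(k+2)$ has the same infinitesimal character whether approached from the $m=0$ or the $m=\pm1$ component, and the requirement that the whole correspondence come from families $\F$ whose fibers degenerate consistently (the $J_{\mu,[\alpha:\beta]}$ are composition factors of a single $\F|_{[\alpha:\beta]}$) forces the affine parameter on overlapping components to transform by the \emph{same} affine map — this is where I expect the main obstacle to lie, since it requires a careful bookkeeping of how the finitely many non-generic fibers of families in $\widetilde{\mathcal{M}}(\g,\K)_m$ are shared, essentially re-deriving the gluing already implicit in the proof of Theorem~5. Once $a_m\equiv 1/R^2$ is established, the bijection agrees on every $K$-type component with the correspondence $J_{\mu,[\alpha:\beta]}(\F)\longleftrightarrow J_{\mu,[0:1]}(\F)$ for $[\alpha:\beta]=[\sqrt a:1]\in\R\P^1$, $\alpha\beta\ne 0$, which is the claim.
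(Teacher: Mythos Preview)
Your proposal rests on an inverted reading of the classification. For $|m|>1$ the varieties $\widehat{SL_2(\R)}^{\text{adms}}_m$ and $\widehat{SO(2)\ltimes\R^2}^{\text{adms}}_m$ are each a \emph{single point} (Tables~\ref{table2} and~\ref{table3}: the only entries are $(d(d\mp2),d)_R$ and $(0,d)_0$), while for $|m|\le 1$ both are copies of $\C$ parametrized by the Casimir eigenvalue $\omega$. The displayed formula immediately following the tables has the two cases swapped; the tables themselves and the paper's proof (``For any $d\in\Z$ with $|d|>1$ \dots\ consist of exactly one equivalence class'') show which way it goes. As a result your entire analysis is aimed at the wrong strata: the affine maps $\omega\mapsto a_m\omega+b_m$ that have to be pinned down live at $m\in\{-1,0,1\}$, and for $|m|>1$ there is nothing to prove because a bijection between singletons is forced.

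Once the argument is relocated to $|m|\le 1$, conditions~(2) and~(3) do the work directly. Vogan's bijection sends $(0,m)_0$ to $(-1,m)_R$ for each $m\in\{-1,0,1\}$, so $b_m=-1$. The tempered locus in $\mathcal{M}(\g|_0,K)_m$ is the real half-line $(-\infty,0]$ and in $\mathcal{M}(\g|_R,K)_m$ it is $(-\infty,-1]$, so tempered-preservation forces $a_m$ real and positive, hence $a_m=1/R_m^2$ for some $R_m>0$. This is precisely the paper's route. Your alternative device for constraining $a_m$ --- invoking ``the requirement that $\eta_m$ arise from a single global family $\F$'' and Theorem~\ref{th4} --- is not legitimate: the hypothesis of the theorem is only that the abstract bijection satisfies (1)--(3), with no assumption that it comes from any family, so Theorem~\ref{th4} cannot be used as a constraint. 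Likewise, your proposed mechanism for forcing $a_m=a_{m'}$ via ``overlapping components'' through shared composition factors does not apply between $m=0$ and $m=\pm1$: those strata have disjoint $K$-type parities, so no representation lies in both. (The paper handles $m=\pm1$ with a terse ``Similarly''; the compatibility $a_1=a_{-1}$ is genuinely forced by the identification $(z,1)\simeq(z,-1)$, but the argument does not extract a further relation linking $a_0$ to $a_{\pm1}$.)
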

\begin{proof} 
Let $\Psi$ be  a bijection from  ${ {\mathcal{M}(\g|_0,K)}}$ onto   ${ {\mathcal{M}(\g|_1,\K)}}$ (the choice of  $R=1$ is not important) satisfying the  three  conditions in Conjecture  \ref{conj}. For any $d\in \Z$ with $|d|>1$,   ${ {\mathcal{M}(\g|_0,K)}}_d$ and   ${ {\mathcal{M}(\g|_1,\K)}}_d$ consist of exactly one equivalence class  and $\Psi$ must agree with any of the $\eta^R$ when restricted to  ${ {\mathcal{M}(\g|_0,K)}}_d$.  Now since $\Psi$ restricts to an algebraic isomorphism from   ${ {\mathcal{M}(\g|_0,\K)}}_0$ onto ${ {\mathcal{M}(\g|_1,\K)}}_0$  it must be of the form $\Psi((z,0)_0)=(\alpha z+\beta,0)_1$ for some $\alpha,\beta \in \C$ with $\alpha\neq 0$. Since $\Psi$ extends Vogan's bijection then  $\Psi((0,0)_0)=(-1,0)_1$ and hence  $\beta=-1$. Since $\Psi $ takes tempered modules to tempered modules it must map $\{(z,0)_0|z\in (-\infty,0]\}$ into $\{(z,0)_1|z\in (-\infty,-1]\}$ hence $\alpha >0$ and $\Psi$ coincides with $\eta^{\frac{1}{\sqrt{\alpha}}}$ on ${ {\mathcal{M}(\g|_0,\K)}}_0$. Similarly, it also coincide with it on ${ {\mathcal{M}(\g|_0,\K)}}_{\pm1}$.  
\end{proof}

\subsection{Comparison with the known bijection for the tempered dual}
In this section we compare our results in the case of  $SL_2(\R)$ to previous works. We explain how our tools can be used to shed new light on former  constructions of the Mackey bijection.

The bijection in the case of $SL_2(\R)$  for the tempered dual was considered before, see  \cite{George,Afgoustidis15,Qijun}, and in fact most of it was already given by Mackey  \cite{Mackey75}. Related calculations where done in \cite{Dooley83,Dooley85}. The bijection in these references agrees with the restriction of  our $\eta^{R=1}$  to the tempered dual. Our construction of the bijection is based on certain algebraic families of Harish-Chandra modules.

 In  \cite{Afgoustidis15} the bijection is defined with no reference to  families but  families are a central part of the story. More precisely   it is shown that for most $\pi_1\in\widehat{SL_2(\R)}^{\text{temp}}$ and $\pi_0\in \widehat{SO(2)\ltimes \R^2}^{\text{temp}}$ that correspond to each other under the Mackey bijection there is a family of representations $\{\pi_t\}_{t\in [0,\infty)}$ that extends $\pi_0$ and $\pi_1$ and,  in a certain sense, deform  the underlying Hilbert spaces of $\pi_0$ and $\pi_1$. This description is in agreement with \cite{Qijun} where the bijection is constructed via a family of D-modules on the flag variety of $SL_2(\R)$. We shall now write these families of representations in terms of parabolically induced representations and reinterpret  the formulas for the families in   \cite{Afgoustidis15} and \cite{Qijun}  using our language. 
\subsubsection{Unitary principal series}
Keeping the notations  of previous sections recall that $P_s$ is the minimal parabolic of upper triangular matrices with its Langlands decomposition $M_sA_sN_s$.
For $\epsilon \in \{0,1\}$ and $\lambda \in i\R$   we denote by $\operatorname{sgn}^{\epsilon}\otimes e^{\lambda}\otimes1$  the unitary irreducible representation of  $B$,  given by  
\begin{eqnarray}\nonumber
 &&\left(\begin{matrix}
-1& 0\\
0& -1
\end{matrix}\right)\longmapsto (-1)^\epsilon,\hspace{2mm}\left(\begin{matrix}
e^a& 0\\
0& e^{-a}
\end{matrix}\right)\longmapsto e^{\lambda a},\hspace{2mm}\left(\begin{matrix}
1& n\\
0& 1
\end{matrix}\right)\longmapsto 1
\end{eqnarray}
Let $I(\epsilon,\lambda)$ be  the  unitarily induced representation $\operatorname{Ind}_{P_s}^{SL_2(\R)}\operatorname{sgn}^{\epsilon}\otimes e^{\lambda}\otimes1$. The action of the Casimir on $I(\epsilon,\lambda)$ is given by $\lambda^2-1$ and the   infinitesimal character  (an element of $\fh_s$) is given by $H_s\longmapsto \lambda -1$.   Let $I_0(\epsilon,\lambda)$ be the corresponding representation of $SO(2)\ltimes \R^2$. 
Starting with $I(\epsilon,\lambda)$  with $\lambda\neq 0$  the recipe of  \cite{Afgoustidis15} (which is very similar to that of  \cite{Qijun}) is to look on the family $I_t:=I(\epsilon,\frac{\lambda}{t})$ with $t>0$ and then to show that  in a certain sense $I_t$ converges to  $I_0(\epsilon,\lambda)$ as $t$ goes to zero. This construction seems a bit  ad hoc (one can try to  take other functions of $t$) and  the infinitesimal character of $I_t$ that is given by $\frac{\lambda}{t}-1$  as well as the action of the Casimir that is given by $\frac{\lambda^2}{r^2}-1$, blow up as we approach $0$. These divergences already seen in the work of Dooley and Rice \cite{Dooley83,Dooley85} where they contract $I(\epsilon,\frac{\lambda}{t})$ to  $I_0(\epsilon,\lambda)$.  From our perspective  the same family of representation arises, as a family in one of the  $\widetilde{\mathcal{M}}(\g,\K)_{m}$. It  is essentially unique and the blow up in $t$ is merely an artifact of using non-regularized sections that are less suitable to describe the family. To see this, observe that the    minimal $K$-types of $I(\epsilon,\lambda)$  belong to  $\{-1,0,1\}$ and  the corresponding theta  stable Cartan is $\fh_s$. Following the scheme we presented above for $\eta^{R=1}$, we only need to consider  algebraic families of $(\g,\K)$-modules on which $\Omega_0$ acts via multiplication by $c(r)=c_2r^2-1$ and as a result    $\psi:\mathcal{U}(\h_s)\longrightarrow O_{\X}$ must be given   by $\psi(H^s_0)=\sqrt{c_2} r$ see Porpositions \ref{p633} and \ref{p634}. Demanding that the canonically defined Casimir element of $\mathfrak{sl}_2(\R)$ act via $\lambda^2-1$ at  $r=1$   we must take $c_2=\lambda^2<0$, $\psi(H^s_0)=\sqrt{c_2} r=\pm \lambda$. Note that the choice of sign for the square root is irrelevant since  $I(\epsilon,\lambda)\simeq I(\epsilon,-\lambda)$. In our coordinate system the Cartan motion group is obtained at  $r=\infty$ which is the same as $R=0$, in $\cite{Afgoustidis15,Qijun}$ it is obtained at $t=0$. Furthermore, from the way the Lie brackets scales with the parameter we must have $r=R^{-1}=ct^{-1}$ for some nonzero real constant $c$. We shall choose $c=1$ which corresponds to  $\eta^{R=1}$, the other choices lead to all other  Mackey-Higson bijections.    
  Even if one restricts his attention only to the tempered duals  there are two main advantages for our approach. 
\begin{enumerate}
\item The action of the Casimir and the infinitesimal character are completely determined  by demanding that the family belongs to one of the $\widetilde{\mathcal{M}}(\g,\K)_{m}$ (with appropriate $m\in \{0,\pm1\}$ according to the minimal $K$-type). Where each $\widetilde{\mathcal{M}}(\g,\K)_{m}$ is defined in terms of some characterizing properties. 
\item  In terms of the coordinate $R$ on $\X_{\infty}$ the action of the Casimir and the infinitesimal character are given by
\begin{eqnarray}\nonumber
&& \Omega_{\infty}=R^2\Omega_0\longmapsto R^2c(R^-1)=\lambda^2-R^2=c_2-R^2\\ \nonumber
&&  \psi(H^s_{\infty})=\psi(RH^s_{0})=\pm\lambda=\sqrt{c_2}
\end{eqnarray}
and there are no divergences.
\end{enumerate}
In the case of $I(0,0)$ following the same logic as above the Casimir  $\Omega_0$ must act via $c(r)=c_2r^2-1$  and also must satisfy $c(r=0)=-1$ hence $c_2=0$ and we are naturally led to a  family of $(\g,\K)$-modules that is  constant  over $\X_0$.  Similarly for the two limit discrete series that compose $I(1,0)$ we are forced to consider families that are constant over $\X_0$.   
  
\subsubsection{Discrete series}
In the case of a discrete series of $\mathfrak{sl}_2(\R)$, following the same logic as above and using Proposition \ref{p632}  again we are forced to consider families of $(\g,\K)$-modules that are  constant  over $\X_0$. All of the above is consistent with \cite{Afgoustidis15,Qijun,Dooley83,Dooley85} but from our perspective the behavior of the Casimir and the infinitesimal charactr it is forced on us by requiring the family to belongs to to one of the $\widetilde{\mathcal{M}}(\g,\K)_{m}$.

\section{Appendix: Generalized Harish-Chandra homomorphism for most compact Cartans}

Recall the we assume that $\fg$ is complex semisimple having a Cartan decomposition $\fg=\fk\oplus \fp$ and  $[\fk,\fk]\subset \fk$,   $[\fk,\fp]\subset \fp$, $[\fp,\fp]\subset \fk$. We also assume that $\fh$ is a $\Theta$-stable Cartan subalgebra of $\fg$ with $\fh=\ft\oplus \fa$, $\ft=\fk\cap \fh$, $\fa=\fp\cap \fh$. We  define an increasing filtration $\mathcal{U}(\fg)_{\fk}^n$  of $\mathcal{U}(\fg)$ as follows. 
\begin{eqnarray}
&&\mathcal{U}(\fg)_{\fk}^n=\mathcal{U}(\fg)_n \mathcal{U}(\fk)=\mathcal{U}(\fk)\mathcal{U}(\fg)_n\label{423}
\end{eqnarray}
where  $\mathcal{U}(\fg)_n$ is the standard  Poincare-Birkhoff-Witt filtration. Note that the second equality in \ref{423} follows from the commutation relations of $\fk$ and $\fp$.  
Obviously $\mathcal{U}(\fg)_{\fk}^n\subset \mathcal{U}(\fg)_{\fk}^{n+1}$,  $\mathcal{U}(\fg)_{\fk}^n\mathcal{U}(\fg)_{\fk}^m \subset \mathcal{U}(\fg)_{\fk}^{n+m}$, and $[\mathcal{U}(\fg)_{\fk}^n, \mathcal{U}(\fg)_{\fk}^m]\subset \mathcal{U}(\fg)_{\fk}^{n+m-1}$. 
  The PBW theorem implies that  the filtration is exhaustive, that is $\mathcal{U}(\fg)=\bigcup_{n\geq 0}\mathcal{U}(\fg)_{\fk}^n$. Moreover the associated graded algebra is canonically isomorphic to the enveloping algebra of  $\fk\ltimes \fg/\fk$ which is isomorphic to the fiber of $\mathcal{U}(\g)$ at $R=0$.
For any $n$ we denote the projection  $\mathcal{U}(\fg) \longrightarrow \mathcal{U}(\fg)/\mathcal{U}(\fg)_{\fk}^n$   by $\pi_n$. The order of any $0\neq \xi\in \mathcal{U}(\fg)$, denoted by $o(\xi)$, is defined to be the largest $n\in \mathbb{N}$ such that $\pi_n(\xi)\neq 0$.  Note that $o(\xi+\eta)\leq \max(o(\xi ),o( \eta))$.
Similarly,  we can define an increasing filtration of $\mathcal{U}(\fh)$ with respect to $\ft$. Since  $\fh$ is $\Theta$-stable and $\ft\subset \fk$  then $\mathcal{U}(\fg)^n_{\fk} \cap U(\fh)=\mathcal{U}(\fh)^n_{\ft}$. 
Any basis of $\fg$  that consists of eigenvectors of $\Theta$, induces via 
PBW theorem a basis for  each $\mathcal{U}(\fg)^n_{\fk} $. This is not true for an arbitrary basis of $\fg$. Explicitly,  let $d_{\fa}$, $d_{\ft}$, $d_{\fk}$ and $d_{\fp}$ be the dimensions of $\fa,\ft,\fk$ and $\fp$ respectively.   We shall pick an ordered basis $\{k_i\}_{i=1}^{d_{\fk}} $ for $\fk$ and an ordered basis $\{p_i\}_{i=1}^{d_{\fp}} $ for $\fp$ such that $\{k_i\}_{i=1}^{d_{\ft}} $ is a basis for $\ft$ and $\{p_i\}_{i=1}^{d_{\fa}} $ a basis for $\fa$. The PBW theorem and the commutation relations of $\fp$ and $\fk$ imply that  monomials of the form $p_1^{i_1}p_2^{i_2}...p_{d_{\fp}}^{i_{d_{\fp}}}k_1^{j_1}k_2^{j_2}...k_{d_{\fk}}^{j_{d_{\fk}}}$ with $|i|:=i_1+...+i_{d_{\fp}}\leq n$  and no restrictions on the $j$'s form a basis for the complex vector space $\mathcal{U}(\fg)_{\fk}^n$. Similarly, monomials of the form $p_1^{i_1}p_2^{i_2}...p_{d_{\fa}}^{i_{d_{\fa}}}k_1^{j_1}k_2^{j_2}...k_{d_{\ft}}^{j_{d_{\ft}}}$ with $|i|:=i_1+...+i_{d_{\fa}}\leq n$  and no restrictions on the $j$'s form a basis for the complex vector space $\mathcal{U}(\fh)_{\ft}^n$. 
Note that $o(p_1^{i_1}p_2^{i_2}...p_{d_{\fp}}^{i_{d_{\fp}}}k_1^{j_1}k_2^{j_2}...k_{d_{\fk}}^{j_{d_{\fk}}})=|i|$ and the order of a linear combination of such different monomials is the maximum of the order of the monomials with a nonzero coefficient. 
We observe that a non-zero section $f(R)\otimes \xi$ of $\Gamma(\X_{\infty},\mathcal{U}(\fg)_{\text{const}})$ belongs to   $\Gamma(\X_{\infty},\mathcal{U}(\g))$ if and only if $f(R)\in \C[R]$ is divisible  by $R^{o(\xi)}$. 
\begin{proposition}
Let $\fh$ be a $\Theta$-stable Cartan of $\fg$ with maximal dimension of $\fh\cap \fk$. Then  $\widetilde{\gamma}_{\fh}(\mathcal{Z}(\g))\subset \mathcal{U}(\h)$.
\end{proposition}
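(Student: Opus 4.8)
\emph{Plan of proof.}
The plan is to localise at $\infty\in\X$ and then reduce to a purely Lie-algebraic statement about $\gamma_{\fh}$. Over $\X_{0}$ there is nothing to do: the family $\g$ is constant there, $\Gamma(\X_{0},\mathcal{Z}(\g))=\C[r]\otimes_{\C}\mathcal{Z}(\fg)$ and $\Gamma(\X_{0},\mathcal{U}(\h))=\C[r]\otimes_{\C}\mathcal{U}(\fh)$, and $\widetilde{\gamma}_{\fh}=\mathrm{id}\otimes\gamma_{\fh}$ visibly lands in $\mathcal{U}(\h)$. So it suffices to prove $\widetilde{\gamma}_{\fh}\bigl(\Gamma(\X_{\infty},\mathcal{Z}(\g))\bigr)\subseteq\Gamma(\X_{\infty},\mathcal{U}(\h))$.

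Next I would record the Rees-algebra descriptions over $\X_{\infty}$. From the $\Theta$-adapted PBW basis, the divisibility criterion recalled just above the Proposition, and the identification of the fibre at $R=0$ of $\mathcal{U}(\g)$ with $\operatorname{gr}_{\fk}\mathcal{U}(\fg)$, one gets $\Gamma(\X_{\infty},\mathcal{U}(\g))=\sum_{n\ge0}R^{n}\,\C[R]\otimes_{\C}\mathcal{U}(\fg)_{\fk}^{n}$ inside $\C[R]\otimes_{\C}\mathcal{U}(\fg)$, and likewise $\Gamma(\X_{\infty},\mathcal{U}(\h))=\sum_{n\ge0}R^{n}\,\C[R]\otimes_{\C}\mathcal{U}(\fh)_{\ft}^{n}$. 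Since forming the Rees module of a filtered vector space commutes with passing to a subspace carrying the induced filtration (choose a filtration-adapted basis of the subspace and extend), Lemma~\ref{le} together with the identity $\mathcal{U}(\fg)_{\fk}^{n}\cap\mathcal{U}(\fh)=\mathcal{U}(\fh)_{\ft}^{n}$ gives $\Gamma(\X_{\infty},\mathcal{Z}(\g))=\sum_{n\ge0}R^{n}\,\C[R]\otimes_{\C}\bigl(\mathcal{Z}(\fg)\cap\mathcal{U}(\fg)_{\fk}^{n}\bigr)$. Applying $\mathrm{id}\otimes\gamma_{\fh}$ summand by summand, the Proposition reduces to the \emph{core claim}: $\gamma_{\fh}\bigl(\mathcal{Z}(\fg)\cap\mathcal{U}(\fg)_{\fk}^{n}\bigr)\subseteq\mathcal{U}(\fh)_{\ft}^{n}$ for all $n$, equivalently $o(\gamma_{\fh}(\zeta))\le o(\zeta)$ for every $\zeta\in\mathcal{Z}(\fg)$ (the order on $\mathcal{U}(\fh)$ being the $\ft$-order, which coincides with the ambient $\fk$-order by the same intersection identity).

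To prove the core claim I would first strip off the $\rho$-shift: the algebra automorphism of $\mathcal{U}(\fh)$ sending $h$ to $h-\rho(h)$ fixes the leading $\fa$-degree part of any element, hence preserves the $\ft$-order, so it is enough to treat the unnormalised Harish-Chandra projection $\gamma_{\fh}^{0}:\mathcal{U}(\fg)^{\fh}\to\mathcal{U}(\fh)$ along $\overline{\fn}\mathcal{U}(\fg)+\mathcal{U}(\fg)\fn$. For a central $\zeta$ only weight-zero monomials (in a PBW basis adapted to $\fg=\overline{\fn}\oplus\fh\oplus\fn$) occur, and I would compute the top $\fk$-symbol of $\zeta$ in $\operatorname{gr}_{\fk}\mathcal{U}(\fg)\cong\mathcal{U}(\fk\ltimes\fg/\fk)$, monomial by monomial, writing each root vector as its $\fk$-part plus its $\fp$-part. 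Since $\operatorname{gr}_{\fk}\mathcal{U}(\fg)$ is again an enveloping algebra, hence an integral domain, the symbol of a monomial is the product of the symbols of its factors, and its $\fk$-degree equals the number of $\fa$-factors of its $\fh$-part plus the number of its non-compact-imaginary root factors. The role of maximal compactness is to block cancellation of the symbol of the Harish-Chandra term $H^{B}$ (the one of interest, with $\fa$-degree $n=o(\gamma_{\fh}^{0}(\zeta))$) against the symbols of the other monomials of $\zeta$: for a maximally compact $\fh$ there are no real roots, so every non-imaginary root is nonzero on $\ft$ and $\fp^{\ft}=\fh\cap\fp=\fa$; this forces the $S(\fp)$-factor of any symbol coming from a monomial that involves a non-imaginary element of $\overline{\fn}$ or $\fn$ into a nonzero $\ft$-weight space of $S(\fp)$, disjoint from $S(\fa)$, while the purely compact-imaginary monomials can be separated off using a positive system of $(\ft,\fk)$ compatible with $\Delta^{+}(\fh,\fg)$ — available exactly because, in the absence of real roots, a regular element of $\ft^{*}$ extends to a regular element of $\fh^{*}$ inducing the same signs on the imaginary roots — so that those monomials have vanishing $\mathcal{U}(\ft)$-component. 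Hence the top $\fk$-symbol of $\zeta$ is nonzero in degree at least $n$, giving $o(\zeta)\ge n=o(\gamma_{\fh}^{0}(\zeta))$.

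The hard part is precisely this last symbol computation in $\operatorname{gr}_{\fk}\mathcal{U}(\fg)$: the weight bookkeeping that rules out cancellation, and isolating that it is exactly the no-real-roots hypothesis (equivalently $\fp^{\ft}=\fa$ together with the existence of a compatible positive system) which makes it go through — this is also where a non-maximally-compact $\fh$ would genuinely be in danger. The three preliminary reductions — to $\X_{\infty}$, to the Rees description, and to the unnormalised projection — are formal.
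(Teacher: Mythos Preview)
Your formal reductions --- localising at $\X_\infty$, the Rees description, reducing to the inequality $o(\gamma_\fh(\zeta))\le o(\zeta)$, and stripping the $\rho$-shift --- are correct and coincide with the paper's. But your attack on the core inequality misses the key simplification and, as written, has a genuine gap.

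The paper's observation is this: since $\fh$ is maximally compact there are no real roots, and therefore (this is the citation to Vogan) one may choose the positive system so that $\theta\Delta^{+}=\Delta^{+}$; equivalently $\fn$ and $\overline{\fn}$ are themselves $\Theta$-stable. One can then pick an ordered basis of $\fg$ which is \emph{simultaneously} adapted to the triangular decomposition $\overline{\fn}\oplus\fh\oplus\fn$ \emph{and} consists of $\Theta$-eigenvectors. For any $\Theta$-adapted ordered basis the PBW monomials with at most $n$ factors from $\fp$ form a basis of $\mathcal{U}(\fg)_{\fk}^{n}$ (this is exactly the assertion recorded in the appendix just before the Proposition), so that $o\bigl(\sum c_{M}M\bigr)=\max\{o(M):c_{M}\neq0\}$. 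Since the unnormalised $\gamma_{\fh}^{0}$ merely discards those monomials not lying in $\mathcal{U}(\fh)$, the inequality is immediate. That is the whole proof.

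Your route instead keeps the root vectors $X_{\alpha}$ as the PBW basis for $\overline{\fn}$ and $\fn$ and then decomposes each one as $k_{\alpha}+p_{\alpha}$. For complex $\alpha$ these are \emph{not} $\Theta$-eigenvectors; indeed $p_{\alpha}=-p_{\theta\alpha}$ in $\fp$, so the top $\fk$-symbols of distinct PBW monomials can and do cancel. Concretely, the expansion $\zeta=\sum_{M}c_{M}M$ will in general contain monomials with $o(M)>n$, and your $\ft$-weight / $S(\fa)$-versus-$\fa^{\perp}$ argument addresses only the contributions of monomials with $o(M)=n$. It says nothing about the degree-$n$ residue of $\sum_{o(M)>n}c_{M}M$, which is a perfectly well-defined element of $\mathcal{U}(\fg)_{\fk}^{n}$ (being $\zeta-\sum_{o(M)\le n}c_{M}M$) and whose image in $\operatorname{gr}_{\fk}^{n}$ could in principle land in $S^{n}(\fa)\otimes\mathcal{U}(\ft)$ and cancel the $H^{B}$ term. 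So the non-cancellation claim is not established.

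The fix is precisely the $\Theta$-stable choice of $\fn$: once the basis vectors of $\fn$ and $\overline{\fn}$ are themselves in $\fk$ or $\fp$, the PBW monomials are already adapted to the $\fk$-filtration, there is no symbol cancellation at all, and the elaborate weight bookkeeping becomes unnecessary.
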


\begin{proof} Let $\fn$ and  $\overline{\fn}$ be  maximal nilpotent algebras as in section \ref{sec32}. By \cite[p.6]{Vogan79} we can assume that $\fn$ and  $\overline{\fn}$ are $\Theta$-stable.  Hence we can find ordered bases for $\fn$, $\overline{\fn}$, and $\fh$ which consists of vectors that are either in $\fk$ or in $\fp$.  We obtain an ordered  basis for $\fg$ by putting first the basis of  $\overline{\fn}$, then of $\fh$ and finally of $\fn$. By PBW theorem  we obtain a basis for $\mathcal{U}(\fg)$ that we shall denote by  $p_1^{i_1}p_2^{i_2}...p_{d_{\fp}}^{i_{d_{\fp}}}k_1^{j_1}k_2^{j_2}...k_{d_{\fk}}^{j_{d_{\fk}}}$ where all indeces take values in $\mathbb{N}_0$. Now it will be enough to show that for any $f(R)\otimes\xi \in \Gamma(\X_{\infty},\mathcal{Z}(\g))$ with  $\xi\in \mathcal{Z}(\fg)$ we have $\widetilde{\gamma_{\fh}}(f(R)\otimes\xi) \in \Gamma(\X_{\infty},\mathcal{U}(\h))$. Since $\widetilde{\gamma_{\fh}}(f(R)\otimes\xi)=f(R)\otimes\gamma_{\fh}(\xi) $ then it is enough to show that $o(\gamma_{\fh}(\xi))\leq o(\xi)$.   We expand $\xi$ in the above mentioned bases and write $\xi =\sum_{I,J} \alpha_{I,J}p_1^{i_1}p_2^{i_2}...p_{d_{\fp}}^{i_{d_{\fp}}}k_1^{j_1}k_2^{j_2}...k_{d_{\fk}}^{j_{d_{\fk}}}$  where we have used multi-index notation and $\alpha_{I,J}\in \C$.  Note that the affect of $\gamma_{\fh}$ on $\xi$ is to throw away all those monomials that do not lie inside $\mathcal{U}(\fh)$. Now $o(\xi)$  is the maximum value of $|I|$ that appear in the expression for $\xi$ in our bases and $o(\gamma_{\fh}(\xi))$ is the maximum value of $|I|$ that appear in the expression for $\gamma_{\fh}(\xi)$. Obviously  $o(\gamma_{\fh}(\xi))\leq o(\xi)$.
 \end{proof}

\bibliography{references}
\bibliographystyle{alpha}

\end{document}